\numberwithin{equation}{section}
\newtheorem{theorem}{Theorem}[section]
\newtheorem{corollary}[theorem]{Corollary}
\newtheorem{lemma}[theorem]{Lemma}
\newtheorem{prop}[theorem]{Proposition}
\newtheorem{deff}[theorem]{Definition}
\newtheorem{remark}[theorem]{Remark}
\theoremstyle{definition}
\newtheorem{example}[theorem]{Example}
\newcommand{\gen}{\operatorname{Gen}}
\newcommand{\lpl}{\left\langle}
\newcommand{\rer}{\right\rangle}
\renewcommand{\theta}{\vartheta}
\newcommand{\eps}{\varepsilon}
\newcommand{\mc}{\mathcal}
\newcommand{\mbf}{\mathbf}
\renewcommand{\l}{\left(}
\renewcommand{\r}{\right)}
\renewcommand{\lll}{\left\{}
\newcommand{\rrr}{\right\}}
\newcommand{\orr}{\operatorname{Orr}}
\newcommand{\ray}{\operatorname{Ray}}
\title{Full instability of boundary layers with the Navier boundary condition}
\date{}
\author{{Lorenzo Quarisa}\thanks{LQ is supported by an EU Chancellor's scholarship from the University of Warwick.\\
For the purpose of open access, the authors have applied a Creative Commons Attribution (CC-BY) license to Any Author Accepted Manuscript version arising from this submission.} \\
	Mathematics Institute\\
	University of Warwick\\
	Coventry CV47AL, United Kingdom \\
	\texttt{lorenzo.quarisa@warwick.ac.uk}  \\
	\And
{José L. Rodrigo} \\
	Mathematics Institute\\
	University of Warwick\\
	Coventry CV47AL, United Kingdom \\
	\texttt{j.rodrigo@warwick.ac.uk} \\
}
\begin{document}
   
 \today
    
\maketitle

\begin{abstract}
	 We consider the problem of the stability of the Navier-Stokes equations in $\mathbb{T}\times \mathbb{R}_+$ near shear flows which are linearly unstable for the Euler equation. In \cite{greniernguyen}, the authors prove an $L^{\infty}$ instability result for the no-slip boundary condition which also denies the validity of the Prandtl boundary layer expansion. In this paper, we generalise this result to a Navier slip boundary condition with viscosity dependent slip length: $\partial_y u =\nu^{-\gamma}u$ at $y=0$, where $\gamma >1/2$. This range includes the physical slip rate $\gamma=1$. 
\end{abstract}

\keywords{Boundary layers \and Prandtl expansion \and Inviscid limit \and Navier friction boundary condition \and Nonlinear instability.  }

\tableofcontents 

\section{Introduction}
We consider the following problem for the Navier-Stokes equations in $\mathbb{T}\times \mathbb{R}_+$. Let $\mbf{u}=(u(t,x,y),v(t,x,y))$ be a smooth flow. Then the Navier-Stokes equations with a viscosity-dependent Navier boundary condition and forcing $\mbf{f}$ may be written as
\begin{equation}\label{eq:ns}\begin{cases}
\partial_t \mbf{u}+(\mbf{u}\cdot \nabla)\mbf{u}+\nabla p = {\nu}\Delta \mbf{u}+\mbf{f} & \text{on }\mathbb{R}^2_+;\\ 
\nabla \cdot \mbf{u}=0 &\text{on } \mathbb{R}^2_+;\\ 
\partial_y u = \nu^{-\gamma} u & y=0;\\ 
v = 0 & y=0.\end{cases}
\end{equation}
Let $\mbf{U}_0 = (U_0(y),0)$ be a smooth shear flow (we will also refer to its first component $U_0$ as \emph{shear flow}). We can then consider the linearized Euler equations around $\mbf{U}_0$:
\begin{equation}\label{eq:eulinear}\begin{cases}
\partial_t \mbf{u}+(\mbf{u}\cdot \nabla)\mbf{U}_0 + (\mbf{U}_0\cdot \nabla ) \mbf{u}+\nabla p =0 & \text{on }\mathbb{R}^2_+;\\ 
\nabla \cdot \mbf{u}=0 &\text{on } \mathbb{R}^2_+;\\ 
v = 0 & y=0.\end{cases}
\end{equation}
The shear flow $\mbf{U}_0$ is called \emph{linearly unstable} for the Euler equations when \eqref{eq:eulinear} admits a solution which is exponentially growing in time.

With this boundary condition, the Prandtl expansion, which states that solutions to \eqref{eq:ns} may be approximated by the sum of an inviscid solution to the Euler equations and a \emph{boundary layer} term,
\begin{equation}\label{eq:expgen}\mbf{u}^{\nu}(t,x,y) \sim \mbf{u}^E(t,x,y)+ \nu^{\max\lll 1/2-\gamma;0\rrr} \mbf{u}^b \l t,x,\frac{y}{\sqrt{\nu}}\r,\qquad \text{ as }\nu\to 0^+, \end{equation}

is expected to display stronger stability properties. For instance, Iftimie and Sueur \cite{iftimie} proved that the above expansion is valid in energy spaces when $\gamma=0$ (viscosity-independent case). More recently in \cite{tao}, the authors exhibit an analogous result for $\gamma\in (0,1/2]$ with initial condition in the Gevrey class $(2\gamma)^{-1}$, which is simply the analytic class in the case $\gamma=1/2$.

The goal of this paper is to extend the nonlinear $L^{\infty}$ instability result by Grenier and Nguyen \cite{greniernguyen} near a shear flow $\mbf{U}_0$ which is linearly unstable for the Euler equations, to the case of the Navier boundary condition \eqref{eq:ns}$_3$-\eqref{eq:ns}$_4$. This leads to order one instability of the Prandtl boundary layer expansion for all $\gamma \geq 1/2$. This improves and extends the instability result by Paddick \cite{paddick} for $\gamma=1/2$, as well as the authors' previous result from \cite{prevpaper} for $\gamma \geq 1/2$. This range is particularly interesting as it includes the physically relevant case $\gamma=1$, which corresponds to the slip rate predicted by Navier \cite{navier}.

As the boundary condition depends on the viscosity through a power of $\nu$, we must consider a viscosity-dependent family of shear flows
\begin{equation}\label{eq:shflows}
\begin{cases}\partial_t U^{\nu}(t,y)=\partial_{yy}U^{\nu}(t,y) & t\geq 0,y\geq 0;\\
\partial_{y} U^{\nu}(t,0)=\nu^{1/2-\gamma} U^{\nu}(t,0) & t\geq 0;\\
U^{\nu}(0,y)= U_0(y) & y\geq 0.
\end{cases}
\end{equation}
Note that $y\mapsto \mbf{U}^{\nu}(t,y/\sqrt{\nu}):=(U^{\nu}(t,y/\sqrt{\nu}),0)$ then satisfies the Navier-Stokes equations \eqref{eq:ns} with $\mbf{f}=0$.

We will assume the following conditions on the linearly unstable shear flow $U_0$. It must be smooth, and for some $\eta_{\infty}>0$ it must satisfy:
\begin{align}
 \label{eq:uscond}  \begin{cases} |U_0^{(k)}(y)|\leq C_k e^{-\eta_{\infty}y}& k\geq 1;\\
U_0^{(k)}(0)=0 &k\geq 0;\\
 \sup_{y>0}\frac{y^k}{(1+y)^k}|U_0^{(k)}(y)|\leq C^{k+1} k!& k\geq 0, \text{for some }C>0.
\end{cases}
\end{align}
The second condition \eqref{eq:uscond}$_2$ is required in order for the map $\nu \mapsto U^{\nu}(\sqrt{\nu}t,y)-U_0(y)$ to be continuous in $H^s(\mathbb{R}_+)$ for $t>0$, and corresponds to satisfying the compatibility conditions at all orders at $(t,y)=(0,0)$. It is stronger than the assumption needed in the no-slip case, due to the dependence on the viscosity of the boundary condition (see the Appendix). The third condition \eqref{eq:uscond}$_3$ replaces analyticity, thanks to the weight $y^k(1+y)^{-k}$ which allows the flow to be non-analytic at the origin.

This is our main result.
\begin{theorem}\label{main}Let $\gamma>1/2$. For $\nu >0$, let $\mbf{U}^{\nu}=(U^{\nu},0)\in C^{\infty}(\mathbb{R}_+)$ be a family of shear flows constructed in \eqref{eq:shflows} and with $U_0$ linearly unstable for Euler and satisfying \eqref{eq:uscond}. Then for any $s\geq 0,N_0\in \mathbb{Z}_{\geq 1}$ there exists a family of solutions $\mbf{u}^{\nu}=(u^{\nu},v^{\nu})$ to \eqref{eq:ns}, with a forcing $\mbf{f}^{\nu}$ and a
constant $\sigma>0$ and times $\tilde{T}^{\nu}\searrow 0$ such that for all $\nu>0$,
\begin{align}\label{eq:main0}\|\mathbf{u}^{\nu}(0,x,y)-\mbf{U}_0\l y/\sqrt{\nu}\r\|_{H^s} &\leq \nu^{N_0};\\ \label{eq:forcings}
\|\mbf{f}^{\nu}\|_{L^{\infty}([0,T^{\nu}];H^s)}\leq \nu^{N_0}; \\  \label{eq:main1}
\|\mbf{u}^{\nu}(\tilde{T}^{\nu},x,y)-\mbf{U}^{\nu}(\tilde{T}^{\nu},y/\sqrt{\nu})\|_{L^{\infty}}&\geq \sigma.
\end{align}
\end{theorem}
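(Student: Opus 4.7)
The strategy is to adapt the nonlinear instability scheme of \cite{greniernguyen, prevpaper} to the $\nu$-dependent Navier boundary condition. First, I would pass to boundary-layer coordinates $(\tau, X, Y) := (t/\sqrt{\nu},\, x/\sqrt{\nu},\, y/\sqrt{\nu})$, under which \eqref{eq:ns} becomes Navier--Stokes with effective viscosity $\sqrt{\nu}$ on the half-plane, posed around the slowly-varying profile $U^{\nu}(\sqrt{\nu}\tau, Y)$ (which equals $U_0(Y)$ at $\tau = 0$), while the Navier condition \eqref{eq:ns}$_3$ transforms into the Robin condition $\partial_Y u = \nu^{1/2-\gamma} u$ at $Y = 0$. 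Because $\gamma > 1/2$, the coefficient $\nu^{1/2-\gamma}$ blows up as $\nu \to 0$, so at the boundary-layer scale this is asymptotically a no-slip condition with a vanishing slip-length correction of size $\nu^{\gamma-1/2}$.

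Second, I would construct an unstable eigenmode $\mbf{u}^{(1)}(X,Y)\, e^{\lambda_\nu \tau}$ of the rescaled linearized operator around $U_0(Y)$, with $\mathrm{Re}(\lambda_\nu) \to \lambda_0 > 0$ as $\nu \to 0$. This is an Orr--Sommerfeld problem at large effective Reynolds number $\nu^{-1/2}$ with the Robin boundary condition above: one matches a Rayleigh-unstable outer mode (provided by the linear Euler instability of $U_0$) with an inner viscous sublayer enforcing the boundary condition. For $\gamma > 1/2$ the slip-length correction is a lower-order perturbation, so the construction should be obtainable as a perturbation of the no-slip Orr--Sommerfeld theory used in \cite{greniernguyen, prevpaper}.

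Third, I would build an approximate solution $\mbf{u}_{\text{app}}^\nu = \mbf{U}^\nu + \sum_{k=1}^{M} \nu^{k N_1} e^{k \lambda_\nu \tau}\, \mbf{u}^{(k)}(X, Y)$, where $\mbf{u}^{(1)}$ is the eigenmode from Step 2 and each $\mbf{u}^{(k)}$ for $k \geq 2$ solves a linear inhomogeneous problem sourced by the lower-order nonlinear self-interactions, uniformly solvable because $k\lambda_\nu$ lies in the resolvent set of the linearized operator by Step 2. Setting $\mbf{u}^\nu := \mbf{u}_{\text{app}}^\nu$ and letting $\mbf{f}^\nu$ be the truncation residual gives an exact solution of \eqref{eq:ns} with forcing, and a direct computation bounds $\mbf{f}^\nu$ in high Sobolev norms by $\nu^{(M+1)N_1} e^{(M+1)\mathrm{Re}(\lambda_\nu)\tau}$ plus subleading contributions from $\mbf{U}^\nu - U_0$ and the slip correction.

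Finally, I would choose $\tilde{T}^\nu := \sqrt{\nu}\, N_1 \log(1/\nu) / \mathrm{Re}(\lambda_\nu) \searrow 0$, at which time the leading mode reaches $O(1)$ in $L^\infty$, yielding \eqref{eq:main1}. Taking $N_1$ and $M$ sufficiently large depending on $N_0$ and $s$, the bound \eqref{eq:main0} is immediate since the initial deviation is $O(\nu^{N_1})$, while \eqref{eq:forcings} follows on a subinterval $[0,T^\nu] \subseteq [0, \tilde{T}^\nu]$ on which the growing residual remains below $\nu^{N_0}$. The main obstacle is Step 2: rigorously producing the Orr--Sommerfeld unstable eigenmode with a uniform-in-$\nu$ spectral gap under the $\nu$-dependent Robin condition, verifying that the boundary sublayer analysis is stable across the full range $\gamma \in (1/2, \infty)$ and that the slip term does not introduce spurious unstable boundary modes capable of destroying the outer Rayleigh instability.
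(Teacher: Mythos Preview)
Your proposal reproduces the \emph{truncated} Grenier scheme of \cite{grenier, prevpaper}, but that is exactly the approach the present paper sets out to improve upon. The difficulty is in your Step~3/4 bookkeeping for the forcing. With a finite expansion $\sum_{k=1}^{M}$, the residual $\mbf{f}^\nu$ contains the quadratic interactions $\sum_{j+k>M}(\mbf{u}^{(j)}\cdot\nabla)\mbf{u}^{(k)}\,\nu^{(j+k)N_1}e^{(j+k)\lambda_\nu\tau}$, whose size is $\sim\big(\nu^{N_1}e^{\lambda_\nu\tau}\big)^{M+1}$. At the instability time $\tilde T^\nu$ you need $\nu^{N_1}e^{\lambda_\nu\tilde T^\nu}$ to be an $O(1)$ constant (say $c<1$) in order to secure \eqref{eq:main1}; hence the forcing at that time is $\sim c^{M+1}$, which is a fixed small constant but \emph{not} $\nu^{N_0}$. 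Your fix---restricting \eqref{eq:forcings} to a strict subinterval $[0,T^\nu]\subsetneq[0,\tilde T^\nu]$---is not what the theorem asserts: in the paper $T^\nu\ge\tilde T^\nu$, so the forcing must remain $\le\nu^{N_0}$ all the way to (and past) the instability time. To force $c^{M+1}\le\nu^{N_0}$ you would need $M\sim\log\nu^{-1}$, i.e.\ a $\nu$-dependent number of correctors, which is tantamount to summing the whole series with uniform-in-$n$ bounds.

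This is precisely why the paper does \emph{not} truncate in $n$: it constructs the full series $\sum_{n\ge1}\tau^n\mbf{u}^n$ and proves its convergence using the generator-function machinery of \cite{greniernguyen} (Section~\ref{gen}), exploiting the $1/n$ gain from $\Im c_{n,\alpha}\sim n$ in the Orr--Sommerfeld resolvent. The only sources of forcing are then (i) the frequency cutoff $|\alpha|\le\nu^{-1/4}$, which contributes $\tau^{\nu^{-1/4}}$, and (ii) the remainder from the Taylor expansion of the time-dependent background $U^\nu(\sqrt\nu t,\cdot)-U_0$; both are $O(\nu^{N_0})$ uniformly on $[0,T^\nu]$. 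You also place the main obstacle at Step~2 (existence of the Orr--Sommerfeld eigenmode under the Robin condition), but that part is already settled in \cite{prevpaper2} via Proposition~\ref{eigconv}; the genuine work here is the convergence of the infinite series and the boundedness of the pseudo-inverse and projector $P_{c,\nu}$ in the generator norms (Propositions~\ref{projbound}--\ref{genbound}).
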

The result only denies the validity of the Prandtl expansion \eqref{eq:expgen} for $\gamma> 1/2$, so there is no contradiction with \cite{iftimie}. It is an improvement to the main result from \cite{prevpaper}, where the instability in \eqref{eq:main1} was proven with an additional factor of $\nu^{\theta}$, for some $0\leq \theta\leq 1/4$ depending on the value of $\gamma$.

The proof of the result follows the same scheme as \cite{greniernguyen}. We start by applying the isotropic rescaling
\begin{equation}\label{eq:iso}(t,x,y)\mapsto \frac{1}{\sqrt{\nu}}(t,x,y),\end{equation}
where we keep the notation from the previous variables. As a result, \eqref{eq:ns} is transformed into (replacing $\sqrt{\nu}\mbf{f}$ with $\mbf{f}$ for simplicity of notation):
\begin{equation}\label{eq:ns2}\begin{cases}
\partial_t \mbf{u}+(\mbf{u}\cdot \nabla)\mbf{u}+\nabla p = {\sqrt{\nu}}\Delta \mbf{u}+\mbf{f} & \text{on }\mathbb{R}^2_+;\\ 
\nabla \cdot \mbf{u}=0 &\text{on } \mathbb{R}^2_+;\\ 
\partial_y u = \nu^{1/2-\gamma} u & y=0;\\ 
v = 0 & y=0.\end{cases}
\end{equation}

We construct the solutions $\mbf{u}^{\nu}$ to \eqref{eq:ns2} as a sum in the form 
\begin{equation}\label{eq:ansatz0}\mbf{u}^{\nu}(t,x,y)=\mbf{U}^{\nu}(\sqrt{\nu}t,y)+\sum_{n\geq 1}\nu^{nN_s}e^{n\Im c_{1} t} \mbf{u}^n(t,x,y),\qquad N_s=N_0+\max\lll 0;3s/4-7/8\rrr.\end{equation}
This construction implies that at time $t=0$, in the original coordinates we have $\|\mbf{u}^{\nu}(0,x,y)-\mbf{U}_0(y/\sqrt{\nu})\|_{H^s}\sim \nu^{N_0}$ (see Section \ref{instab} for the details, including the derivation of $N_s$). It follows the same principles of the construction used in \cite{prevpaper} and originally conceived in \cite{grenier}. In these papers, one truncates the series in \eqref{eq:ansatz0}, obtaining an approximate solution $\mbf{u}^{\textup{app}}$, considering the remaining terms as a forcing $\mbf{R}^{\textup{app}}.$ The forcing satisfies
\begin{equation}\label{eq:rapp}\|\mbf{R}^{\textup{app}}(t)\|_{H^s}\lesssim \nu^{\theta\l 1+ \frac{M+1}{2^{\Lambda}N_0}\r},\end{equation}
for $t\leq \tilde{T}^{\nu}$ and for some $\theta \in [0,1/4]$ depending only on $\gamma$ (see \cite{prevpaper} for the exact definition). Finally, using Gronwall's lemma one can show that there exists an exact solution $\mbf{u}^{\nu}$ with $\|\mbf{u}^{\nu}(t)-\mbf{u}^{\textup{app}}(t)\|_{H^s}\leq \nu^{\theta}$ for all $t\leq \tilde{T}^{\nu}$, which produces the instability. Whilst the instability produced by \autoref{main} acquires some forcings $\mbf{f}^{\nu}$, their scale is much smaller than \eqref{eq:rapp}, as expressed by \eqref{eq:forcings}. In particular, the forcings \eqref{eq:rapp} do not even vanish as $\nu\to 0^+$ at the time where the instability reaches order one. As a consequence, the result of this chapter is indeed stronger than \autoref{main}, despite the presence of forcing terms.\\

To achieve the improved instability result, instead of truncating the series and then bounding the difference with an exact solution using energy estimates, which causes a loss of instability order, we prove the convergence of the series by utilizing \emph{generator functions}. Generator functions, which were introduced in \cite{greniernguyen}, act as uniform space-time norms encompassing derivatives of all orders. In order to preserve the required boundary conditions, the terms $\mbf{u}^n$ are obtained by progressively solving \emph{Orr-Sommerfeld equations}. These arise from
the linearized Navier-Stokes equations around $\mbf{U}_0$:
\begin{equation}\label{eq:nslinear}\begin{cases}
\partial_t \mbf{u}+(\mbf{u}\cdot \nabla)\mbf{U}_0 + (\mbf{U}_0\cdot \nabla ) \mbf{u}+\nabla p = \sqrt{\nu}\Delta \mbf{u} & \text{on }\mathbb{R}^2_+;\\ 
\nabla \cdot \mbf{u}=0 &\text{on } \mathbb{R}^2_+;\\ 
\partial_y u =\nu^{1/2-\gamma} u & y=0;\\ 
v = 0 & y=0;\end{cases}
\end{equation}
when we assume that the solution is of the form
\begin{equation}\label{eq:red}(u,v)(t,x,y)=\nabla^{\perp}\l \phi(y)e^{i\alpha(x-ct)} \r,\end{equation}
where $\alpha\in \mathbb{Z}$ represents the (spatial) frequency, and $c\in \mathbb{C}$ represents the time dependence (which will be a growth if $\Im c >0$). 
In particular the first term, $\mbf{u}^1$, is obtained from a non-trivial solution of the Orr-Sommerfeld equation without forcing,
\begin{equation}\label{eq:orrs0}\begin{cases}(U_0-c)(\partial_{yy}-\alpha^2)\phi-U_0''\phi-\frac{\sqrt{\nu}}{i\alpha}(\partial_{yy}-\alpha^2)^2\phi=0; \\
\phi(0)=0;\\
\phi'(0)=\nu^{\gamma-1/2}\phi''(0).
\end{cases}\end{equation}

The existence of such a solution for certain values of $c$ with $\Im c>0$ near the so-called Rayleigh eigenvalues, is rigorously proven in our previous paper \cite{prevpaper}. In the aforementioned paper, we also posed the question of whether an Orr-Sommerfeld eigenmode, namely a non-trivial solution to \eqref{eq:orrs0}, belongs to the image. We derived an integral criterion for this property to hold, but were unable to reach a conclusion. As a result, we are going to assume that any Orr-Sommerfeld eigenmode does not belong to the image, in line with \cite{grenier3}. 

The paper is organized as follows. In Section \ref{orrsomm} we introduce the necessary background on the Rayleigh and Orr-Sommerfeld equation with the Navier boundary condition, which was developed in \cite{prevpaper2}. We also define the pseudo-inverse for the Orr-Sommerfeld operator together with the appropriate boundary condition, extending the theory built in \cite{greniernguyen}.  In Section \ref{gen} we recall the definition of generator functions from \cite{greniernguyen} and prove that the pseudo-inverse operator is bounded with respect to them. Finally, in Section \ref{instab} we construct the unstable family of solutions $\mbf{u}^{\nu}$ to \eqref{eq:ns} starting from an unstable solution to the Orr-Sommerfeld equation and prove our main result \autoref{main}. 

\section{The Orr-Sommerfeld equation: Green function and eigenvalues}\label{orrsomm}
 In this Section, we recall a few results from our previous paper \cite{prevpaper2} on the adjoint Orr-Sommerfeld equation. Whilst most of these results were only stated for the adjoint Orr-Sommerfeld operator, the treatment of the original operator is similar, and we can apply the same techniques. Furthermore, in \cite{prevpaper2} we required the adjoint operator to deduce some results which will be applied in this paper. We remind that in this paper the scaling of the viscosity is different due to the application of the isotropic rescaling \eqref{eq:iso}. As a result, in the momentum equation the viscosity appears as $\sqrt{\nu}$ rather than $\nu$, and in the boundary condition $\nu^{\gamma}$ is transformed into $\nu^{\gamma-1/2}$ (compare \eqref{eq:ns} to \eqref{eq:ns2}).

 For each frequency $\alpha\in \mathbb{Z}$, we introduce the following operators acting on one-variable functions with sufficient regularity:
 \begin{equation}\label{eq:alphaop}
     \nabla_{\alpha}:=(\partial_y,i\alpha ),\qquad \Delta_{\alpha}:=\partial_{yy}-\alpha^2=\nabla_{\alpha}\cdot \nabla_{\alpha}.
 \end{equation}
 Let $c\in \mathbb{C}$ with $\Im c >0$, and let $U_0$ be a smooth shear flow on $\mathbb{R}_+$ satisfying the decay assumption \eqref{eq:uscond}$_1$.
Consider the Rayleigh equation \cite{rayleigh}
\begin{equation}\label{eq:ray}
\begin{cases}\ray_{c}\phi:=(U_0-c)\Delta_{\alpha}\phi-U_0''\phi=0;\\
\phi(0)=0;\end{cases}
\end{equation}
on the domain 
$$ \mc{D}=\lll \phi\in H^2(\mathbb{R}_+;\mathbb{C}):\phi(0)=0\rrr,$$
though $\ray_c\phi$ makes sense for any $\phi$ with sufficient regularity. The Rayleigh equation is of course obtained from the linearized Euler equations \eqref{eq:eulinear} by applying the ansatz \eqref{eq:red}, so that $\alpha$ represents the spatial frequency and $c$ the time dependence.

For each value of $\alpha$ and $c$, the equation \eqref{eq:ray}$_1$ admits two \emph{fundamental solutions} $\phi_c^-$ and $\phi_c^+$, with
$$\partial_y^j\phi_c^{\pm}(y)\sim (\pm |\alpha|)^j e^{\pm |\alpha|y},\qquad y\to+\infty. $$

If we linearize the Navier-Stokes equations instead of Euler we obtain the Orr-Sommerfeld equation \cite{sommerfeld} coupled with the viscosity-dependent Navier boundary condition, which is given as
\begin{equation}\label{eq:orrs}\begin{cases}\orr_{c,\nu}\phi:= (U_0-c)\Delta_{\alpha}\phi-U_0''\phi-\frac{\sqrt{\nu}}{i\alpha}\Delta_{\alpha}^2\phi=0; \\
\phi(0)=0;\\
\phi'(0)=\nu^{\gamma-1/2}\phi''(0).
\end{cases}\end{equation}
We can define the above operator on the domain
\begin{equation}\label{eq:domain} \mc{D}_{\nu}:=\lll \phi \in H^{4}(\mathbb{R}_+;\mathbb{C}):\phi'(0)=0,\;\phi'(0)=\nu^{\gamma-1/2}\phi''(0)\rrr,\end{equation}
though of course $\orr_{c,\nu}\phi$ makes sense for any function with sufficient regularity. The Orr-Sommerfeld equation is obtained from the linearized Navier-Stokes equations when we apply the ansatz \eqref{eq:red}.
 To establish the necessary properties of the image of the Orr-Sommerfeld operator, it is useful to consider its adjoint, obtained through integration by parts:
\begin{equation}
    \orr^*_{c,\nu}\phi=(U_0-\bar{c})\Delta_{\alpha}\phi+2U_0'\phi'+\frac{\sqrt{\nu}}{i\alpha}\Delta_{\alpha}^2\phi,
\end{equation}
whose domain in $L^2(\mathbb{R}_+)$ contains $\mc{D}_{\nu}$. 
\begin{prop}\label{appsol}For each $N\geq 1$ there exist four linearly independent approximate solutions of the Orr-Sommerfeld equation $\phi^{s,\pm}_N$ and $\phi^{f,\pm}_N$ such that
\begin{equation}\label{eq:appfunsol}
   \partial_y^j \phi^{s,\pm}_N(y)\sim \partial_y^j \phi_{c}^{\pm}(y)+O(\sqrt{\nu}),\qquad \partial_y^j\phi^{f,\pm}_N(y)\sim \partial_y^je^{\pm\int\mu\,\mathrm{d}y}+O(\sqrt{\nu}),\qquad \nu \to 0^+,\;j\in \mathbb{N},
\end{equation}
where
\begin{equation}\label{eq:mu}
\mu=\mu(\alpha,c,\nu,y)=|\alpha|^{1/2}\nu^{-1/4}\sqrt{\alpha \sqrt{\nu}+i(U_0(y)-c)},
\end{equation}
satisfying
$$|\orr_{c,\nu}\phi^{k,\pm}_N(y)| \leq D_N\nu^{N+1}|\phi^{k,\pm}_N(y)|,\qquad y\in \mathbb{R}_+,k=s,f.$$
\end{prop}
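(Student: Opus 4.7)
The plan is to construct the four approximate solutions in two pairs: two \emph{slow} (outer) modes $\phi^{s,\pm}_N$ obtained as a regular asymptotic expansion in $\sqrt{\nu}$ around the Rayleigh fundamental solutions $\phi_c^{\pm}$, and two \emph{fast} (boundary-layer) modes $\phi^{f,\pm}_N$ obtained by a WKB expansion with phase $\pm\int\mu\,\mathrm{d}y$. The key structural fact is that $\orr_{c,\nu}$ is a singular perturbation of $\ray_c$: the viscous term $\frac{\sqrt{\nu}}{i\alpha}\Delta_{\alpha}^2$ raises the order of the ODE from two to four, so two of the linearly independent solutions must live on the thin boundary-layer scale $\nu^{1/4}$. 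Because $\Im c>0$ and $U_0$ is real, $U_0(y)-c$ never vanishes, so there are no turning points and $\mu$ defined in \eqref{eq:mu} is smooth and nowhere zero.

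For the slow modes, I would set
\begin{equation*}
\phi^{s,\pm}_N := \sum_{k=0}^{2N+1}\nu^{k/2}\psi_k^{s,\pm},\qquad \psi_0^{s,\pm}:=\phi_c^{\pm},
\end{equation*}
and plug into $\orr_{c,\nu}\phi=0$. Collecting powers of $\sqrt{\nu}$ yields a hierarchy of inhomogeneous Rayleigh equations
\begin{equation*}
\ray_c\psi_k^{s,\pm} = \tfrac{1}{i\alpha}\Delta_{\alpha}^2\psi_{k-1}^{s,\pm},\qquad k\geq 1,
\end{equation*}
each of which is solved using the Rayleigh Green function constructed in \cite{prevpaper2}, with the solution chosen so as to retain the asymptotic $\partial_y^j\psi_k^{s,\pm}=O(e^{\pm|\alpha|y})$ inherited from $\phi_c^{\pm}$. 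After truncation the residual is exactly $\nu^{N+1}\tfrac{1}{i\alpha}\Delta_{\alpha}^2\psi_{2N+1}^{s,\pm}$, which by construction is $O(\nu^{N+1}|\phi^{s,\pm}_N|)$.

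For the fast modes I would take the WKB ansatz $\phi^{f,\pm}_N = e^{\pm \int_0^y \mu(z)\,\mathrm{d}z}\, A_N^{\pm}(y)$. The choice of $\mu^2 = \alpha^2 + i\alpha(U_0-c)/\sqrt{\nu}$ is precisely what makes the dominant balance $(U_0-c)\Delta_{\alpha}\phi \sim \tfrac{\sqrt{\nu}}{i\alpha}\Delta_{\alpha}^2\phi$ cancel at leading order after applying $\Delta_{\alpha}$ to the exponential. Writing $A_N^{\pm}=\sum_{k=0}^{M}\nu^{k/4}a_k^{\pm}$ (with $M$ chosen so that the final residual is $O(\nu^{N+1})$) and substituting produces first-order transport equations of the form $\pm 2\mu(a_k^{\pm})' = F_k[\mu, a_0^{\pm},\dots,a_{k-1}^{\pm}]$ with $a_0^{\pm}\equiv 1$, which are solved by direct integration in $y$. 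Since $\Im\mu\neq 0$ and $\mu$ is smooth with controlled decay in $y$ (inherited from \eqref{eq:uscond}$_1$), all the $a_k^{\pm}$ are bounded and the exponential factors dominate the asymptotic behavior.

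The main obstacle is bookkeeping: one must track, uniformly in $y\in\mathbb{R}_+$, the size of the residual against $|\phi^{k,\pm}_N|$ rather than just in an integrated norm. The fast solutions involve derivatives of $\mu$ which are large powers of $\nu^{-1/4}$ but are compensated by factors of $\sqrt{\nu}$ coming from the viscous term; checking that the net power at the truncated order is indeed $\nu^{N+1}$ requires a careful induction on the order of the expansion. Linear independence of the four functions for small $\nu$ follows from the incompatibility of their asymptotic behaviors at $y\to\infty$: the slow modes grow/decay like $e^{\pm|\alpha|y}$, while the fast modes grow/decay like $e^{\pm\int\mu}$ with $|\Re\mu|\sim \nu^{-1/4}$. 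All these steps follow the template already developed for the adjoint operator in \cite{prevpaper2} and for the no-slip case in \cite{greniernguyen}, so the proof is primarily an adaptation of those constructions to $\orr_{c,\nu}$ with the present scaling of $\sqrt{\nu}$.
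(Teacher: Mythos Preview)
Your proposal is correct and follows exactly the standard route. Note, however, that the paper does not actually prove this proposition: it is stated without proof as a result recalled from \cite{prevpaper2} (for the adjoint operator) and \cite{grenier3} (for the no-slip case), with the remark that the same construction applies here. Your slow/fast decomposition via a regular $\sqrt{\nu}$-expansion around $\phi_c^{\pm}$ and a WKB ansatz with phase $\pm\int\mu$ is precisely the construction carried out in those references, so there is nothing to compare against beyond confirming that your sketch matches the cited methodology.
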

\begin{remark}\begin{enumerate}
\item Unfortunately, the constant $D_N$ may grow extremely quickly as $N\to \infty$, as it depends on higher order derivatives of $U_0$,
so we have been unable to obtain the convergence to exact solutions as $N\to \infty$.
\item The labels $s$ and $f$ for the approximate solutions constructed above refer to the \emph{slow} or \emph{fast} respectively behavior at infinity. Indeed when $\nu$ is small, $\mu$ will become much larger than $\alpha$.
 \end{enumerate}
 \end{remark}
 The same result of Proposition \ref{appsol} holds true for the adjoint Orr-Sommerfeld equation, except $\mu$ is replaced by its complex conjugate
 $$ \mu^*=|\alpha|^{1/2}\nu^{-1/4}\sqrt{\alpha\sqrt{\nu}-i(U_0(y)-\bar{c})}.$$
 The resulting approximate solutions are denoted as $\phi_N^{*,k,\pm}$, $k=s,f$, and they satisfy the same estimates. In fact, to first order approximation as $\nu\to 0$, we have
 $$\phi^{*,s,\pm}_N= \frac{ \bar{\phi}_N^{s,\pm}}{U_0-\bar{c}},\qquad \phi_N^{*,f,\pm}=\overline{\phi^{f,\pm}_N}.$$
  By adapting the procedure used in our previous paper \cite{prevpaper2} for the adjoint Orr-Sommerfeld operator, which is inspired from the work of Grenier and Nguyen \cite{grenier3}, we can construct an approximate interior Green function $G_{c,\nu}^I$ for the Orr-Sommerfeld equation in the form 
  $$G_{c,\nu,N}^{I}(x,y)=\begin{cases} a^{s,+}(x)\phi^{s,+}_N(y)+a^{f,+}(x)\phi^{f,+}_N(y) & y<x\\ 
  a^{s,-}(x)\phi^{s,-}_N(y)+a^{f,-}(x)\phi^{f,-}_N(y)
  & y>x.
\end{cases} $$
  As long as we do not require any boundary conditions for $G_{c,\nu}^I$, we can always  choose the coefficients $a^{s,\pm},a^{f,\pm}$
  so that
\begin{equation}\label{eq:green}\orr_{c,\nu}\int_0^{\infty}G_{c,\nu,N}^I(x,y)\psi(x)\,\mathrm{d}x=\psi(y)+O(D_N\nu^N),\end{equation}
for all $\psi \in C^3(\mathbb{R}_+)$ and such that the above integral converges for all $y\in \mathbb{R}_+$. Finally, we can correct $G_{c,\nu,N}^I$ to an \emph{exact} interior Green function $G_{c,\nu}^I$, by setting
$$G_{c,\nu}^I(x,y):=\sum_{n=0}^{\infty}\l \delta_{x=y}(x,y)\star -\orr_{c,\nu}G_{c,\nu,N}^I(x,y)\star\r^n G_{c,\nu,N}^I(x,y), $$
where the above series converges in $L^{\infty}$ with its first three derivatives in $y$ as long as $\nu$ is small enough. In particular, we obtain that $G_{c,\nu}^I$ is of class $C^2$ and with a locally bounded third derivative in $y$. Here, \emph{exact} means that $G_{c,\nu}^I$ satisfies \eqref{eq:green} without the $O(D_N\nu^N)$ term or any other remainder.

Furthermore, one can use $G_{c,\nu}^I$ to construct four \emph{exact} fundamental solutions $\phi^{s,\pm}_{c,\nu},\phi^{f,\pm}_{c,\nu}$ for the Orr-Sommerfeld equation, such that
$$ \phi^{k,\pm}_{c,\nu}=\phi^{k,\pm}_N+O(D_N\nu^N),\qquad \text{ as }\nu \to 0^+,\quad k=s,f.$$

\begin{deff} Fix $\alpha \in\mathbb{Z}\setminus \lll 0\rrr$.
    A pair $(c,\nu)$ is called an \emph{eigenvalue} for the Orr-Sommerfeld operator $\orr_{c,\nu}$ if and only if there exists $0\neq \phi_{c,\nu}\in \ker \orr_{c,\nu}\subset \mc{D}_{\nu}$. In this case, $\phi_{c,\nu}$ is called an \emph{eigenmode}. 
\end{deff}
Let us introduce the quantities
$$\mc{O}_{\gamma,c_0}(\nu):=\lim_{c\to c_0}\frac{\partial_y\phi^{s,-}_{c,\nu}(0)-\nu^{\gamma-1/2}\partial_{yy}\phi^{s,-}_{c,\nu}(0)}{\partial_y\phi^{f,-}_{c,\nu}(0)-\nu^{\gamma-1/2}\partial_{yy}\phi^{f,-}_{c,\nu}(0)},\qquad \mc{O}^*_{\gamma,c_0}(\nu):=\lim_{c\to c_0}\frac{\partial_y\phi^{*,s,-}_{c,\nu}(0)-\nu^{\gamma-1/2}\partial_{yy}\phi^{*,s,-}_{c,\nu}(0)}{\partial_y\phi^{*,f,-}_{c,\nu}(0)-\nu^{\gamma-1/2}\partial_{yy}\phi^{*,f,-}_{c,\nu}(0)}.$$
In \cite{prevpaper2} we have proven that these quantities are well-defined, even if $c_0$ is a Rayleigh eigenvalue, and continuous in $c$ and $\nu\geq 0$.
As $\nu\to 0^+$, they behave as follows. From \eqref{eq:appfunsol}, as well as Lemma 2.10 from \cite{prevpaper2} and the definition of $\mu$ we know that \begin{align*}\partial_y \phi^{s,-}_{c,\nu}(0)&\sim C_1\alpha,\quad \partial_{yy}\phi^{s,-}_{c,\nu}(0)\sim C_2(1+\alpha^2),\\
\partial_y \phi^{f,-}_{c,\nu}(0)&\sim -|\alpha|^{1/2}\nu^{-1/4}\sqrt{-ic}, \quad  \partial_{yy}\phi^{f,-}_{c,\nu}(0)\sim |\alpha|\nu^{-1/2}\sqrt{-ic},
\end{align*}
for some constants $C_1,C_2\in \mathbb{C}$, with $C_1 \neq 0$.
As a result,
\begin{equation}\label{eq:ogamma}\mc{O}_{\gamma,c_0}(\nu)\sim |\alpha|^{1/2}\nu^{1/4}\frac{C_1-C_2\alpha\nu^{\gamma-1/2}}{\sqrt{-ic}(-1-|\alpha|\nu^{\gamma-3/4})}\; \text{ as } \nu\to 0^+.\end{equation}
 The same estimate holds of course for $\mc{O}_{\gamma,c_0}^*(\nu)$, except with different values of the constants $C_1,C_2$ and $\sqrt{-ic}$ being replaced by $\sqrt{i\bar{c}}.$
 
 Now recall the following result, proven by the authors in \cite{prevpaper2}.  This will be a key ingredient in constructing the instability from the main result.
\begin{prop}\label{eigconv}A pair $(c,\nu)$ is an eigenvalue for the Orr-Sommerfeld operator if and only if it is an eigenvalue for the adjoint Orr-Sommerfeld operator. Moreover, if $c_0$ is a Rayleigh eigenvalue then there exists a $\kappa>0$ such that for $\nu>0$ small enough there exist $\kappa$ Orr-Sommerfeld eigenvalues $(c_{\nu}^1,\nu),\dots,(c_{\nu}^{\kappa},\nu)$ (counted with their multiplicity)  with
\begin{equation}|c_{\nu}^j-c_0|^{\kappa} \sim C|\mc{O}_{\gamma,c_0}(\nu)|,\qquad j=1,\dots, \kappa,\qquad \nu \to 0^+,
\end{equation}
for some $C>0$. In particular, if $\phi_{c_{\nu},\nu}$ denotes one such eigenmode, then it is of the form
\begin{equation}\label{eq:eigenexp}\phi_{c_{\nu},\nu}\sim \phi_{c_{\nu},\nu}^{s,-}+C\mc{O}_{\gamma,c_0}(\nu)\phi^{*,f,-}_{c_{\nu},\nu}\sim \phi_{c_{\nu}}^-+O(\mc{O}_{\gamma,c_0}(\nu))\sim \phi_{c_0}^-+O(\mc{O}_{\gamma,c_0}(\nu)),\end{equation}
whereas for the adjoint eigenmode $\phi^*_{c,\nu}$ we have
\begin{equation}\label{eq:eigenexpadj}\phi^*_{c_{\nu},\nu}\sim \phi_{c_{\nu},\nu}^{*,s,-}+C\mc{O}_{\gamma,c_0}(\nu)\phi^{*,f,-}_{c_{\nu},\nu}\sim \frac{\bar{\phi}_{c_{\nu}}^-}{U_0-\bar{c}_{\nu}}+O(\mc{O}_{\gamma,c_0}(\nu))\sim \frac{\bar{\phi}_{c_0}^-}{U_0-\bar{c}_{0}}+O(\mc{O}_{\gamma,c_0}(\nu)),\end{equation}
as $\nu\to 0^+$.
\end{prop}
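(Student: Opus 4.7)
The plan is to reduce the eigenvalue condition to a scalar Evans-type dispersion relation, and then apply Rouch\'e's theorem to count its zeros near the Rayleigh eigenvalue $c_0$. Since any $\phi \in \mc{D}_{\nu}$ must decay as $y\to\infty$, the kernel of $\orr_{c,\nu}$ lies in the two-dimensional span of the decaying exact fundamental solutions constructed via $G^I_{c,\nu}$, so I would write $\phi_{c,\nu} = a^s \phi^{s,-}_{c,\nu} + a^f \phi^{f,-}_{c,\nu}$. The two boundary conditions defining $\mc{D}_{\nu}$ then give a $2\times 2$ homogeneous linear system in $(a^s,a^f)$, and the vanishing of its determinant, after dividing by the $\phi^{f,-}_{c,\nu}$ column and invoking the defining limit of $\mc{O}_{\gamma,c_0}(\nu)$, takes the form of the dispersion relation
\[E(c,\nu) := \phi^{s,-}_{c,\nu}(0) - \mc{O}_{\gamma,c_0}(\nu)\,\phi^{f,-}_{c,\nu}(0) = 0.\]

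As $\nu \to 0^+$, Proposition \ref{appsol} and the exact-solution correspondence give $\phi^{s,-}_{c,\nu}(0) \to \phi_c^-(0)$, while $\mc{O}_{\gamma,c_0}(\nu) \to 0$ by \eqref{eq:ogamma}, so $E(c,\nu)$ converges to the Rayleigh Evans function $\phi_c^-(0)$. Since $c_0$ is a Rayleigh eigenvalue, analytic dependence of $\phi_c^-$ on the parameter produces an integer $\kappa \geq 1$ with $\phi_c^-(0) \sim C(c-c_0)^{\kappa}$ near $c_0$. On the circle $|c-c_0|=r_{\nu}$ with $r_{\nu}=K|\mc{O}_{\gamma,c_0}(\nu)|^{1/\kappa}$ and $K$ large, the leading term strictly dominates the perturbation $\mc{O}_{\gamma,c_0}(\nu)\phi^{f,-}_{c,\nu}(0)$, so Rouch\'e's theorem yields exactly $\kappa$ zeros of $E(\cdot,\nu)$ inside, counted with multiplicity; these are the eigenvalues $c^j_{\nu}$, and balancing the two terms at each zero yields the sharper estimate $|c^j_{\nu}-c_0|^{\kappa} \sim C'|\mc{O}_{\gamma,c_0}(\nu)|$. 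The expansion \eqref{eq:eigenexp} then follows by solving the $2\times 2$ system at the eigenvalue: normalizing $a^s = 1$, the first boundary condition gives $a^f = -\phi^{s,-}_{c_{\nu},\nu}(0)/\phi^{f,-}_{c_{\nu},\nu}(0) = O(\mc{O}_{\gamma,c_0}(\nu))$, while the successive asymptotics $\phi^{s,-}_{c,\nu} = \phi_c^- + O(\sqrt{\nu})$ and $\phi_{c_{\nu}}^- = \phi_{c_0}^- + O(c_{\nu}-c_0)$ complete the chain.

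For Part 1, the equivalence of Orr-Sommerfeld and adjoint Orr-Sommerfeld eigenvalues follows from the fact that both operators are Fredholm of index zero on their respective domains for $\nu>0$ (the fourth-order part $\sqrt{\nu}\Delta_{\alpha}^2/(i\alpha)$ controls the lower-order terms), so $\dim \ker \orr_{c,\nu} = \dim \ker \orr^*_{c,\nu}$; the adjoint expansion \eqref{eq:eigenexpadj} then follows from the same dispersion-relation argument applied with the approximate adjoint solutions $\phi^{*,k,\pm}_N$ and $\mc{O}^*_{\gamma,c_0}(\nu)$. The main technical obstacle is verifying that $E(c,\nu)$ is holomorphic in $c$ with the correct leading-order behavior uniformly in small $\nu$, so that Rouch\'e's theorem applies cleanly; this reduces to tracking the analytic $c$-dependence through the iterative construction $\phi^{k,\pm}_{c,\nu} = \phi^{k,\pm}_N + O(D_N\nu^N)$ and through the interior Green function $G^I_{c,\nu}$, with particular care at $c = c_0$ where the Rayleigh denominator $U_0 - c$ is singular at the critical layer and the inviscid profiles $\phi_c^{\pm}$ fail to be smooth in $c$.
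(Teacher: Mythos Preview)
The paper does not prove Proposition~\ref{eigconv} at all: it is stated as ``recall the following result, proven by the authors in \cite{prevpaper2}'', with no argument given here. So there is nothing in the present paper to compare your proposal against line by line.

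That said, your Evans-function plus Rouch\'e outline is the standard route and is almost certainly what appears in \cite{prevpaper2}; indeed the $2\times 2$ matrix $E(c,\nu)$ you set up is exactly the one the paper writes down in Section~\ref{pseudo}, and the expansion you derive for $a^f$ matches \eqref{eq:eigenexp2}. One small correction: your dispersion relation should carry the $c$-dependent ratio $\mc{O}_{\gamma,c}(\nu)$ rather than the limiting value $\mc{O}_{\gamma,c_0}(\nu)$; the paper states that this ratio is continuous in $c$, so the Rouch\'e argument still goes through, but the distinction matters when you balance terms to extract the sharp estimate $|c^j_{\nu}-c_0|^{\kappa}\sim C|\mc{O}_{\gamma,c_0}(\nu)|$.

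One genuine misconception in your final caveat: you flag ``the Rayleigh denominator $U_0-c$ is singular at the critical layer'' at $c=c_0$ as the main technical obstacle. But $c_0$ is an \emph{unstable} Rayleigh eigenvalue, so $\Im c_0>0$ and $U_0(y)-c_0$ is bounded away from zero for all real $y$; there is no critical layer here. The analytic dependence of $\phi_c^-$ on $c$ near $c_0$ is therefore unproblematic, and this is precisely why the paper can assert continuity of $\mc{O}_{\gamma,c_0}(\nu)$ without further comment. The actual technical work in \cite{prevpaper2} lies in controlling the iterative construction of the exact fundamental solutions uniformly in $\nu$, which you correctly identify.
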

Let $(c,\nu)$ be an Orr-Sommerfeld eigenvalue, and let $\phi_{c,\nu}$ and $\phi^*_{c,\nu}$ be an Orr-Sommerfeld eigenmode and adjoint Orr-Sommerfeld eigenmode respectively. We know that the subspace of $L^2(\mathbb{R_+})$ generated by $\phi^*_{c,\nu}$ is $\lpl \phi^*_{c,\nu}\rer= \ker \orr^*_{c,\nu}\subset \Im \orr_{c,\nu}^{\perp}$. Therefore $\Im \orr_{c,\nu}$ has at least codimension one in $L^2(\mathbb{R}_+)$, and hence the Orr-Sommerfeld operator is not surjective. Moreover, we also have
$$\phi^*_{c,\nu}\notin (\ker \orr^*_{c,\nu})^{\perp}\implies \phi^*_{c,\nu}\notin \Im \orr_{c,\nu}.$$
In other words, an adjoint Orr-Sommerfeld eigenmode cannot belong to the image of the Orr-Sommerfeld operator. Unfortunately, this is not enough to conclude the same result for an Orr-Sommerfeld eigenmode.
 \subsection{Pseudo-inverse of the Orr-Sommerfeld operator}\label{pseudo}

Let us fix $\alpha \in \mathbb{Z}\setminus \lll 0\rrr$ for the remainder of this section.  If $(c,\nu)$ is not an Orr-Sommerfeld eigenvalue, namely if the determinant of the matrix
 $$E(c,\nu):=\begin{pmatrix} \phi^{s,-}_{c,\nu}(0) & \phi^{f,-}_{c,\nu}(0) \\
 (\partial_y-\nu^{1/2-\gamma}\partial_{yy})\phi^{s,-}_{c,\nu}(0)& (\partial_y - \nu^{1/2-\gamma})\phi^{f,-}_{c,\nu}(0)
 \end{pmatrix}$$
 is non-zero, then by adding a linear combination of $\phi^{s,-}_{c,\nu}$ and $\phi^{f,-}_{c,\nu}$ to the exact interior Green function $G_{c,\nu}^I$ and obtain an exact Green function $G_{c,\nu}$ which satisfies the required boundary conditions \eqref{eq:orrs}$_{2,3}$ as well as \eqref{eq:green}. It also satisfies the following bounds:

 \begin{equation}
     \frac{|\partial_y^j G_{c,\nu}(x,y)|}{1+\det E(c,\nu)^{-1}}\leq \frac{C}{1+|\Im c|}\l |\alpha|^{j-1}e^{-\theta_0|\alpha(x-y)|}+(\mu)^{j-1}e^{-\theta_0\left|\int_x^y \mu\right|}\r,\qquad j=0,1,2,3.
 \end{equation}

Of course, the above estimate only holds for $j\leq 3$ as $\tilde{G}_{c,\nu}$ is only of class $C^2$, with a piecewise continuous third derivative. In particular, for all $p\in [1,\infty]$ there exists a constant $C(p)>0$ independent from $\nu$ such that
\begin{equation}\label{eq:l1bound0}\int_0^{\infty} |\partial_y^jG_{c,\nu}(x,y)|^p\,\mathrm{d}x\leq C(p)\l 1+E(c,\nu)^{-1}\r\l |\alpha|^{j-2}+\mu^{j-2}\r,\qquad j=0,1,2,3. \end{equation}
 While it is not possible to obtain a Green function satisfying the boundary condition for each pair $(c,\nu)$, namely for those for which $\det E(c,\nu)=0$, we can always fix one of the two conditions and allow the other condition to restrict the image of the operator to a subspace of codimension one. Regardless of the choice, one obtains that $\Im \orr_{c,\nu}$ is the same subspace of codimension at most one. On the other hand, since $\Im \orr_{c,\nu}\subset (\ker \orr^*_{c,\nu})^{\perp}$, and since eigenvalues and adjoint eigenvalues are the same, one deduces that the codimension of $\Im \orr_{c,\nu}$ is exactly one.

Recalling that $\phi_{c,\nu}^{f,-}(0)\neq 0$ for all $(c,\nu)$, we can then define
\begin{equation}
    \tilde{G}^b_{c,\nu}(x,y):= -\frac{G_{c,\nu}^I(x,0)}{\phi^{f,-}_{c,\nu}(0)}\phi^{f,-}_{c,\nu}(y),
\end{equation}
so that the \emph{pseudo-Green function} Let $\tilde{G}_{c,\nu}:=G^I_{c,\nu}+\tilde{G}^b_{c,\nu}$ is a Green function for the Orr-Sommerfeld operator, satisfying the boundary condition $\tilde{G}_{c,\nu}(x,0)=0$ for all $x>0$. In our previous paper \cite{prevpaper2}, we have proven the following properties.
\begin{prop}The pseudo-Green function $\tilde{G}_{c,\nu}$ satisfies the same estimates as the Green function $G_{c,\nu}^I$: for all $\theta_0\in(0,1)$ there exists $C>0$ independent from $\nu$, such that for all $x,y>0$ we have
\begin{equation}\label{eq:pseudobound}
|\partial_y^j\tilde{G}_{c,\nu}(x,y)|\leq \frac{C}{1+|\Im c|}\l |\alpha|^{j-1}e^{-\theta_0|\alpha(x-y)|}+(\mu)^{j-1}e^{-\theta_0\left|\int_x^y \mu\right|}\r, \qquad j=0,1,2,3.
\end{equation}
In particular, for all $p\in [1,\infty]$ there exists a constant $C(p)>0$ independent from $\nu$ such that
\begin{equation}\label{eq:l1bound}\int_0^{\infty} |\partial_y^j\tilde{G}_{c,\nu}(x,y)|^p\,\mathrm{d}x\leq C(p)\l |\alpha|^{j-2}+\mu^{j-2}\r,\qquad j=0,1,2,3. \end{equation}
\end{prop}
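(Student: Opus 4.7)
The plan is to exploit the decomposition $\tilde{G}_{c,\nu} = G^I_{c,\nu} + \tilde{G}^b_{c,\nu}$. Since the interior Green function $G^I_{c,\nu}$ already satisfies an estimate of the stated form (as shown in \cite{prevpaper2}), the actual content is a bound on the boundary correction
$$\tilde{G}^b_{c,\nu}(x,y) = -\frac{G^I_{c,\nu}(x,0)}{\phi^{f,-}_{c,\nu}(0)}\,\phi^{f,-}_{c,\nu}(y),$$
which has a rank-one product structure that I would attack by estimating each factor separately. The factor $G^I_{c,\nu}(x,0)$ is controlled by specializing the known interior bound to $y=0$ and $j=0$. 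The denominator $\phi^{f,-}_{c,\nu}(0)$ is bounded below uniformly for small $\nu$, via the approximate-solution asymptotics from Proposition \ref{appsol} together with the exact-solution correction $\phi^{f,-}_{c,\nu} = \phi^{f,-}_N + O(D_N \nu^N)$. Finally, $\partial_y^j \phi^{f,-}_{c,\nu}(y)$ is bounded by $C|\mu(y)|^j e^{-\theta_0 \int_0^y \Re \mu}$, again from Proposition \ref{appsol}.

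Combining the factor bounds produces two exponential regimes, matching the two terms in the estimate for $G^I_{c,\nu}(x,0)$. The $\mu$-regime contributes
$$|\mu|^{j-1} e^{-\theta_0(\int_0^x |\mu| + \int_0^y |\mu|)},$$
which embeds directly in the second target term because $\int_0^x |\mu| + \int_0^y |\mu| \geq \left|\int_x^y \mu\right|$ by the triangle inequality on the integration path. The $\alpha$-regime is more delicate: the product produces a factor of $|\mu(y)|^j$ where only $|\alpha|^{j-1}$ is allowed on the right-hand side. I would handle this with a case split in $y$: when $y$ is so small that $|\mu(y)|$ is comparable to $|\alpha|$, the bound matches using $|\alpha|(x+y)\geq|\alpha(x-y)|$; when $y$ is larger, one absorbs a factor of $|\mu|/|\alpha|$ into $e^{-(\theta_0/2)\int_0^y |\mu|}$ at a small cost in the exponent, so that the resulting bound falls back into the $\mu$-regime.

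Finally, the $L^p$ estimate \eqref{eq:l1bound} follows from the pointwise bound by integrating in $x$: the $\alpha$-term integrates to $|\alpha|^{-1}$, and via a change of variables $s=\int_0^x \mu$ (with $|\mu|$ bounded below uniformly) the $\mu$-term integrates to $\mu^{-1}$; raising to the power $p$ only modifies the constants.

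The main obstacle is the reorganization of exponentials in the $\alpha$-regime: the product of decay in $x$ inherited from $G^I_{c,\nu}(x,0)$ and decay in $y$ inherited from $\phi^{f,-}_{c,\nu}$ must be repackaged as decay in $|x-y|$ or $\left|\int_x^y \mu\right|$, which forces the case split sketched above and relies on the explicit form \eqref{eq:mu} of $\mu$ combined with the decay assumption \eqref{eq:uscond}$_1$ on $U_0$.
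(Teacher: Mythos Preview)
The paper does not supply a proof here; it only refers to \cite{prevpaper2}. Your decomposition and the rank-one treatment of $\tilde G^b_{c,\nu}$ are the natural route, and the $\mu$-regime closes exactly as you say via the triangle inequality $\int_0^x|\mu|+\int_0^y|\mu|\ge\left|\int_x^y\mu\right|$.

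The gap is in your handling of the $\alpha$-regime. The first branch of your case split, ``$y$ so small that $|\mu(y)|$ is comparable to $|\alpha|$'', is empty: by \eqref{eq:mu} and the boundedness of $U_0$, one has $|\mu(y)|\sim|\alpha|^{1/2}\nu^{-1/4}$ uniformly in $y$, so under the standing constraint $|\alpha|\le\nu^{-1/4}$ the ratio $|\mu|/|\alpha|$ is at least of order $\nu^{-1/8}$ for \emph{every} $y$. Your ``large $y$'' branch only kicks in once $\int_0^y|\mu|\gtrsim\log(|\mu|/|\alpha|)$, leaving the strip $0\le y\lesssim|\mu|^{-1}\log(|\mu|/|\alpha|)$ uncovered. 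At $y=0$ the slow part of the product contributes, for $j\ge1$,
\[
\frac{1}{1+|\Im c|}\,\frac{|\mu|^j}{|\alpha|}\,e^{-\theta_0|\alpha|x},
\]
and neither $|\alpha|^{j-1}e^{-\theta_0|\alpha|x}$ nor $|\mu|^{j-1}e^{-\theta_0|\mu|x}$ dominates this uniformly in $\nu$ (take $x=0$: the prefactor $|\mu|^j/|\alpha|$ is not controlled by $|\alpha|^{j-1}+|\mu|^{j-1}$). This is not a packaging issue; to close \eqref{eq:pseudobound} for $j\ge1$ you need further input---either a sharper bound on the slow component of $G^I_{c,\nu}(x,0)$ than the generic $|\alpha|^{-1}e^{-\theta_0|\alpha|x}$, or a slightly weakened form of the pointwise estimate that the downstream generator-function arguments can still absorb. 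Your proposal supplies neither.
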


In line with \cite{grenier3}, we are going to require the assumption that $c_0$ is a \emph{simple} eigenvalue of Rayleigh, in the sense that if $\phi_{c_0}^-$ is the fundamental decaying solution of $\ray_{c_0}$, then 
 \begin{equation*}
     \phi_{c_0}^-\notin \Im \ray_{c_0}.
 \end{equation*}
 This is equivalent to requiring that
$$ \ker \ray^2_{c_0}=\ker \ray_{c_0}.$$
As argued in our paper \cite{prevpaper2}, this also implies that for $\nu$ small enough we have
\begin{equation}
    \phi_{c_{\nu},\nu}\notin \Im \orr_{c_{\nu},\nu},
\end{equation}
where $\phi_{c_{\nu},\nu}$ is the eigenmode corresponding to an Orr-Sommerfeld eigenvalue $(c_{\nu},\nu)$ with $c_{\nu}\to c_0$ as $\nu\to 0$. Under the above condition, we can conclude the following.
\begin{prop}\label{orrim}Let $(c,\nu)$ be an Orr-Sommerfeld eigenvalue, let $\tilde{G}_{c,\nu}$ be the pseudo-Green function, and let $\psi \in C^{3}\cap L^2(\mathbb{R}_+)$. Then $\psi \in \Im \orr_{c,\nu}$, with preimage $$\phi(y):=\int_0^{\infty}\tilde{G}_{c,\nu}(x,y)\psi(x)\,\mathrm{d}x.$$
if and only if:
\begin{equation}\label{eq:orthogorr}\int_0^{\infty}(\partial_y-\nu^{\gamma-1/2}\partial_{yy})\tilde{G}_{c,\nu}(x,0)\psi(x)\,\mathrm{d}x=0.
\end{equation}

In particular, $\Im \orr_{c,\nu}$ has codimension one. 
\end{prop}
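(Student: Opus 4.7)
The approach is to take the candidate preimage $\phi(y) := \int_0^\infty \tilde{G}_{c,\nu}(x,y)\psi(x)\,\mathrm{d}x$ supplied by the pseudo-Green function and verify one by one the conditions for $\phi\in \mc{D}_\nu$ with $\orr_{c,\nu}\phi=\psi$, observing which of them are automatic and which reduce to the orthogonality condition \eqref{eq:orthogorr}. The codimension claim then drops out since \eqref{eq:orthogorr} is a single linear constraint on $\psi$ and its non-triviality is guaranteed by the existence of an adjoint eigenmode.

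For the sufficiency direction, assume \eqref{eq:orthogorr}. Regularity and decay of $\phi$ come from the pointwise bound \eqref{eq:pseudobound} together with $\psi\in C^3_b\cap L^2$: the integral estimate \eqref{eq:l1bound} yields $\phi$ and its first three $y$-derivatives in $L^\infty$ with exponential decay, and the fourth derivative is recovered algebraically from $\orr_{c,\nu}\phi=\psi$ once that identity is established. The equation $\orr_{c,\nu}\phi=\psi$ itself is the Green-function property of $\tilde{G}_{c,\nu}$; the Dirichlet condition $\phi(0)=0$ is automatic from the defining property $\tilde{G}_{c,\nu}(x,0)\equiv 0$; and the Navier condition $(\partial_y-\nu^{\gamma-1/2}\partial_{yy})\phi(0)=0$ is obtained by differentiating under the integral (justified by \eqref{eq:pseudobound}) and is precisely \eqref{eq:orthogorr}.

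For the necessity direction, let $\phi_0\in \mc{D}_\nu$ with $\orr_{c,\nu}\phi_0=\psi$, and set $\phi_1(y) := \int_0^\infty \tilde{G}_{c,\nu}(x,y)\psi(x)\,\mathrm{d}x$. Then $\phi_0-\phi_1$ lies in $\ker \orr_{c,\nu}$ and decays at infinity, so $\phi_0 - \phi_1 = A\phi^{s,-}_{c,\nu} + B\phi^{f,-}_{c,\nu}$ for some $A,B\in\mathbb{C}$. Enforcing the two boundary conditions on $\phi_0$ and using $\phi_1(0)=0$ produces the linear system
\[
E(c,\nu)\begin{pmatrix}A\\B\end{pmatrix} = \begin{pmatrix}0 \\ -(\partial_y-\nu^{\gamma-1/2}\partial_{yy})\phi_1(0)\end{pmatrix}.
\]
Since $(c,\nu)$ is an Orr-Sommerfeld eigenvalue we have $\det E(c,\nu)=0$, and by the Fredholm alternative the right-hand side must be annihilated by any generator of $\ker E(c,\nu)^T$. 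A direct computation of this generator, using the key fact $\phi^{f,-}_{c,\nu}(0)\neq 0$ recalled earlier, shows its second component is nonzero; hence solvability forces $(\partial_y-\nu^{\gamma-1/2}\partial_{yy})\phi_1(0)=0$, i.e.\ \eqref{eq:orthogorr}.

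The codimension statement is then immediate: \eqref{eq:orthogorr} is a single linear constraint, so $\Im\orr_{c,\nu}$ has codimension at most one, while the already-established inclusion $\Im\orr_{c,\nu}\subset(\ker\orr^*_{c,\nu})^\perp$ together with the existence of an adjoint eigenmode forces codimension at least one. The step I expect to be trickiest is the $2\times 2$ Fredholm computation: one must check that $E(c,\nu)$ is genuinely a rank-one (not rank-zero) matrix and that its cokernel generator has a nonzero entry in the coordinate paired with the Navier boundary datum. Both of these facts come down to $\phi^{f,-}_{c,\nu}(0)\neq 0$, which is precisely where the specific structure of the Navier boundary condition — as opposed to a purely Dirichlet one — enters decisively into the argument.
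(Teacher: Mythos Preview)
Your proposal is correct and follows essentially the same line as the paper. In the paper this proposition is stated without a formal proof block; its justification is the surrounding discussion of the pseudo-Green function. The paper's route is: sufficiency is immediate from the construction of $\tilde G_{c,\nu}$ (the Dirichlet condition is built in, so the remaining Navier condition becomes precisely \eqref{eq:orthogorr}), whence $\Im\orr_{c,\nu}$ has codimension at most one; the inclusion $\Im\orr_{c,\nu}\subset(\ker\orr^*_{c,\nu})^\perp$ together with the existence of an adjoint eigenmode then forces the codimension to be exactly one, and necessity follows because a codimension-one subspace contained in $\Im\orr_{c,\nu}$ must coincide with it. Your argument differs only in that you prove necessity directly, via the $2\times 2$ Fredholm alternative on $E(c,\nu)$, before invoking the adjoint for the codimension count. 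This is a slightly more constructive variant but not a genuinely different approach; both routes rest on $\phi^{f,-}_{c,\nu}(0)\neq 0$ at the decisive step, exactly as you identify.
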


\textbf{Notation}. We will use the notation $A(\nu) \propto B(\nu)$, where $A(\nu)$ to $B(\nu)$ are some complex quantities to mean that $A(\nu) \sim C B(\nu)$ as $\nu\to 0^+$, where $C=C(\alpha,c,\nu)\in \mathbb{C}$ is continuous and bounded away from zero in $\alpha \in \mathbb{Z}$, $c\in \mathbb{C}$ and $\nu >0$.

\begin{remark}
  Let  $(c,\nu)$ be an Orr-Sommerfeld eigenvalue. Then we know that $E(c,\nu)=0$, so the eigenmode is of the form
   \begin{equation}\label{eq:eigenexp2} \phi_{c,\nu}\propto \phi_{c,\nu}^{s,-}+ \frac{(\partial_y- \nu^{1/2-\gamma})\phi_{c,\nu}^{s,-}(0)}{(\partial_y-\nu^{1/2-\gamma})\phi_{c,\nu}^{f,-}(0)}\phi_{c,\nu}^{f,-}\sim \phi_c^-+O\l \mc{O}_{\gamma,c}(\nu)\r,\qquad \nu \to 0^+,\end{equation}
   where $\phi_c^-$ is the Rayleigh decaying solution.
    The above expression may be compared to \eqref{eq:eigenexp}, where we take the limit as $c$ approaches a nearby Rayleigh eigenvalue $c_0$, which leads to the  $\mc{O}_{\gamma,c_0}(\nu)$ term. However, \eqref{eq:eigenexp2} holds regardless of the presence of such a Rayleigh eigenvalue.\\

    Similarly, the adjoint eigenmode is of the form
    \begin{equation}\label{eq:eigenexpadj2} \phi^*_{c,\nu}\propto \phi_{c,\nu}^{*,s,-}+ \frac{(\partial_y- \nu^{1/2-\gamma})\phi_{c,\nu}^{*,s,-}(0)}{(\partial_y-\nu^{1/2-\gamma})\phi_{c,\nu}^{*,f,-}(0)}\phi_{c,\nu}^{*,f,-}\sim \frac{\overline{\phi_c^-}}{U_0-\bar{c}}+O\l \mc{O}^*_{\gamma,c}(\nu)\r,\qquad \nu\to 0^+.
\end{equation}
\end{remark}
Let $(c,\nu)$ be an Orr-Sommerfeld eigenvalue, and let $\phi_{c,\nu}$ be an eigenmode. By our assumption that $\phi_{c,\nu}\notin \Im \orr_{c,\nu}$, we can then define a projector to $\Im \orr_{c,\nu}$ as 
\begin{align}
\label{eq:proj} P_{c,\nu}:L^2(\mathbb{R}_+)&\to \Im \orr_{c,\nu}\\
\nonumber \phi &\mapsto \phi- \lambda_{c,\nu}\phi_{c,\nu},
\end{align}
where $\lambda_{c,\nu}$ is the unique real number such that $\phi-\lambda_{c,\nu}\phi_{c,\nu}\in \Im \orr_{c,\nu}$. In fact, we have
$$\lambda_{c,\nu}=\frac{\int_0^{\infty}(\partial_{y}-\nu^{\gamma-1/2}\partial_{yy})\tilde{G}_{c,\nu}(x,0)\phi(x)\,\mathrm{d}x}{\int_0^{\infty}(\partial_{y}-\nu^{\gamma-1/2}\partial_{yy})\tilde{G}_{c,\nu}(x,0)\phi_{c,\nu}(x)\,\mathrm{d}x}.$$
Proposition \ref{orrim} ensures that the denominator in the above expression cannot vanish for $\nu$ small enough.
A simpler formula can be obtained by recalling that
$$\Im \orr_{c,\nu}\subset {\ker \orr^*_{c,\nu}} ^{\perp}= \lpl \phi^*_{c,\nu} \rer^{\perp},$$
and so
$$0= \lpl \phi- \lambda_{c,\nu}\phi_{c,\nu},\phi^*_{c,\nu}\rer_{L^2} \iff \lambda_{c,\nu}= \frac{\lpl \phi, \phi^*_{c,\nu}\rer_{L^2}}{\lpl \phi_{c,\nu},\phi^*_{c,\nu}\rer_{L^2}}.$$
We can finally define the \emph{pseudo-inverse} of the Orr-Sommerfeld operator.
\begin{deff} Let $(c,\nu)$ be an Orr-Sommerfeld eigenvalue. We define the \emph{pseudo-inverse} of the Orr-Sommerfeld operator $\orr_{c,\nu}$ as
\begin{align*}
    \tilde{\orr}_{c,\nu}^{-1}:C^{3}_b(\mathbb{R}_+)&\to \mc{D}_{\nu}\\
    \psi &\mapsto \int_0^{\infty}\tilde{G}_{c,\nu}(x,y)P_{c,\nu}\psi(x)\,\mathrm{d}x,
\end{align*}
so that $\tilde{Orr}^{-1}\psi\in \mc{D}_{\nu}$, and
\begin{equation}
\orr_{c,\nu}\tilde{\orr}_{c,\nu}^{-1}\psi= P_{c,\nu}\psi=\phi-\lambda_{c,\nu}\phi_{c,\nu}.
\end{equation}
\end{deff}

\section{Generator functions}\label{gen}

To construct the instability, we will carry out our analysis using the so-called generator functions. We will use the same generator functions which were employed to construct the instability for the no-slip case in \cite{grenier3}. We summarize the construction here, focusing on some aspects which are of particular relevance to this paper.

For a function $g:\mathbb{R}_+\to \mathbb{R}$, $\delta \geq 0$ and $\ell \in \mathbb{N}_{\geq 0}$, define the norm
$$\|g\|_{\ell,\delta}:=\begin{cases} \sup_{y\geq 0}\varphi(y)^{\ell}|g(y)|\l \delta^{-1}e^{-y/\delta}+1\r^{-1}& \delta>0,\\
\sup_{y\geq 0}\varphi(y)^\ell |g(y)|& \delta=0;
\end{cases}\qquad \varphi(y):=\frac{y}{1+y}.$$
Now define the weight function
$$w_{\ell,\delta}(y):=\varphi(y)^{\ell}\l\delta^{-1}e^{-y/\delta}+1\r^{-1},\qquad y\geq 0.  $$
Then $w_{\ell,\delta}$ is increasing in $y\geq 0$ and $w_{\ell,\delta}(0)=0$ for all $\ell>0$.

For $\ell=0$, we have $w_{\ell,\delta}=\l\delta^{-1}e^{-y/\delta}+1\r^{-1}\in \left[ \frac{1}{1+\delta^{-1}},1\right].$ Therefore, the $\|\cdot\|_{0,\delta}$ norms control the $L^{\infty}$ norm. However, the $\|\cdot\|_{\ell,\delta}$ norms do not control the $L^{\infty}$ norms of any derivatives, since for $\ell \geq 1$, $ \varphi(y)$ appears with a positive exponent and $\varphi(0)=0$.  Another property is that for any $k\in \mathbb{N}$ and for any function $g$,
\begin{equation}
    \|g\|_{\ell+k,\delta}=\|\varphi^k w_{\ell,\delta}g\|_{L^{\infty}}\leq \|w_{\ell,\delta}g\|_{L^{\infty}}=\|g\|_{\ell,\delta}.
\end{equation}

As for the role of $\delta$, we have the following relation between the $\|\cdot\|_{\ell,\delta}$ and $\|\cdot\|_{\ell,0}$ norms:
\begin{equation}\label{eq:gnorms}
    \frac{1}{1+\delta^{-1}}\|\cdot\|_{\ell,0}\leq \|\cdot \|_{\ell,\delta}\leq \|\cdot\|_{\ell,0}.
\end{equation}
In other words, the $\|\cdot\|_{\ell,\delta}$ and $\|\cdot\|_{\ell,0}$ norms are equivalent for any fixed $\ell$. However, in our application we will interpret $\delta$ as 
\begin{equation}\label{eq:delta}\delta=\gamma_0\nu^{1/4},\end{equation}
for some $\gamma_0>0$. We will simplify the notation by writing \eqref{eq:delta} as $\delta \propto \nu^{1/4}$ from now on. A consequence of \eqref{eq:delta} is that the mapping between the  $\|\cdot\|_{\ell,\delta}$ and $\|\cdot\|_{\ell,0}$ norms is not uniformly bounded as $\nu\to 0$.
\begin{deff}\label{gendef}
Let $g=g(y):\mathbb{R}_+\to \mathbb{R}$ be a smooth function. We define the \emph{one-variable generator function} as
\begin{equation}\label{eq:gendelta}\gen_{\delta}g(z_2):=\sum_{\ell \geq 0}\|\partial_y^{\ell}g\|_{\ell,\delta}\frac{z_2^{\ell}}{\ell!}.\end{equation}
Let now $f=f(x,y):\mathbb{T}\times \mathbb{R}_+\to \mathbb{R}$ be a two-variable function, smooth in $x$ and analytic in $y>0$. For $\alpha \in \mathbb{Z}$, let $f_{\alpha}(y)$ be the Fourier transform in $x$ of $f$. For $\delta,z_1,z_2\geq 0$ we can define the \emph{two-variable generator function} as
\begin{equation}\label{eq:gendelta2}\gen_{\delta}f(z_1,z_2):=\sum_{\alpha \in \mathbb{Z}}\sum_{\ell \geq 0}e^{z_1|\alpha|}\|\partial_y^{\ell}f_{\alpha}\|_{\ell,\delta}\frac{z_2^\ell}{\ell!},\qquad \forall \delta \geq 0.\end{equation}
\end{deff}
\begin{remark} Following the notation of Definition \ref{gendef}:
\begin{enumerate}
\item From \eqref{eq:gendelta} and \eqref{eq:gendelta2} it follows that 
$$\gen_{\delta} f(z_1,z_2)=\sum_{\alpha \in \mathbb{Z}}e^{z_1|\alpha|}\gen_{\delta}\l z_2\mapsto f(z_1,z_2)\r.$$

\item The maps $z_1\mapsto \gen_{\delta}f(z_1,z_2)$, $z_2\mapsto \gen_{\delta} f(z_1,z_2)$ and $z_2\mapsto \gen_{\delta} g(z_2)$ are all increasing.

\item From \eqref{eq:gnorms} we deduce that in the two-variable case we have $\gen_{0}f(z_1,z_2)<\infty$ if and only if $\gen_{\delta}f(z_1,z_2)<\infty$ for any (and all) $\delta>0$, and similarly in the one-variable case.
\item 
If $\gen_{\delta}f<\infty$ for some $\delta \geq 0$, then $f$ is analytic in $y$ except possibly for $y=0$. In addition, there is a limit on how the norms of derivatives of $f$ can grow. For the above series to converge, one needs a constant $C>0$ such that
$$\|\partial_y^{\ell}f\|_{\ell,\delta}\leq C^{\ell}\ell!\;.$$
\end{enumerate}
\end{remark}

We refer to \cite{greniernguyen}, Section 3 for a more detailed discussion on the general properties of the generator functions. 

\begin{remark}\label{genscale}
An important characteristic of generator functions is their behavior with respect to \emph{boundary layer}-type functions, which results from the $\varphi^{\ell}$ factor in the $\|\cdot\|_{\ell,\delta}$ norms. Let $\delta>0$, and let $f$ be smooth and rapidly decaying. Then \begin{equation}\label{eq:deltaresc}\|\partial_y^{\ell}\l f(y/\delta)\r\|_{\ell,\delta}\leq \delta^{-\ell}\sup_{y\geq 0}y^{\ell}|(\partial_y^{\ell}f)(y/\delta)|=\sup_{y\geq 0}y^{\ell} |f(y)|.\end{equation}
This shows that the generator function $\gen_{\delta}$ of $y\mapsto f(y/\delta)$  can be bounded by a constant independent from $\delta$. More generally, by \eqref{eq:gnorms}, this extends to functions of the type $y\mapsto g(y)+f(y/\delta)$, where $g$ and $f$ are smooth and rapidly decaying. We call these \emph{boundary layer} functions.
\end{remark}

In the following Proposition, we establish that the projection operator defined in \eqref{eq:proj} is continuous with respect to the generator functions. For this, we introduce the constraint $|\alpha|\leq \nu^{-1/4}$, which will be assumed to be satisfied for any pair $(\alpha,\nu)$ for the remainder of the paper, unless otherwise specified. 
\begin{prop}\label{projbound} Let $\delta\propto \nu^{1/4}$. 
   There exists a constant $C>0$, independent from $\alpha,\nu$ and continuous in $c$ as long as $\Im c \gg 0$, such that for all $\phi \in  L^2(\mathbb{R}_+)$ with $\gen_0\phi <\infty$ we have
   \begin{equation}\label{eq:projbound}
    \gen_\delta P_{c,\nu}\phi \leq C \gen_{\delta}\phi.
\end{equation}
\end{prop}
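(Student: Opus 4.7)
The plan is to exploit the explicit formula $P_{c,\nu}\phi = \phi - \lambda_{c,\nu}\phi_{c,\nu}$ with
\begin{equation*}
\lambda_{c,\nu} = \frac{\lpl \phi,\phi^*_{c,\nu}\rer_{L^2}}{\lpl \phi_{c,\nu},\phi^*_{c,\nu}\rer_{L^2}}
\end{equation*}
derived just above the definition of the pseudo-inverse. Since $\gen_\delta$ is subadditive and positively homogeneous, the triangle inequality gives, pointwise in $z_2$,
\begin{equation*}
\gen_\delta P_{c,\nu}\phi \leq \gen_\delta \phi + |\lambda_{c,\nu}|\,\gen_\delta \phi_{c,\nu},
\end{equation*}
reducing the proposition to three uniform bounds: (i) $\gen_\delta \phi_{c,\nu}\leq C$; (ii) $|\lpl \phi_{c,\nu},\phi^*_{c,\nu}\rer_{L^2}|\geq c_0>0$; (iii) $|\lpl \phi,\phi^*_{c,\nu}\rer_{L^2}|\leq C\|\phi\|_{0,\delta}$. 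The last suffices because $\|\phi\|_{0,\delta} = \gen_\delta \phi(0) \leq \gen_\delta \phi(z_2)$ by monotonicity of the generator function in $z_2$.

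For (i), I would first normalize $\phi_{c,\nu}$ so that the coefficient of the slow part in \eqref{eq:eigenexp2} equals one, so that $\phi_{c,\nu} = \phi^{s,-}_{c,\nu} + A\phi^{f,-}_{c,\nu}$ with $A$ the explicit boundary-value ratio. The slow profile $\phi^{s,-}_{c,\nu}$ is an $O(\sqrt{\nu})$-perturbation of the Rayleigh decaying solution $\phi_c^-$, and $\gen_\delta \phi_c^-$ is finite because the weighted-analyticity hypothesis \eqref{eq:uscond}$_3$ is precisely tailored to yield $\|\partial_y^\ell \phi_c^-\|_{\ell,0} \lesssim C^\ell \ell!$ through the Rayleigh equation. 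The fast profile $\phi^{f,-}_{c,\nu} \approx e^{-\int \mu\,dy}$ is a boundary-layer function at scale $\mu^{-1}$; the rescaling computation in Remark \ref{genscale}, via the substitution $z = \mu y$, yields a bound of the form $\sup_z z^\ell |f^{(\ell)}(z)|$ independent of the boundary-layer scale, so $\gen_\delta \phi^{f,-}_{c,\nu} \lesssim 1$. A direct calculation using the boundary-value asymptotics listed between \eqref{eq:eigenexp} and \eqref{eq:ogamma} shows $|A|\to 0$ as $\nu\to 0^+$ in both regimes $\gamma > 3/4$ and $1/2 < \gamma \leq 3/4$ (the dominant term in the denominator shifts between $\mu(0)$ and $\nu^{\gamma-1/2}\mu(0)^2$), so the fast contribution is indeed small.

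For (ii), the expansions \eqref{eq:eigenexp}--\eqref{eq:eigenexpadj} give
\begin{equation*}
\lpl \phi_{c,\nu},\phi^*_{c,\nu}\rer_{L^2} \longrightarrow \int_0^\infty \frac{(\phi_c^-)^2}{U_0 - c}\,dy \qquad \text{as } \nu\to 0^+,
\end{equation*}
which is non-zero by Fredholm duality under the simple Rayleigh eigenvalue assumption $\phi_c^- \notin \Im \ray_{c_0}$ stated before \eqref{eq:eigenexp2}, so continuity in $\nu$ provides a uniform lower bound. For (iii), using $|\phi(y)|\leq \|\phi\|_{0,\delta}(\delta^{-1}e^{-y/\delta}+1)$,
\begin{equation*}
|\lpl \phi,\phi^*_{c,\nu}\rer_{L^2}| \leq \|\phi\|_{0,\delta}\l \delta^{-1}\int_0^\infty e^{-y/\delta}|\phi^*_{c,\nu}(y)|\,dy + \|\phi^*_{c,\nu}\|_{L^1}\r \leq \|\phi\|_{0,\delta}\l \|\phi^*_{c,\nu}\|_{L^\infty} + \|\phi^*_{c,\nu}\|_{L^1}\r,
\end{equation*}
where the $\delta^{-1}$ is absorbed by $\int e^{-y/\delta}\,dy = \delta$. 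Both $\|\phi^*_{c,\nu}\|_{L^\infty}$ and $\|\phi^*_{c,\nu}\|_{L^1}$ are uniformly bounded: the slow part inherits exponential decay from $\phi_c^-$ (and is bounded pointwise using $|U_0(y) - \bar c|\geq \Im c > 0$), while the fast-boundary-layer part has $L^1$ mass $\lesssim \mc{O}^*_{\gamma,c}(\nu)\mu^{-1}\to 0$. I expect the main obstacle to be step (i): carefully tracking the generator-function estimate for the fast Orr-Sommerfeld profile at scale $\mu^{-1}\leq \delta$, and handling the delicate cancellations in the boundary-value ratio $A$ through both regimes of $\gamma$.
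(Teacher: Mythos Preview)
Your proposal is correct and follows essentially the same approach as the paper: the same triangle-inequality decomposition $P_{c,\nu}\phi=\phi-\lambda_{c,\nu}\phi_{c,\nu}$ into the three bounds you label (i)--(iii), with (i) handled via the boundary-layer scaling of Remark~\ref{genscale} and (ii)--(iii) via the asymptotic expansions \eqref{eq:eigenexp2}--\eqref{eq:eigenexpadj2} of the eigenmode and adjoint eigenmode.

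Two small differences are worth flagging. For (ii), the paper does not invoke the simplicity assumption but instead gives a direct lower bound: with $\phi_c^-$ normalized in $L^2$, the paper observes $|\lpl\phi_{c,\nu},\phi^*_{c,\nu}\rer|\to\bigl|\int(\phi_c^-)^2/(U_0-c)\bigr|\gtrsim 1/(\|U_0\|_{L^\infty}+|c|)$, which is uniform in $\alpha$ (in practice $\alpha=\alpha_0$ is fixed anyway, since the projector is only applied at the eigenvalue). For (iii), the paper bounds $|\lpl\phi,\phi^*_{c,\nu}\rer|\leq\|\phi^*_{c,\nu}\|_{L^1}\|\phi\|_{L^\infty}$ and then passes to $\gen_\delta\phi$; your splitting through $|\phi(y)|\leq\|\phi\|_{0,\delta}(\delta^{-1}e^{-y/\delta}+1)$, producing the pair $\|\phi^*_{c,\nu}\|_{L^\infty}+\|\phi^*_{c,\nu}\|_{L^1}$, is in fact the cleaner route, since $\|\phi\|_{L^\infty}\leq(1+\delta^{-1})\|\phi\|_{0,\delta}$ carries a factor $\sim\nu^{-1/4}$ that your argument avoids.
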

\begin{proof}Let $\phi_{c,\nu}$ and $\phi_{c,\nu}^*$ be the eigenmode and adjoint eigenmode, normalized so that they satisfy \eqref{eq:eigenexp2} and \eqref{eq:eigenexpadj2}. Let us also assume that the Rayleigh solution $\phi_c^-$ is normalized in the $L^2$ norm.  We have
$$\|\partial_y^{\ell}(\phi-\lambda_{c,\nu}\phi_{c,\nu})\|_{\ell,\delta}\leq \|\partial_y^{\ell}\phi\|_{\ell,\delta}+|\lambda_{c,\nu}|\|\partial_y^{\ell}\phi_{c,\nu}\|_{\ell,\delta}.$$
The first term in the right-hand side is immediately bounded by $\gen_{\delta}\phi$. As for the second term, first notice that since $\delta \propto\nu^{1/4}$ then $\|\partial_y^{\ell}\phi_{c,\nu}\|_{\ell,\delta}$ is bounded by a constant independent from $\nu$ (see \eqref{eq:deltaresc}). To bound $\lambda_{c,\nu}$, recall that we assume $|\alpha| \leq \nu^{-1/4}$ and notice that as $\nu\to 0^+$, by \eqref{eq:eigenexp2} and \eqref{eq:eigenexpadj2} we have
$$|\lpl \phi_{c,\nu},\phi^*_{c,\nu}\rer| =\left| \lpl \phi^{-}_{c},\frac{\overline{\phi^{-}_{c}}}{U_0-\bar{c}}\rer\right|+O(\nu^{1/4}\alpha^{1/2})\gtrsim \frac{1}{\sup|U_0-\bar{c}|}\geq \frac{1}{\|U_0\|_{L^{\infty}}+|c|}.$$
Also, by the above normalization assumptions as well as the first part of \eqref{eq:eigenexpadj2}, as $|\alpha|\to \infty$ and $\nu\to 0^+$ the $L^1$ norm of $\phi^*_{c,\nu}$ satisfies
$$\|\phi^*_{c,\nu}\|_{L^1}\lesssim \sup\frac{1}{|U_0-\bar{c}|} |\alpha|^{-1/2}+|\alpha|^{1/4}\nu^{1/8}.  $$
Therefore
\begin{align*}
    |\lambda_{c,\nu}|&=\left|\frac{\lpl \phi, \phi^*_{c,\nu}\rer_{L^2}}{\lpl \phi_{c,\nu},\phi^*_{c,\nu}\rer_{L^2}}\right|\lesssim (\|U_0\|_{L^{\infty}}+|c|)\|\phi^{*}_{c,\nu}\|_{L^1}\|\phi\|_{L^{\infty}}\leq (\|U_0\|_{L^{\infty}}+|c|)\frac{C_1|\alpha|^{-1/2}+C_2\nu^{1/8}|\alpha|^{1/4}}{\Im c}\|\phi\|_{L^{\infty}}\\
    &\leq \frac{\|U_0\|_{L^{\infty}}+|c|}{\Im c}\gen_{\delta}\phi.
\end{align*}
for some $C>0$ independent from $\alpha,c$ and $\nu$, as long as $\nu$ is small enough. This yields \eqref{eq:projbound},.
\end{proof}

To conclude this section, we show that there exists at least one linearly unstable profile satisfying the conditions given in \eqref{eq:uscond}.

\begin{example}
Consider the profile
$$U_0(y)=e^{-1/y}. $$
We have already proven in \cite{prevpaper} that $U_0$ is linearly unstable for the Euler equations. It also clearly satisfies \eqref{eq:uscond}$_1$ and \eqref{eq:uscond}$_2$. We show that it also
satisfies \eqref{eq:uscond}$_3$, with constant $C=2$. The $k$-th derivative of $U_0$ is given by
$$\dv{}{y^k} e^{-1/y}= k!e^{-1/y}\sum_{j=0}^{k-1}\frac{(-1)^j}{(k-j)!}\begin{pmatrix} k-1\\ j\end{pmatrix}y^{-2k+j}. $$
 We then have
\begin{align*}
    \sup_{y>0}\left|\frac{y^k}{(1+y)^k}\dv{}{y^k}e^{-1/y}\right|&\leq k!\sum_{j=0}^{k-1} \frac{1}{(j+1)!}\begin{pmatrix}k-1\\ j\end{pmatrix} \sup_{y>0}e^{-1/y}y^{-1-j}\\
    &\leq k! \sum_{j=0}^{k-1}\frac{1}{(j+1)!}\begin{pmatrix} k-1\\ j\end{pmatrix} \l \frac{j+1}{e}\r^{j+1}.
\end{align*}

But asymptotically as $k\to \infty$, the last term becomes
$$\sim k!\sum_{j=0}^{k-1}\frac{1}{\sqrt{2\pi (j+1)}} \begin{pmatrix}k-1\\ j\end{pmatrix}\leq k!\sum_{j=0}^{k-1}\begin{pmatrix} k-1\\ j\end{pmatrix} = 2^{k-1}k!.$$
This concludes the proof.
\end{example}
The above example illustrates how condition 
\eqref{eq:uscond}$_3$  is strictly weaker than analyticity, even when all the other assumptions are taken into account.

Recall that $\delta\propto \nu^{1/4}$, where $\nu>0$ is the viscosity. We hereby provide what in our view is a simplified proof compared to \cite{greniernguyen}.

\begin{prop}\label{normbound}Let $(c,\nu)$ not be an Orr-Sommerfeld eigenvalue. Let $\phi\in \mc{D}_{\nu}$ be a solution to the Orr-Sommerfeld equation \eqref{eq:orrs} with a forcing $R$ such that $\|R\|_{0,0}<\infty$.
Then there exists a constant $C>0$ such that for $\alpha \leq \nu^{-1/4}$ and $|\Im c| \gg 0$, we have
\begin{align}
    (1+|\Im c|)\l \|\nabla_{\alpha}\phi\|_{0,0}+\|\Delta_{\alpha}\phi\|_{0,\delta}\r+\frac{\sqrt{\nu}}{|\alpha|}\|\Delta_{\alpha}^2\phi\|_{0,\delta}\leq C \|R\|_{0,\delta}.
\end{align}
\end{prop}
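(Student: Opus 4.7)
The plan is to represent $\phi$ via the Green function of the Orr-Sommerfeld operator. Since $(c,\nu)$ is not an eigenvalue, $\det E(c,\nu)\neq 0$ and the exact Green function $G_{c,\nu}$ satisfying both boundary conditions of \eqref{eq:orrs} exists. Moreover, for $|\Im c|\gg 0$ one is away from any Orr-Sommerfeld eigenvalue (which by Proposition \ref{eigconv} clusters near Rayleigh eigenvalues having bounded imaginary part), so $|\det E(c,\nu)|$ is bounded below uniformly in $(\alpha,\nu)$, and the pointwise bound \eqref{eq:pseudobound} holds for $G_{c,\nu}$ as well. Writing $\phi(y)=\int_0^\infty G_{c,\nu}(x,y)R(x)\,\mathrm{d}x$ and using $|R(x)|\leq \|R\|_{0,\delta}(\delta^{-1}e^{-x/\delta}+1)$, I would estimate each piece of the left-hand side separately.

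For $\|\nabla_\alpha\phi\|_{0,0}$, differentiating under the integral and applying the pointwise bounds with $j=0,1$: the ``$+1$'' contribution to the $x$-integral is controlled by $C(|\alpha|^{j-2}+|\mu|^{j-2})$, while the $\delta^{-1}e^{-x/\delta}$ contribution is controlled by $C(|\alpha|^{j-1}+|\mu|^{j-1})$ via Young's convolution inequality. Using $|\alpha|\geq 1$ together with $|\alpha|/|\mu|\lesssim \nu^{1/8}/|c|^{1/2}\ll 1$ (from $|\alpha|\leq\nu^{-1/4}$ and the expression \eqref{eq:mu} for $\mu$ when $|\Im c|\gg 0$), these integrals are $O(1)$, yielding $(1+|\Im c|)\|\nabla_\alpha\phi\|_{0,0}\lesssim \|R\|_{0,\delta}$.

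For $\|\Delta_\alpha\phi\|_{0,\delta}$, a naive application of the $j=2$ pointwise bound to $\partial_{yy}\phi$ and $\alpha^2\phi$ separately fails: the slow-mode amplitude $|\alpha|e^{-\theta_0|\alpha(x-y)|}$ would produce a contribution of order $|\alpha|\|R\|_{0,\delta}$ in the intermediate range $\delta\lesssim y\lesssim 1/|\alpha|$, violating the $\|\cdot\|_{0,\delta}$-weight. The resolution is to exploit that the slow fundamental solutions approximately satisfy the Rayleigh equation: by Proposition \ref{appsol} and the construction of $\phi^{s,\pm}_{c,\nu}$, one has $\phi^{s,\pm}_{c,\nu}\sim\phi^\pm_c+O(\sqrt{\nu})$ at each order of derivative, so that
$$\Delta_\alpha\phi^{s,\pm}_{c,\nu}=\frac{U_0''}{U_0-c}\phi^{s,\pm}_{c,\nu}+O(\sqrt{\nu}),$$
and the portion of $\Delta_\alpha G_{c,\nu}$ coming from the slow component satisfies $|\Delta_\alpha G_{c,\nu}^{\mathrm{slow}}|\lesssim |G_{c,\nu}^{\mathrm{slow}}|/|\Im c|$; this yields only a negligible contribution to $\|\Delta_\alpha\phi\|_{0,\delta}$ of order $\|R\|_{0,\delta}/[(1+|\Im c|)|\Im c|]$. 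The fast component obeys the naive bound $|\Delta_\alpha G_{c,\nu}^{\mathrm{fast}}|\lesssim|\mu|e^{-\theta_0|\int_x^y\mu|}/(1+|\Im c|)$; since $|\mu|\gtrsim\delta^{-1}$ (from \eqref{eq:mu}), a direct calculation shows that the convolution of $|\mu|e^{-\theta_0|\int\mu|}$ against $\delta^{-1}e^{-x/\delta}$ is bounded pointwise by $C(\delta^{-1}e^{-y/\delta}+1)$, matching the target norm exactly.

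For the last term $\|\Delta_\alpha^2\phi\|_{0,\delta}$, I would solve directly in the Orr-Sommerfeld equation,
$$\frac{\sqrt{\nu}}{i\alpha}\Delta_\alpha^2\phi=(U_0-c)\Delta_\alpha\phi-U_0''\phi-R,$$
and take $\|\cdot\|_{0,\delta}$ on both sides, using the bounds on $\|\phi\|_{0,\delta}$ and $\|\Delta_\alpha\phi\|_{0,\delta}$ from the previous steps and absorbing the factor $(|c|+\|U_0\|_\infty)/(1+|\Im c|)\lesssim 1$ (valid since by Proposition \ref{eigconv} $c$ lies near a Rayleigh eigenvalue with bounded real part). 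The hard part will be the third paragraph: the Rayleigh-type cancellation in the slow mode must be tracked carefully through the viscous approximation errors in Proposition \ref{appsol}, and the convolution estimate for the fast mode must be made uniform in the range $|\alpha|\in[1,\nu^{-1/4}]$, equivalently $|\mu|\in[\delta^{-1},\delta^{-3/2}]$, for which the lower bound $|\mu|\gtrsim \delta^{-1}$ is essential.
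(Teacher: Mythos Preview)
Your approach is correct and reaches the same conclusion, but it is organized differently from the paper's proof. The paper does not decompose the Green function into slow and fast pieces at all; instead it splits the \emph{forcing} as $R=R^b+R^I$, where $R^b=R\,\chi_{[0,1]}\bigl(y/(\delta\log\delta^{-1})\bigr)$ is supported in $[0,\delta\log\delta^{-1}]$ and $R^I$ satisfies $\|R^I\|_{0,0}\le 2\|R\|_{0,\delta}$. Setting $\phi^{I,b}(y)=\int_0^\infty G_{c,\nu}(x,y)R^{I,b}(x)\,\mathrm{d}x$, the interior piece is handled by the $L^1$ bound \eqref{eq:l1bound0} on derivatives of $G_{c,\nu}$ (which at order $j=2$ already gives an $O(1)$ constant), while for the boundary piece one pairs the $L^\infty$ bound on derivatives of $G_{c,\nu}$ with the small support of $R^b$; the final $\|\cdot\|_{0,\delta}$ statement then follows from $\|\cdot\|_{0,\delta}\le\|\cdot\|_{0,0}$. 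The paper writes this out explicitly only for $\|\phi\|_{0,0}$ and refers to \cite{greniernguyen} for the rest.

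What your route buys is that it makes transparent the one genuine cancellation that any proof must exploit: $\Delta_\alpha$ acting on the slow component of $G_{c,\nu}$ is small because $\phi^{s,\pm}_{c,\nu}$ approximately solves the Rayleigh equation, so the naive $|\alpha|e^{-\theta_0|\alpha||x-y|}$ bound from \eqref{eq:pseudobound} at $j=2$ overshoots badly. The paper's splitting of $R$ localizes the difficulty to $\phi^b$ but still needs the same observation to close for $\|\Delta_\alpha\phi^b\|_{0,\delta}$ when $|\alpha|$ is near $\nu^{-1/4}$; this is implicit in its appeal to \cite{greniernguyen}. Conversely, the paper's decomposition of $R$ has the advantage that for $\phi^I$ the entire estimate reduces to the clean $L^1$ bound on $G$ with no mode-by-mode analysis at all. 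Both treatments of the bilaplacian term (reading it off from the equation) coincide.
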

\begin{proof} We just need to prove
$$  (1+|\Im c|)\l \|\nabla_{\alpha}\phi\|_{0,0}+\|\Delta_{\alpha}\phi\|_{0,\delta}\r\leq C \|R\|_{0,\delta},$$
as the estimate on the remaining term follows from \eqref{eq:orrs}. Following \cite{greniernguyen}, let us split $R$ as 
$$ R= R^b+R^I,$$
where
$$R^b(y)=R(y)\chi_{[0,1]}\l \frac{y}{\delta\log\delta^{-1}}\r.$$
In this case, we have $\|R^I\|_{0,0}\leq 2\|R\|_{0,\delta}.$ We may now set
$$\phi^b(y):=\int_0^{\infty}G_{c,\nu}(x,y)R^b(x)\,\mathrm{d}x, \qquad \phi^I(y):=\int_0^{\infty}G_{c,\nu}(x,y)R^I(x)\,\mathrm{d}x.$$
For $\phi^I$, by \eqref{eq:l1bound0} we have
$$\|\phi^I\|_{0,0} \leq \|R\|_{0,0}\int_0^{\infty}|G_{c,\nu}(x,y)|\,\mathrm{d}x\leq C\|R\|_{0,0}\l |\alpha|^{-2}+\mu^{-2}\r.$$
As for $\phi^b$, notice that $R^b(x)=0$ for $x>\delta\log\delta^{-1}$. Therefore, we obtain
\begin{align*}
\|\phi^b\|_{0,0}&\leq \|R\|_{0,\delta}\int_0^{\delta\log\delta^{-1}}|G_{c,\nu}(x,y)|(1+\delta^{-1}e^{-x/\delta})\,\mathrm{d}x\leq \|R\|_{0,\delta}\|G_{c,\nu}\|_{L^{\infty}}(1-\delta +\delta\log\delta^{-1})\\
&\leq \frac{C}{1+|\Im c|}(\alpha^{-1}+\mu^{-1})\|R\|_{0,\delta},
\end{align*}
where $C$ does not depend on $\delta$ as long as $\delta$ (i.e. $\nu$) is small enough. Finally, by \eqref{eq:gnorms} we conclude the estimate for the $\|\cdot\|_{0,\delta}$ norms.

\end{proof}
The above estimate can then be extended to derivatives of arbitrary orders and thus to generator functions.

\begin{corollary}\label{genbound}Let $\phi_{\alpha}\in \mc{D}_{\nu}$ be a solution to the Orr-Sommerfeld equation \eqref{eq:orrs} with a forcing $R_{\alpha}$ such that $\|R_{\alpha}\|_{0,0}<\infty$. Then there exists a constant $C>0$ such that for $|\alpha|\leq \nu^{-1/4}$ and $|\Im c|\gg 0$, we have
\begin{equation*}
   \gen_0 \l \nabla_{\alpha}\phi_{\alpha}\r+ \gen_{\delta}\l \Delta_{\alpha}\phi_{\alpha} \r\leq \frac{C}{1+|\Im c|}\gen_{\delta}R_{\alpha};
\end{equation*}
moreover, if
\begin{gather*} \phi(t,x,y):=\sum_{|\alpha|\leq \nu^{-1/4}}\phi_{\alpha}(y)e^{i\alpha(x-\Re ct)},\qquad (u,v):=\nabla^{\perp}\phi,\qquad \omega:=\Delta \phi,\\
 R(t,x,y):=\sum_{|\alpha|\leq \nu^{-1/4}}R_{\alpha}(y)e^{i\alpha(x-\Re ct)},\end{gather*}
then taking the generator functions with respect to the $x$ and $y$ variables, we have
\begin{align*}
\partial_{z_1}^j\l \gen_0 u + \gen_0 v+ \gen_{\delta}\omega \r& \leq \frac{C}{1+|\Im c|}\partial_{z_1}^j\gen_{\delta} R,\qquad j\in \lll 0;1\rrr,\\
\partial_{z_2}\l \gen_0 u +\gen_0 v +\gen_{\delta}\omega \r&\leq \frac{C}{1+|\Im c|}\l \partial_{z_2}\gen_{\delta} R+\gen_{\delta}R\r,
\end{align*}
for $z_2$ small enough, independently from $\nu$ and $\alpha$.
\end{corollary}
\begin{proof} The proof is the same as in \cite{grenier2}. Note that in our case the shear flow $U_0$ is not necessarily analytic, but the proof only relies on the assumption that the series
$$ \sum_{n=0}^{\infty}\l \|\partial_y^nU_0\|_{L^{\infty}}+\|\partial_y^{n+2}U_0\|_{L^{\infty}}\r\frac{z^n}{n!}$$
is convergent for $z$ small enough. This follows from our assumption \eqref{eq:uscond}$_4$ instead.
\end{proof}
The results obtained in this section thus imply that we can rely on the same estimates as the paper \cite{grenier2} for the solutions to \eqref{eq:orrs}.

\subsection{The Orr-Sommerfeld equation for $\alpha=0$}\label{alpha00}
When $\alpha=0$, the Orr-Sommerfeld equation is not defined according to \eqref{eq:orrs0}. We wish to consider a particular extension arising from the assumption that $c=\Re c_1+\frac{in}{\alpha}\Im c_1$ when $\alpha\neq 0$, where $n\in \mathbb{N}_{\geq 1}$, $\Im c_1>0$ (the meaning of these parameters will be made clear later). To derive it, we simply multiply the Orr-Sommerfeld equation \eqref{eq:orrs} by $\alpha$ and then set $\alpha=0$. Introducing a forcing $R$, which we assume to be integrable and with $\gen_0 R<\infty$, we reach the equations
\begin{equation}\label{eq:alpha0}
   \begin{cases} n\Im c_1 \phi'' -\sqrt{\nu}\phi^{(4)}=R & \text{ on }\mathbb{R}_+;\\
   \phi(0)=0;\\
   \phi'(0)=\nu^{\gamma-1/2}\phi''(0).
   \end{cases}
\end{equation}

Our goal is to extend the estimates of Corollary \ref{genbound} to the above equation.  Since $\alpha=0$, we only need to control the generator functions of $\phi'$ and $\phi''$. In fact, for our application we only need the estimates on $\phi''$.
\begin{prop}\label{alpha0} Let $\phi$ be a solution to \eqref{eq:alpha0}. Let $u(y)=\phi'(y)$, and $\omega(y)=u'(y)$. Then 
\begin{align*}
    \partial_{z_1}^j\gen_{\delta}\omega&\leq \frac{C}{1+n\Im c_1}\partial_{z_1}^j\gen_{\delta}R,\qquad j\in \lll 0;1\rrr,\\
    \partial_{z_2}\gen_{\delta}\omega& \leq \frac{C}{1+n\Im c_1}\l \partial_{z_2}Gen_{\delta}R + \gen_{\delta} R\r,
\end{align*}
for $z_2$ small enough, independently from $\nu$, $n$ and $\alpha$.\end{prop}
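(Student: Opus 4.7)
The plan is to follow the scheme of Proposition \ref{normbound} and Corollary \ref{genbound}, taking advantage of the substantial simplification that $\alpha=0$ brings. Setting $\omega=\phi''=u'$, equation \eqref{eq:alpha0} collapses to the constant-coefficient second-order ODE $-\sqrt{\nu}\omega''+n\Im c_1\,\omega=R$ on $\mathbb{R}_+$, whose characteristic exponents are $\pm\lambda$ with $\lambda=(n\Im c_1)^{1/2}\nu^{-1/4}$. The key scalings $\sqrt{\nu}\lambda^2=n\Im c_1$ and $\lambda\delta\propto(n\Im c_1)^{1/2}$, coming from $\delta\propto\nu^{1/4}$, will drive every estimate.

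First I would build a Green function $G(x,y)$ for this ODE on $\mathbb{R}_+$. Starting from the free-space kernel $\frac{1}{2\sqrt{\nu}\lambda}e^{-\lambda|x-y|}$, I would add a boundary correction of the form $c_0(x)e^{-\lambda y}$, chosen so that the $\phi$ reconstructed from $\omega$ by two integrations (with the constraint of vanishing at infinity) satisfies the boundary data \eqref{eq:alpha0}$_{2,3}$, exactly in the spirit of $\tilde{G}^b_{c,\nu}$ from Section \ref{pseudo}. The resulting $G$ obeys the pointwise bound
\begin{equation*}
|\partial_y^j G(x,y)|\lesssim \frac{\lambda^{j-1}}{1+n\Im c_1}\l e^{-\theta_0\lambda|x-y|}+e^{-\theta_0\lambda(x+y)}\r,\qquad j=0,1,2,
\end{equation*}
and the $L^1$ estimate $\int_0^\infty|\partial_y^j G(x,y)|\,\mathrm{d}x\lesssim \lambda^{j-2}/(1+n\Im c_1)$. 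I would then repeat the splitting argument of Proposition \ref{normbound}: set $R^b(y)=R(y)\chi_{[0,1]}(y/(\delta\log\delta^{-1}))$ and $R^I=R-R^b$, so $\|R^I\|_{0,0}\leq 2\|R\|_{0,\delta}$. The interior contribution to $\omega=\int_0^\infty G(x,\cdot)R(x)\,\mathrm{d}x$ is controlled by $\|R^I\|_{0,0}/(n\Im c_1)$ via the $L^1$ bound, while the boundary contribution is bounded by $\|G\|_{L^\infty}$ integrated against the weighted $R^b$ over its support of length $\delta\log\delta^{-1}$. Combining and invoking \eqref{eq:gnorms} yields $\|\omega\|_{0,\delta}\lesssim \|R\|_{0,\delta}/(1+n\Im c_1)$.

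For the generator function estimates, I would differentiate the ODE $\ell$ times to obtain the same constant-coefficient equation for $\partial_y^\ell\omega$ with forcing $\partial_y^\ell R$, apply the previous estimate in the weighted norm $\|\cdot\|_{\ell,\delta}$, and sum the resulting series. The $\partial_{z_2}$ bound follows as in \cite{grenier2}, with the extra $\gen_\delta R$ term arising from the commutator between the $y$-derivative and the $\varphi^\ell$ weight; the $\partial_{z_1}^j$ case with $j=1$ is trivial, since embedding the one-variable setting into the two-variable generator function framework with $\alpha=0$ makes the factor $e^{z_1|\alpha|}$ identically $1$, so both sides vanish. The main point requiring care is verifying that the boundary correction to $G$ does not spoil the weighted estimates near $y=0$; however, $e^{-\lambda y}$ decays on a scale comparable to $\delta$, so Remark \ref{genscale} applies directly and the boundary-layer behavior is harmlessly absorbed by the vanishing $\varphi^\ell(y)$ factor in the $\|\cdot\|_{\ell,\delta}$ norm.
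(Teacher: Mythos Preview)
Your overall strategy---reduce to the constant-coefficient ODE $n\Im c_1\,\omega-\sqrt{\nu}\,\omega''=R$ and read off the generator estimates from an explicit kernel---is exactly what the paper does. But your pointwise bound on $G$ is off by a factor of $\lambda^2$. The free-space kernel is $G_0(x,y)=\frac{1}{2\sqrt{\nu}\lambda}e^{-\lambda|x-y|}$, and since $\sqrt{\nu}\lambda^2=n\Im c_1$ this equals $\frac{\lambda}{2n\Im c_1}e^{-\lambda|x-y|}$; hence $|\partial_y^jG_0|\sim\frac{\lambda^{j+1}}{n\Im c_1}e^{-\lambda|x-y|}$, not $\frac{\lambda^{j-1}}{1+n\Im c_1}$. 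You have pattern-matched with \eqref{eq:pseudobound}, but that bound is for the \emph{fourth}-order Orr--Sommerfeld Green function acting on the stream function $\phi$; here the equation is second order and already written for $\omega$, which shifts the exponent by two. In particular $\|G_0\|_{L^\infty}\sim\lambda/(n\Im c_1)$ diverges like $\nu^{-1/4}$, so the crude step ``bound $\omega^b$ by $\|G\|_{L^\infty}$ times the integrated weighted $R^b$, then invoke \eqref{eq:gnorms}'' from your splitting argument leaves an uncontrolled factor of $\lambda$ and does not close.

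The paper proceeds more simply. It writes only the free-space kernel $G_0$ (no boundary correction---the proposition only asks for $\omega$, and any two decaying solutions of the second-order ODE differ by a multiple of $e^{-\lambda y}$, a boundary-layer profile harmless by Remark~\ref{genscale}), checks the $L^1$ bound $\int_0^\infty|G_0(x,y)|\,\mathrm{d}x\le 1/(n\Im c_1)$, and then observes that the kernel has the same structure as in the Dirichlet case treated in Proposition~5.5 of \cite{grenier3}, to which it defers for the weighted generator estimates. If you want a self-contained argument, do not split $R$: estimate directly
\[
|\omega(y)|\le\|R\|_{0,\delta}\int_0^\infty|G_0(x,y)|\bigl(1+\delta^{-1}e^{-x/\delta}\bigr)\,\mathrm{d}x,
\]
and compute the second piece explicitly using $\lambda\delta\propto(n\Im c_1)^{1/2}$; it yields precisely $\frac{1}{n\Im c_1}\delta^{-1}e^{-y/\delta}$, hence $\|\omega\|_{0,\delta}\lesssim\|R\|_{0,\delta}/(n\Im c_1)$. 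Your handling of $\partial_{z_1}$ and of higher $\ell$ via differentiating the constant-coefficient ODE is fine once this base estimate is repaired.
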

\begin{proof}

We know that \eqref{eq:alpha0} is equivalent to
$$\begin{cases} n\Im c_1 \omega(y)- \sqrt{\nu}\omega''(y) =R(y) & y \geq 0;\\ 
u'(y)=\omega(y)& y\geq 0;\\
u(0)=\nu^{\gamma-1/2}\omega(0).
\end{cases} $$
A decaying solution to the first equation is given by
$$\omega(y)=\int_0^{\infty}G_0(x,y)R(x)\,\mathrm{d}x,\qquad \beta=\sqrt{n\Im c_1}\nu^{-1/4},$$
where the Green function $G_0$ is
$$G_0(x,y)=-\frac{1}{2\beta \sqrt{\nu}}e^{-\beta|y-x|}.$$

In this case we have
$$\int_0^{\infty}|G_0(x,y)|\,\mathrm{d}x=\frac{1}{2\beta^2\sqrt{\nu}}\l 2-e^{-\beta y}\r\leq \frac{1}{n\Im c_1},$$
so the coefficients of $G_0$ are bounded uniformly in $\nu$. 
In other words, they satisfy the same bounds as in the Dirichlet case, which can be applied as required in the construction of Section \ref{instab} (see Proposition 5.5 from \cite{grenier3}). This yields the bounds for $\omega$. 
\end{proof}
Obtaining bounds for $U_0$ is not obvious due to the lack of a term of order lower than $2$ in \eqref{eq:alpha0}. It requires the additional assumption that $R$ is in boundary layer form, as defined at the end of Remark \ref{genscale}. In this case, the Laplace equation estimates obtained in Section 4.2 of \cite{grenier3} allow us to estimate the generator functions of $\phi$ and $U_0$ through the generator functions of $\omega$.

\section{Construction of the instability}\label{instab}

Let $N_0>0,s\geq 0,\gamma>1/2$, and let $U_0\in C^{\infty}(\mathbb{R}_+;\mathbb{R})$ be a linearly unstable shear flow for the linearized Euler equation, i.e. such that the associated Rayleigh equation \eqref{eq:ray} admits at least an eigenvalue $c_0$ with $\Im c_0>0$ for some $\alpha_0\in\mathbb{Z}$, and satisfying the assumptions \eqref{eq:uscond}. Consider then the family $\lll U^{\nu}\rrr_{\nu>0}$ given as the classical solutions to the heat equation \eqref{eq:shflows}. Recall that, by Lemma 4.2 from \cite{prevpaper}, we know that for $\gamma >1/2$ we have for some constant $C>0$, for $t>0$ small enough  and for all $s\geq 0$,
$$\|U^{\nu}(\sqrt{\nu}t)-U_0\|_{H^s} \leq C\nu^{\gamma-1/2}. $$ However, we do not know whether $\left.U^{\nu}\right|_{t=0}$ is linearly unstable for Orr-Sommerfeld. Therefore, we shall construct the leading order term of our solution starting from $U_0$.

Let $\alpha_0\in\mathbb{Z}$ be such that $c_0\in \mathbb{C}$ is a maximally unstable eigenvalue for the associated Rayleigh equation, namely such that $\alpha_0\Im c_0$ is maximum across all Rayleigh eigenvalues for the shear flow $U_0$. The existence of such an $\alpha_0$ is granted by Theorem 4.1 in \cite{grenier}.
By Proposition \ref{eigconv}, we know that for all $\nu>0$ small enough there exists $c_1=c_0+O(\nu^{1/4})$ such that $(c_1,\nu)$ is an eigenvalue for the Orr-Sommerfeld equation. Then, let $\psi^1$ be a corresponding eigenmode. We can use $\psi^1$ to define a real valued solution of the linearized Navier-Stokes equations around $U_0$:
\begin{equation}\label{eq:u1}\mbf{u}^1(t,x,y):= \nabla^{\perp}\l e^{i\alpha_0 (x-\Re c_1t)}\psi^1(y) \r+ \nabla^{\perp}\l e^{-i\alpha_0 (x-\Re c_1t)} \overline{\psi^1(y)}\r. \end{equation}
In particular, $\mbf{u}^1$ is given by the sum of two components with two spatial frequencies, $\alpha_0$ and $-\alpha_0$ respectively.\\

We require that $\psi^1$ be normalized so that
\begin{equation}\label{eq:normaliz}\|\mbf{u}^1(t)\|_{L^{\infty}}=\|\Re \nabla^{\perp}\psi^1\|_{L^{\infty}}=1\qquad \forall \nu>0,t\geq 0.\end{equation}
By the results of Section \ref{orrsomm}, $\psi^1$ may be expanded as follows as $\nu\to 0^+$:
$$\psi^{1}(y)\sim\frac{1}{C}\l\phi_{c_0}^-(y)+O(\mc{O}_{\gamma,c_0}(\nu))e^{-\int_0^y \Re \mu}\r$$
where $\phi_{c_0}^-$ is the Rayleigh eigenmode corresponding to the eigenvalue $c_0$, and $C\neq 0$ is independent from $\nu$. As a consequence, the (isotropically rescaled) $H^s$ norms scale as
$$\|\psi^1\|_{H^s}\lesssim   \nu^{\min \lll 0;3/8-s/4\rrr }.$$ 

Now, if we return to the original variables (which were used in the statement of  \autoref{main}) by reversing the isotropic rescaling:
$$(t,x,y)\mapsto \sqrt{\nu}(t,x,y).$$
In the original variables, due to the rescaling of the $H^s(\mathbb{T}\times \mathbb{R}_+)$ norms, we then have
$$\|\psi^1\|_{H^s}\lesssim \nu^{\min \lll 0; 7/8-3s/4\rrr}.$$

\uline{From now on, for simplicity we shall assume that $\alpha_0=1$}. We wish to contruct a solution of the form
\begin{equation}\label{eq:ansatz}
    \mbf{w}^{\nu}(t,x,y):=\sum_{n=1}^{\infty}\tau^n \mbf{u}^n(t,x,y), \quad \mbf{u}^n(t,x,y):=\nabla^{\perp} \psi^n(y),\quad \psi^n(t,x,y):=\sum_{|\alpha|\leq n\land \nu^{-1/4}}e^{i\alpha(x-\Re c_1 t)}\psi_{\alpha}^n(y),
\end{equation}
where
\begin{equation}\label{eq:tau}\tau=\nu^{N_s} e^{\Im c_1 t},\qquad N_s :=N_0+\max \lll 0;3s/4-7/8\rrr,\end{equation}
such that $\mbf{u}^{\nu}(t,x,y):=\mbf{w}^{\nu}(t,x,y)+\mbf{U}^{\nu}(\sqrt{\nu}t,y)$ solves the Navier-Stokes equations with forcing \eqref{eq:ns2}.
\begin{remark}\begin{enumerate}
\item Thanks to our choice of $N_s$, at time $t=0$ we have as $\nu\to 0^+$, in the original (non isotropically rescaled) variables:
$$\|\left.(\mbf{u}^{\nu}-\mbf{U}_0)\right|_{t=0}\|_{H^s}\sim  \nu^{N_s}\|\left.\mbf{u}^1\right|_{t=0}\|_{H^s}\leq \nu^{N_0},$$
so that the assumption \eqref{eq:main0} from \autoref{main} is automatically satisfied, assuming the sums in \eqref{eq:ansatz} converge in $H^s$. 
\item Since $\Im c_1>0$ in \eqref{eq:u1}, then $\mbf{u}^1$ will grow exponentially in time and this will produce the instability in \autoref{main}.
\item In the ansatz \eqref{eq:ansatz} we have $\psi^n\in \mathbb{R}$, and hence also $\mbf{u}^n=(u^n,v^n)$ and $\mbf{w}^{\nu}=(w_1,w_2)$ are real valued. We choose to truncate the expansion for $\psi^n$ at $|\alpha|\leq \nu^{-1/4}$ because Proposition \ref{normbound} only holds within this range. The sum will of course include the null frequency $\alpha=0$.
\end{enumerate}
\end{remark}

Similarly to our paper \cite{prevpaper}, let $\Lambda\in \mathbb{N}$, $\Lambda \geq 1$ be such that
$$ 2^{-\Lambda} \leq \gamma-\frac{1}{2}.$$
Then $\nu^{-2^{-\Lambda}}(U^{\nu}(0,y)-U_0(y))$ is bounded as $\nu\to 0$, and $\mbf{w}=(w_1,w_2)=\mbf{w}^{\nu}$ must satisfy the following equations:

\begin{equation}\label{eq:idk2}\begin{cases}\partial_t \mbf{w} + (\mbf{U}_0\cdot \nabla) \mbf{w} + ( \mbf{w}\cdot \nabla) \mbf{U}_0+( \mbf{w}\cdot \nabla)  \mbf{w}= \sqrt{\nu}\Delta  \mbf{w}+ \nu^{2^{-\Lambda}}S \mbf{w}+\mbf{f};\\ 
\nabla \cdot  \mbf{w}=0;\\
\partial_y  \mbf{w}\cdot \tau = \nu^{1/2-\gamma} \mbf{w}\cdot \tau;\\ 
 \mbf{w}\cdot n =0;
\end{cases} \end{equation}
where
\begin{equation}\label{eq:sdef} S \mbf{w}(t):= \frac{\mbf{U}_0(y)- \mbf{U}^{\nu}(\sqrt{\nu}t,y)}{\nu^{2^{-\Lambda}}}\cdot \nabla  \mbf{w}(t) +  \mbf{w}(t)\cdot \nabla \l \frac{\mbf{U}_0(y)- \mbf{U}^{\nu}(\sqrt{\nu}t,y)}{\nu^{2^{-\Lambda}}} \r, \end{equation}
and $\mbf{f}$ is yet to be determined.
This formulation is convenient because the left hand side of \eqref{eq:idk2} is now a linearized Navier-Stokes equation around a \emph{time-independent} shear flow, which can be reduced to the well-known Rayleigh and Orr-Sommerfeld equations.

Let us initially assume that the evolution of the shear flow is time independent, so that $S\mbf{w}=0$. In terms of the vorticity ${\omega}$, by taking the scalar curl of \eqref{eq:idk2}$_1$, we obtain  
\begin{equation}\label{eq:idk3}\begin{cases}
\partial_t {\omega} + U_0\partial_x{\omega} -U_0'' w_2 + (\mbf{w}\cdot \nabla){\omega} =\sqrt{\nu}\Delta {\omega}+g;\\
{\omega}= \nu^{1/2-\gamma}\mbf{w}\cdot \tau;\\
\mbf{w}\cdot n=0;
\end{cases}
\end{equation}
where $g=\partial_y f_1-\partial_x f_2$. Let us introduce the new quantities:
$$\tilde{\psi}^n:= e^{n\Im c_1 t}\psi^n;\qquad \tilde{\mbf{u}}^n:=e^{n\Im c_1 t}\mbf{u}^n;\qquad \tilde{\omega}^n:=e^{n\Im c_1 t}\omega^n;\qquad  \omega^n:=\Delta \psi^n. $$
In this case, the ansatz \eqref{eq:ansatz} may be rewritten as
\begin{equation}
\mbf{w}^{\nu}(t,x,y)=\sum_{n=1}^{\infty}\nu^{nN_s} \tilde{\mbf{u}}^n.
\end{equation}

The equation satisfied by $\tilde{\omega}^n$ is obtained by equating the terms with order in $\nu$ belonging to the interval $N_s [n,n+1)$ in \eqref{eq:idk3}. We obtain
\begin{equation}\label{eq:idkn}
    \partial_t \tilde{\omega}^n +U_0\partial_x\tilde{\omega}^n-U_0''\partial_x\tilde{\psi}^n-\sqrt{\nu}\Delta\tilde{\omega}^n=-\sum_{1\leq j\leq n}(\tilde{u}^j\cdot \nabla)\tilde{\omega}^{n-j}+\tilde{g}^n,
\end{equation}
where the $\tilde{g}^n$ are such that $g$ satisfies the expansion $g=\sum_{n=1}^{\infty}\nu^{nN_s}\tilde{g}^n.$

Let us now derive the equation satisfied by ${\psi}_{\alpha}^n$ for $n\geq 2$, which we have introduced in \eqref{eq:ansatz}, and determine the appropriate forcing $g$. Define
$$\mbf{u}_{\alpha}^n(y)=(u_{\alpha}^n(y),v_{\alpha}^n(y)):=(\partial_y\psi_{\alpha}^n,-i\alpha \psi_{\alpha}^
n), \quad \omega_{\alpha}^n(y):=\Delta_{\alpha}\psi_{\alpha}^n=\partial_{yy}\psi_{\alpha}^n-\alpha^2\psi_{\alpha}^n $$
so that 
\begin{gather}\label{eq:psin}\psi^n=\sum_{|\alpha|\leq n\land \nu^{-1/4}}\psi^n_{\alpha}(y)e^{i\alpha (x-\Re c_1t)},\quad \mbf{u}^n = \sum_{|\alpha|\leq n\land \nu^{-1/4}}\mbf{u}^n_{\alpha}(y)e^{i\alpha (x-\Re c_1t)},\nonumber \\
\omega^n=\sum_{|\alpha|\leq n\land \nu^{-1/4}}\omega_{\alpha}^n(y) e^{i\alpha(x-\Re c_1 t)}. \end{gather}
Of course, this implies that $\mbf{u}^n=\nabla^{\perp}\psi^n$ and $\omega^n= \Delta \psi^n. $ 
By plugging these expressions into \eqref{eq:idkn} (neglecting the forcing $\tilde{g}^n$ for now) and equating the terms which share the same frequency $\alpha$, we obtain the equations
\begin{equation}\label{eq:orreqn}
    \begin{cases}\displaystyle
    \orr_{c_{n,\alpha},\nu}\psi_{\alpha}^n=- \frac{1}{i\alpha}\sum_{|\alpha'|\land \leq n\land \nu^{-1/4}}\sum_{1\leq j\leq n-1}\l \mbf{u}_{\alpha'}^j\cdot \nabla_{\alpha}\r \omega_{\alpha-\alpha'}^{n-j} & \alpha \neq 0;\\
        \displaystyle n\Im c_1 (\psi_0^n)'' -\sqrt{\nu}(\psi_{0}^n)^{(4)}=-\sum_{|\alpha|\leq n\land \nu^{-1/4}}\sum_{1\leq j\leq n-1}\l \mbf{u}_{\alpha}^j \cdot \nabla_{\alpha}\r \omega_{-\alpha}^{n-j}.
\end{cases}
\end{equation}

where
\begin{equation}\label{eq:cdef} c_{n,\alpha}= \Re c_1 +i n\frac{\alpha_0}{\alpha}Im c_1,\qquad \alpha\neq 0.
\end{equation}

Therefore, we can set our forcing terms $g^n$ to be
\begin{equation}\label{eq:forcing} g^n:=   \sum_{1\leq j\leq n-1}\sum_{\nu^{-1/4}<|\alpha_1+\alpha_2|\leq n\land  2\nu^{-1/4}}e^{i\alpha(x-\Re c_0 t)}\l \mbf{u}_{\alpha_1}^j\cdot \nabla_{\alpha_2}\r \omega^{n-j}_{\alpha_2},\end{equation}
so that \eqref{eq:idkn} will be satisfied. 

\begin{remark}\begin{enumerate}
\item If $n=1$, then $|\alpha|\leq n$ implies $|\alpha|\leq 1$. In fact, given our definition \eqref{eq:u1}, $\mbf{u}^1$ only consists of two modes with respective frequency $\alpha=\pm \alpha_0=\pm 1$. These satisfy $\mbf{u}^1_1=\overline{\mbf{u}^1_{-1}}$, and therefore $\psi^1_{1}=\overline{\psi^1_{-1}}$. 
Unfortunately, this property does not hold for higher values of $n$ because the Orr-Sommerfeld equation has complex coefficients.

\item Since $|\alpha|\leq n$, we have $g^n=0$ unless $n> \nu^{-1/4}$. Moreover, $g^n$ is only comprised of modes with $|\alpha|\leq 2\nu^{-1/4}$. 

\item If $\Im c_1 >0$, which is the instability condition, then $\Im c_{n,\alpha} = \frac{n}{\alpha}\Im c_1 >0$ for all $n>0$. Thus $\Im c_{n,\alpha}$ is bounded away from $0$, and therefore $|U_0-c_{n,\alpha}|$ is also bounded away from $0$ for all the $c_{n,\alpha}$ considered. As a result, we remain away from any so-called critical layer. Secondly, it is important to point out that $\psi_\alpha^n=0$ unless $|\alpha|\leq n|\alpha_0|=n$. This can be checked by induction on \eqref{eq:orreqn} given that $\psi_\alpha^1=0$ unless $\alpha =\pm \alpha_0.$
\end{enumerate}
\end{remark}
As for $\alpha =0$, the associated modes will satisfy a simple second-order equation (see section \ref{alpha00}).
Notice that the operator $\orr_{c_{n,\alpha},\nu}$ is invertible for all $(\alpha,n)\neq (\alpha_0,1)=(1,1)$, so the equation can be solved together with the boundary conditions. In addition, we know that the solution $\psi_{\alpha}^n$ belongs to $H^s(\mathbb{R}_+)$ for all $s\geq 0$. 

We then obtain that $\omega:=\sum_{n=1}^{\infty}\omega^n \tau^n$ solves \eqref{eq:idk3} with the forcing
\begin{equation}\label{eq:f}g:=\sum_{n=1}^{\infty}g^n \tau^n.\end{equation}

 \begin{lemma}\label{genlemma}
     Let $h$ be a smooth function on $\mathbb{R}\times \mathbb{R}_+$, let $\mbf{u}=(u,v)$ be a smooth divergence free function on $\mathbb{R}\times \mathbb{R}_+$, and let $\mbf{u}^n,\omega^n$ be defined as in \eqref{eq:psin}. Define
\begin{align*}
\mc{A}&:=(\text{Id} + \partial_{z_1}+\partial_{z_2})\gen_{\delta};\\
\mc{B}(\mbf{u},h)&:=\gen_0 u + \gen_0 v+\partial_{z_1}\gen_0u+\mc{A}h;\\
G^n&:=\mc{B}(\mbf{u}^n,\omega^n). \end{align*}
Then we have, for $z_2$ small enough (see \cite{greniernguyen}, Proposition 3.3) and for all $j,k\in \mathbb{N}$,
\begin{align}\label{eq:aineq2} G^n &\leq C\mc{A}(\omega^n)\\
\label{eq:aineq}\mc{A}\l \sum_{j=1}^{n-1}\mbf{u}^j\cdot \nabla \omega^{n-j}\r&\leq C\sum_{j=1}^nG^j (\partial_{z_1}+\partial_{z_2})G^{n-j}, \end{align}
for some $C>0$ independent from $n$.
 \end{lemma}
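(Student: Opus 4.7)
My plan is to handle the two inequalities separately. For \eqref{eq:aineq2}, the key observation is that $\mbf{u}^n = \nabla^\perp \psi^n$ and $\omega^n = \Delta \psi^n$ with the Dirichlet condition $\psi^n|_{y=0}=0$ (inherited from $v^n|_{y=0}=0$ for $\alpha\neq 0$ and from the ansatz \eqref{eq:alpha0} for $\alpha=0$). For each frequency $\alpha\neq 0$, I would recover $\psi^n_\alpha$ from $\omega^n_\alpha$ via the explicit Dirichlet Green function of $\Delta_\alpha$ on $\mathbb{R}_+$, whose integral kernel has $L^1_x$-norm of order $|\alpha|^{-2}$ and whose $y$-derivatives gain a factor of $|\alpha|$. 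Composing with $\partial_y$ or multiplying by $|\alpha|$ then produces $u^n_\alpha$ and $v^n_\alpha$, and the resulting convolution maps the weighted norms on $\omega^n$ into weighted norms on $\mbf{u}^n$, yielding $\gen_0 u^n + \gen_0 v^n + \partial_{z_1}\gen_0 u^n \leq C\,\mc{A}(\omega^n)$. For $\alpha=0$, Proposition \ref{alpha0} supplies the same bound directly.

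For \eqref{eq:aineq}, the main tool is the Banach-algebra structure of generator functions. The key bilinear estimate is
\begin{equation*}
\gen_\delta(fg)(z_1,z_2)\leq \gen_0 f(z_1,z_2) \cdot \gen_\delta g(z_1,z_2),
\end{equation*}
which follows from the pointwise weight inequality $w_{\ell,\delta}\leq \varphi^{\ell-k}w_{k,\delta}$, the Leibniz rule applied to $\partial_y^\ell(fg)$, and the convolution $(fg)_\alpha=\sum_\beta f_\beta g_{\alpha-\beta}$ together with $e^{z_1|\alpha|}\leq e^{z_1|\beta|}e^{z_1|\alpha-\beta|}$. Combining this with the identities $\gen_\delta(\partial_x h)=\partial_{z_1}\gen_\delta h$ and $\gen_\delta(\partial_y h)=\partial_{z_2}\gen_\delta h$, and decomposing $\mbf{u}^j\cdot\nabla\omega^{n-j}=u^j\partial_x\omega^{n-j}+v^j\partial_y\omega^{n-j}$, one obtains the bound $\gen_\delta(\mbf{u}^j\cdot\nabla\omega^{n-j})\leq C\,G^j\cdot (\partial_{z_1}+\partial_{z_2})G^{n-j}$. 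Distributing the outer $(\operatorname{Id}+\partial_{z_1}+\partial_{z_2})$ through the product via Leibniz, each additional derivative either lands on the velocity factor --- where it is absorbed into $G^j$ after using the identity $\partial_y u^j = \omega^j + \partial_x v^j$ together with Biot--Savart --- or onto the vorticity factor, contributing to $(\partial_{z_1}+\partial_{z_2})G^{n-j}$.

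The principal technical obstacle is the asymmetric weight system: the velocity appears in the unweighted $\gen_0$ norm, whereas the vorticity carries the $\delta$-weighted $\gen_\delta$ norm with $\delta\propto\nu^{1/4}$. This mismatch, dictated by the fact that $\mbf{u}^j$ does not vanish at the boundary while $\omega^j$ behaves as a boundary layer, is precisely what the definition of $\mc{B}$ is designed to accommodate. The identity that closes the bookkeeping is the pointwise inequality $\varphi(y)(\delta^{-1}e^{-y/\delta}+1)\leq 2$, which yields $\|\cdot\|_{\ell+1,0}\leq 2\|\cdot\|_{\ell,\delta}$ and allows one to trade a $y$-derivative of the velocity for a $\delta$-weight on the vorticity. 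With this trick in place, the argument proceeds essentially as in Proposition~3.3 of \cite{greniernguyen}.
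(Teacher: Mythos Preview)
Your approach is essentially the same as the paper's: for \eqref{eq:aineq2} you invoke the Dirichlet Green function of $\Delta_\alpha$ (the paper cites this as ``the generator function estimates on the Laplace operator'' from \cite{greniernguyen}), and for \eqref{eq:aineq} you use the bilinear algebra structure of generator functions together with the divergence-free relation, exactly as the paper does by splitting $\mc{A}$ into its three pieces and applying Proposition~3.2 of \cite{greniernguyen} to each.

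One statement in your sketch is incorrect as written and hides the actual difficulty: the relation $\gen_\delta(\partial_y h)=\partial_{z_2}\gen_\delta h$ is \emph{false}. Because $\|\cdot\|_{\ell+1,\delta}\leq \|\cdot\|_{\ell,\delta}$, one only has $\partial_{z_2}\gen_\delta h \leq \gen_\delta(\partial_y h)$, which is the wrong direction for your argument. The paper circumvents this by never estimating $\gen_\delta(\partial_y h)$ in isolation; instead it uses the specific bilinear bound
\[
\gen_\delta(v\,\partial_y h)\leq C\bigl(\gen_0 v + \partial_{z_1}\gen_0 u\bigr)\,\partial_{z_2}\gen_\delta h,
\]
where the divergence-free condition $\partial_y v=-\partial_x u$ (together with $v|_{y=0}=0$) is what supplies the extra factor of $\varphi$ needed to pass from the $\|\cdot\|_{\ell,\delta}$ weight to $\|\cdot\|_{\ell+1,\delta}$. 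You allude to this mechanism (``Biot--Savart'', the pointwise bound $\varphi(\delta^{-1}e^{-y/\delta}+1)\leq 2$), but you should be explicit that it enters already at the level of the basic $v\,\partial_y h$ estimate, not only when distributing the outer $\partial_{z_1},\partial_{z_2}$.
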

\begin{proof} To prove \eqref{eq:aineq2} we just need to show 
\begin{equation*}
    \gen_0 u^n + \gen_0 v^n + \partial_{z_1}\gen_0 u^n \leq C \l  \gen_{\delta}\omega^n + \partial_{z_1}\gen_{\delta}\omega^n +\partial_{z_2}\gen_{\delta}\omega^n\r.
\end{equation*}
This follows from the generator function estimates on the Laplace operator established in \cite{greniernguyen}. Note that the aforementioned estimates only rely on the Dirichlet boundary condition for the stream function, which is still satisfied in our framework.\\

To prove \eqref{eq:aineq}, recall that by Proposition 3.2 from \cite{greniernguyen}, we have, for any divergence-free flow field $(u,v)$ and scalar function $h$, and $|z_2|\leq 1$:
\begin{align}\label{eq:prop1}
    \gen_{\delta}(u\partial_x h)&\leq \gen_0 u\,\partial_{z_1}\gen_{\delta}h;\\
    \label{eq:prop2}
    \gen_{\delta}(v\partial_y h)&\leq C\l \gen_0 v+\partial_{z_1}u\r \partial_{z_2}\gen_{\delta}h,\qquad C>0.
\end{align}
By splitting $\mc{A}$ in its three components $$\mc{A}=\gen_{\delta}+\partial_{z_1}\gen_{\delta}+\partial_{z_2}\gen_{\delta},$$ we need to bound each of these terms by the right-hand side of \eqref{eq:aineq} for some positive constant $C>0$.  For the computations below we will mainly rely on \eqref{eq:prop1} and \eqref{eq:prop2}, but for the full details we refer the reader to the proof of Proposition 3.3 from \cite{greniernguyen}.
\begin{enumerate}
\item  First component: \begin{align*}\sum_{1\leq j\leq n-1}\gen_{\delta}(\mbf{u}^j\cdot \nabla \omega^{n-j})&\leq \sum_{1\leq j\leq n-1}\left[\gen_0u^j\partial_{z_1}+ C \l \gen_0 v^j+\partial_{z_1}\gen_0u^j\r\partial_{z_2}\right]\gen_{\delta}\omega^{n-j} \\
&\leq \sum_{1\leq j\leq n-1}\mc{B}(\mbf{u}^j,\omega^j)(\partial_{z_1}+\partial_{z_2})\mc{B}(\mbf{u}^{n-j},\omega^{n-j})\\
&=\sum_{1\leq j\leq n-1}G^j(\partial_{z_1}+\partial_{z_2})G^{n-j}.
\end{align*}
\item Second component: 
    \begin{align*}\sum_{1\leq j\leq n-1}\partial_{z_1}\gen_{\delta}(\mbf{u}^j\cdot \nabla \omega^{n-j})&\leq \sum_{1\leq j\leq n-1}\left[\partial_{z_1}\gen_0 u^j\partial_{z_1}\gen_{\delta}\omega^{n-j}+\gen_0 u^j\partial_{z_1}^2\gen_{\delta}\omega^{n-j}\right]\\
    & + C\sum_{1\leq j\leq n-1}\l \partial_{z_1}\gen_0v^j+\partial_{z_1}^2\gen_0u^j\r \partial_{z_2}\gen_{\delta}\omega^{n-j}\\ 
    &+ C\sum_{1\leq j\leq n-1}\l \gen_0v^j+\partial_{z_1}\gen_0u^j\r \partial^2_{z_2z_1}\gen_{\delta}\omega^{n-j}\\
    &\leq \sum_{1\leq j\leq n-1}G^j\partial_{z_1}G^{n-j}+C\sum_{1\leq j\leq n-1}\partial_{z_1}G^j G^{n-j}+C\sum_{1\leq j\leq n-1}G^j\partial_{z_2}G^{n-j}\\
   & \leq (C+1)\sum_{1 \leq j\leq n-1}G^j(\partial_{z_1}+\partial_{z_2})G^{n-j}.
\end{align*}
\item Third component: 
\begin{align*}
\sum_{1\leq j\leq n-1}&\partial_{z_2}\gen_{\delta}\l \mbf{u}^j\cdot \nabla \omega^{n-j}\r\leq \sum_{1\leq j\leq n-1}\l \partial_{z_2}\gen_0 u^j \partial_{z_1}\gen_{\delta}\omega^{n-j}+C_0 \gen_0u^j\partial^2_{z_1z_2}\gen_{\delta}\omega^{n-j}\r\\
&+ \sum_{1\leq j\leq n-1}\l \partial_{z_1}\gen_0 u^j\partial_{z_2}\gen_{\delta}\omega^{n-j}+C_0 \l \gen_0 v^j+\partial_{z_1}\gen_0 u^j\r \partial_{z_2}^2\gen_{\delta}\omega^{n-j}\r\\
& \leq \sum_{1\leq j\leq n-1}\l \partial_{z_2}G^{j}G^{n-j}+ C_0 G^j \partial_{z_1}G^{n-j}+ G^j \partial_{z_2}G^{n-j}+ C_0 G^j \partial_{z_2}G^{n-j} \r\\
&\leq 2C_0 \sum_{1\leq j\leq n-1}G^j (\partial_{z_1}+\partial_{z_2})G^{n-j}.
\end{align*}
\end{enumerate}
Therefore, each of the three terms defining $\mc{A}$ satisfies \eqref{eq:aineq}, and hence their sum also does, after adapting the constant $C$ in the right hand side.
\end{proof}
 
\begin{prop} Let $\omega^n$ and $g^n$, $n\in \mathbb{N}_{\geq 1}$ be defined as above. Then the series
$$\sum_{n=1}^{\infty}\omega^n\tau^n,\qquad \sum_{n=1}^{\infty}g^n\tau^n$$
converge in the sense of generator functions if $\tau=\nu^{N_s}e^{\Im c_0 t}$ is smaller than some positive constant independent from $\nu$. 
\end{prop}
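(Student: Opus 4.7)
The strategy is to set up a scalar recurrence for the generator-function norms $G^n := \mc{B}(\mbf{u}^n,\omega^n)$ of Lemma \ref{genlemma}, and control the formal power series $\mc{G}(\tau;z_1,z_2) := \sum_{n\geq 1} G^n(z_1,z_2)\tau^n$ by comparison with an explicit majorant. For each Fourier mode $\psi_\alpha^n$ with $\alpha\neq 0$, equation \eqref{eq:orreqn} is an Orr-Sommerfeld problem with parameter $c_{n,\alpha}=\Re c_1+i(n/\alpha)\Im c_1$; since $|\alpha|\leq n$, the lower bound $|\Im c_{n,\alpha}|\geq \Im c_1$ holds uniformly in $n$ and $\alpha$, keeping us away from any critical layer. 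When $(c_{n,\alpha},\nu)$ happens to be an Orr-Sommerfeld eigenvalue I invert through the pseudo-inverse $\tilde{\orr}_{c_{n,\alpha},\nu}^{-1}$: Proposition \ref{projbound} controls the projection $P_{c_{n,\alpha},\nu}$ on generator functions, and Corollary \ref{genbound} then applies to the projected source, so in either case one obtains a bound of the form $\mc{A}(\omega_\alpha^n)\leq C(1+|\Im c_{n,\alpha}|)^{-1}\mc{A}(R_\alpha^n)$, where $R_\alpha^n$ denotes the right-hand side of \eqref{eq:orreqn}. The zero mode $\psi_0^n$ is handled analogously by Proposition \ref{alpha0}. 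Summing over $\alpha$, invoking \eqref{eq:aineq2}, and using the uniformity of the prefactor in $n,\alpha,\nu$ yields
\begin{equation*}
G^n \leq C_0\,\mc{A}\l\sum_{j=1}^{n-1}\mbf{u}^j\cdot\nabla\omega^{n-j}\r,
\end{equation*}
for a constant $C_0$ independent of $n$ and $\nu$.

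Applying now the convolution estimate \eqref{eq:aineq} upgrades this into the Cauchy-product recurrence
\begin{equation*}
G^n \leq C_1 \sum_{j=1}^{n-1} G^j (\partial_{z_1}+\partial_{z_2}) G^{n-j},\qquad n\geq 2,
\end{equation*}
with base case $G^1\leq C_2$ provided by the normalisation \eqref{eq:normaliz} together with the boundary-layer scaling of Remark \ref{genscale}. Multiplying by $\tau^n$ and summing over $n\geq 2$ yields the quadratic functional inequality
\begin{equation*}
\mc{G}(\tau;z_1,z_2)-G^1\tau \leq C_1\,\mc{G}(\tau;z_1,z_2)\,(\partial_{z_1}+\partial_{z_2})\mc{G}(\tau;z_1,z_2).
\end{equation*}
Following \cite{greniernguyen}, one closes this by constructing an explicit majorant of the form $\mc{M}(\tau;z_1,z_2)=A\tau/(1-B(z_1+z_2)-D\tau)$ for suitably chosen positive constants $A,B,D$: $\mc{M}$ satisfies the corresponding equation with equality, and a term-by-term comparison in $\tau$ yields $G^n\leq M^n$, where the $M^n$ are the Taylor coefficients of $\mc{M}$ in $\tau$. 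Finiteness of $\mc{M}$ in a neighbourhood of the origin then implies convergence of $\sum_n \omega^n\tau^n$ in the sense of generator functions, for all $\tau$ smaller than some positive constant independent of $\nu$.

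The forcing series $\sum_n g^n\tau^n$ is handled by the same argument: \eqref{eq:forcing} shows that each $g^n$ is a partial sum of the nonlinear term $\sum_j\mbf{u}^j\cdot\nabla\omega^{n-j}$ restricted to the truncated frequencies $|\alpha_1+\alpha_2|>\nu^{-1/4}$, so $\mc{A}(g^n)$ obeys the same convolution bound and the resulting series is dominated by $C\,\mc{G}(\tau)\,(\partial_{z_1}+\partial_{z_2})\mc{G}(\tau)$. The main delicate step will be ensuring that all constants coming from inverting the Orr-Sommerfeld operator — in particular the prefactor $1/(1+|\Im c_{n,\alpha}|)$ and the projector constant from Proposition \ref{projbound} — remain uniform in $n,\alpha$ and in $\nu\to 0^+$; this is guaranteed by the uniform lower bound $|\Im c_{n,\alpha}|\geq \Im c_1$ and by the fact that each $c_{n,\alpha}$ stays in a fixed bounded subset of the upper half-plane lying away from the range of $U_0$.
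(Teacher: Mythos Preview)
Your scheme is the same as the paper's, but there is a genuine gap in the recurrence you derive. You write
\[
G^n \leq C_1 \sum_{j=1}^{n-1} G^j\,(\partial_{z_1}+\partial_{z_2}) G^{n-j},
\]
using only the \emph{uniform} lower bound $|\Im c_{n,\alpha}|\geq \Im c_1$. The paper instead obtains
\[
G^n \leq \frac{C}{n}\sum_{j=1}^{n-1} G^j\,(\partial_{z_1}+\partial_{z_2}) G^{n-j},
\]
and the factor $1/n$ is essential. It does not come from the lower bound on $|\Im c_{n,\alpha}|$ alone; it arises from the \emph{product} of the $1/|\alpha|$ prefactor in the right-hand side of \eqref{eq:orreqn} with the gain $1/(1+|\Im c_{n,\alpha}|)$ in Corollary \ref{genbound} (and the analogous $1/(1+n\Im c_1)$ in Proposition \ref{alpha0}):
\[
\frac{1}{|\alpha|\,(1+|\Im c_{n,\alpha}|)}=\frac{1}{|\alpha|+n\Im c_1}\leq \frac{1}{n\Im c_1}.
\]
By invoking only ``uniformity of the prefactor'' you have discarded precisely this gain.

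Without the $1/n$ the closure fails: each application of $(\partial_{z_1}+\partial_{z_2})$ is a genuine loss of one analyticity radius, and a convolution recurrence with uncompensated derivative loss produces factorial growth of the coefficients. Concretely, your proposed majorant $\mc{M}=A\tau/(1-B(z_1+z_2)-D\tau)$ does \emph{not} satisfy the functional equation $\mc{M}-M^1\tau=C_1\,\mc{M}\,(\partial_{z_1}+\partial_{z_2})\mc{M}$ with equality: the left-hand side has a single pole in $1-Bz-D\tau$ while the right-hand side has a triple pole, so the term-by-term comparison you invoke cannot hold. With the $1/n$ factor restored, multiplying by $n\tau^{n-1}$ (or $\tau^{n-2}$, depending on normalisation) and summing yields the Burgers-type differential inequality $\partial_\tau\mc{G}\leq C\,\mc{G}\,(\partial_{z_1}+\partial_{z_2})\mc{G}$, which is exactly what the paper (following \cite{greniernguyen}, Section~6.3) closes for small $\tau$ uniformly in $\nu$.
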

\begin{proof}We just need to connect the estimates \eqref{eq:aineq2} and \eqref{eq:aineq}. To do this, we apply Corollary \ref{genbound} (for $\alpha\neq 0$) and Proposition \ref{alpha0} (for $\alpha=0$) to \eqref{eq:orreqn}, recalling that $\nabla_{\alpha}\psi_{\alpha}^n=((\psi_{\alpha}^n)',i\psi_{\alpha}^n)=\mbf{u}_{\alpha}^n$ and $\Delta_{\alpha}\psi_{\alpha}^n=\omega_{\alpha}^n$ by definition.  For $j=1,\dots, n$, let $G^j:=\mc{B}(\mbf{u}^j,\omega^j)$ following the notation from Lemma \ref{genlemma}.  Then for $n\geq 2$, by Lemma \ref{genlemma} we have
$$G^n \leq C\mc{A}(\omega^n)\leq \frac{C_0}{n}\mc{A}\l \sum_{1\leq j\leq n-1}\mbf{u}^j\cdot \nabla \omega^{n-j}\r\leq \frac{C}{n}\sum_{1\leq j\leq n-1}\l G^j \l  \partial_{z_1}+\partial_{z_2}\r G^{n-j}\r, $$
from which it follows that the series
$$\sum_{n=1}^{\infty}G^n(z_1,z_2)\tau^{n-1} $$
converges for $\tau,z_1,z_2$ small enough.\\ 

Now recall that
\begin{align*}
    \gen_{\delta}(u^j\partial_x \omega^k)&\leq \gen_0(u^j)\partial_{z_1}\gen_{\delta}(\omega^k);\\ 
    \gen_{\delta}(v^j\partial_y \omega^k)&\leq C\l \gen_0(v^j)+\partial_{z_1}\gen_0(u^j)\r\partial_{z_2}\gen_{\delta}(\omega^k).
\end{align*}
As a result, for some $C>0$ we have
$$\gen_{\delta}(u^j \cdot \nabla \omega^k)\leq (C+1)G^jG^k. $$
Therefore,
$$\gen_{\delta}\l \sum_{n=1}^{\infty}g^n\tau^n \r \leq \tau^{\nu^{-1/4}}\gen_{\delta}\l \sum_{n=1}^{\infty}g^n \tau^{n-\nu^{-1/4}}\r\leq \tau^{\nu^{-1/4}}(C+1)C^2\sum_{n>\nu^{-1/4}}^{\infty}\tau^{n-\nu^{-1/4}}\leq C\tau^{\nu^{-1/4}}. $$
We conclude that the series $\sum_{n=1}^{\infty}\tau^ng^n $ converges in the space of functions with $\gen_{\delta}<\infty$ for $z_1,z_2$ small enough.
\end{proof}

 \subsection{Time dependent shear flow}\label{timedep}
 We now complete the construction started in the previous subsection by considering the time and viscosity dependence of the shear flows. This generates some higher order terms in $\nu$ and $t$. 
 
By Proposition 2.1 and Proposition 2.2 from \cite{prevpaper}, $S\mbf{w}$ is smooth and bounded, uniformly in $\nu$ and $t\geq 0$ in all spatial $H^s$ norms. For this, the assumption that $U_0^{(k)}(0)=0$ for all $k\in\mathbb{N}$ is necessary.  In the no-slip case, it suffices to assume that $U_0^{(2k)}(0)=0$ for all $k\in \mathbb{N}$; in both cases, this corresponds to enforcing the compatibility conditions for all orders (see the Appendix).

Recall that the shear flows $U^{\nu}$ satisfy the heat equation \eqref{eq:shflows}. For each integer $M>0$ we can then write, for $\gamma >1/2,$ the following Taylor expansion near $\sqrt{\nu}t=0$:
 \begin{equation}\label{eq:flowexp}
     U^{\nu}(\sqrt{\nu}t,y)=U_0(y)+  \sum_{k=1}^M(\nu^{1/2}t)^kU_0^{(2k)}(y) + R_U^{\nu}(t,y),\qquad R_U^{\nu}(t,y)=O((\nu^{1/2}t)^{M+1}).
 \end{equation}
 Recall that $U^{\nu}(\sqrt{\nu}t,y)$, and hence $R_U^{\nu}(t,y)$, is bounded in any $H^s$ norm, uniformly as $\nu\to 0^+$. This follows from Proposition 2.1 and 2.2 in \cite{prevpaper}. We will keep the value of $M>0$ from \eqref{eq:flowexp} fixed until the end of the Section.
 
 Consider the vorticity formulation of the Navier-Stokes equations:
 \begin{equation}\label{eq:nsvort}\begin{cases}
     \partial_t \omega +(\mbf{U}_0\cdot \nabla)\omega+(\mbf{u}\cdot \nabla)U_0'+(\mbf{u}\cdot \nabla)\omega-\sqrt{\nu}\Delta \omega =  \nu^{2^{-\Lambda}}S\omega +g ;\\ 
     v=0 & y=0; \\
     \omega = \nu^{1/2-\gamma}\partial_{y}\omega & y=0;
     \end{cases}
 \end{equation}
 where
 $$ S\omega=-\frac{U^{\nu}(\sqrt{\nu}t,y)-U_0(y)}{\nu^{2^{-\Lambda}}}\partial_x\omega -(\mbf{u}\cdot \nabla)\frac{\partial_y U^{\nu}(\sqrt{\nu}t,y)-U_0'(y)}{\nu^{2^{-\Lambda}}}.$$
We are looking for a solution of the form
\begin{equation}\label{eq:sol} \psi(t,x,y)= \sum_{n\geq 1}\sum_{p=0}^{2M}\sum_{q=0}^M\sum_{|\alpha| \leq n\land \nu^{-1/4}}\tau^n t^p \nu^{q2^{-\Lambda}}e^{i\alpha(x-\Re c_1t)}\psi_{\alpha}^{n,p,q}(y),\end{equation}
 where $\mbf{u}^{\nu}:=\nabla^{\perp}\psi$ is such that $\mbf{u}^{\nu}(t,x,y)+\mbf{U}^{\nu}(\sqrt{\nu}t,y) $ solves the Navier-Stokes equations with the Navier boundary condition \eqref{eq:ns2}, or equivalently $\omega:=\Delta\psi$ satisfies \eqref{eq:nsvort}. This will be the family of solutions displaying the instability of our main result \autoref{main}.
 
 Substituting \eqref{eq:sol} into the Navier-Stokes equations and equating terms with the same order 
 $t^p\nu^{q2^{-\Lambda}}e^{n\Im c_0 t}$, we obtain the equations
 \begin{equation}\label{eq:recursfinal}
     i\alpha \orr_{c_{n,\alpha},\nu}(\psi_{\alpha}^{n,p,q})+(p+1)\psi_{\alpha}^{n,p+1,q}=Q^{n,p,q}+L^{n,p,q},
 \end{equation}
 for $n\geq 1 ,0\leq q\leq M,0 \leq p\leq 2M$ where
 \begin{align*}
     Q^{n,p,q}&=\sum_{|\alpha'|\leq n\land \nu^{-1/4}}\sum_{1\leq j\leq n-1}\sum_{0\leq k\leq p}\sum_{0\leq l\leq q}(\mbf{u}_{\alpha-\alpha'}^{j,k,l}\cdot \nabla_{\alpha'})\omega_{\alpha'}^{n-j,p-k,q-l};\\
     L^{n,p,q}&= i\alpha \sum_{k=1}^M U_0^{(2k)}\omega_{\alpha}^{n,p-k,q-2^{\Lambda-1}k}-i\alpha \sum_{k=1}^M \psi_{\alpha}^{n,p-k,q-2^{\Lambda-1}k}U_0^{(2k+2)},
 \end{align*}
 and $c$ is defined as in \eqref{eq:cdef}. However, the terms with powers strictly higher than $M$ in $t$ or $\nu^{2^{-\Lambda}}$ (including the remainder from \eqref{eq:flowexp}) will not appear in any of the equations \eqref{eq:recursfinal}, and will instead end up in a remainder:
\begin{align*}R^n_{\alpha}(t,y):&=\sum_{|\alpha'|\leq n\land \nu^{1/4}}\sum_{1\leq j\leq n-1} \sum_{p_1,p_2< 2M}(\star)+ i\alpha R^{\nu}_U(t,y)\partial_x\omega_{\alpha}-i\alpha \psi_{\alpha}\partial_{yy} R^{\nu}_U(t,y),\\
 (\star)&=\begin{cases} \displaystyle \sum_{q_1+q_2> M}^{q_1,q_2\leq M}{\nu}^{(q_1+q_2)2^{-\Lambda}}t^{p_1+p_2}(\mbf{u}_{\alpha-\alpha'}^{j,p_1,q_1}\cdot \nabla_{\alpha'})\omega_{\alpha'}^{n-j,p_2,q_2}& |\alpha|\leq n\land \nu^{-1/4};\\
  \displaystyle \sum_{q_1,q_2\leq M}{\nu}^{(q_1+q_2)2^{-\Lambda}}t^{p_1+p_2}(\mbf{u}_{\alpha-\alpha'}^{j,p_1,q_1}\cdot \nabla_{\alpha'})\omega_{\alpha'}^{n-j,p_2,q_2} & \nu^{-1/4}<|\alpha| \leq n\land 2\nu^{-1/4}.
 \end{cases}
\end{align*}
As usual, we define
\begin{equation} \label{eq:remainder}R^n(t,x,y)= \sum_{|\alpha|\leq n\land 2\nu^{-1/4}}e^{i\alpha (x-\Re c_0 t)}R^n_{\alpha}(t,y), \qquad R=\sum_{n=1}^{\infty} \tau^n R^n, \end{equation}
so that the forcing $g$ from \eqref{eq:nsvort} is given by $g=\Delta R$.

 Notice that while we could truncate the sum for $p$ and $q$ in our ansatz \eqref{eq:sol}, we cannot do so for $n$. Indeed, while $\tau$ can be made arbitrarily small, it cannot be asymptotically smaller than $\nu^{N_s}$. On the other hand, taking $M>2^{\Lambda}N_s$,  we obtain a factor of $\nu^{N_s}$ in the terms above. As for $p$, we shall prove that if $\psi_{\alpha}^{n,p,q}\neq 0$ then $p\leq 2q$. Hence all terms with $p> 2M$ will vanish. Moreover, if $q_1+q_2\leq M$, then $p_1+p_2< 2(q_1+q_2)\leq 2M$, so those terms will not appear in the remainder.

 Throughout this discussion we assume, by simplicity, that $\alpha_0$ is the only integer such that $c_0$ is a Rayleigh eigenmode, and that $c_{n,\alpha}$ (as defined in \eqref{eq:cdef} is not an eigenmode for any $n> 1$.
 
 To find all the terms $\psi_{\alpha}^{n,p,q}$, we first claim that the following ansatz is consistent with equation \eqref{eq:recursfinal}:
 $$\psi_{\alpha}^{n,p,q}=0 \qquad \forall p> 2q.
  $$
 
 To see why this is the correct range, notice that the terms in $Q^{n,p,q}$ are products of terms with indexes $(j,k,l)$ and $(n-j,p-k,q-l)$, with $1\leq j\leq n-1$. Now supposing these terms satisfy the above claim, we can deduce that $Q^{n,p,q}=0$ for $p> 2q$. Indeed, the non-zero products must satisfy both $k\leq 2l$ and
 $$2q-k < p-k\leq 2(q-l) \iff k>2l, $$
 which is impossible. 
 
 As for $L^{n,p,q}$, it contains terms with indexes $(n,p-k,q-2^{\Lambda-1}k)$ for $k\geq 1$. Suppose these lower order terms satisfy the claim. Then for $p>2q$,  the non-zero terms will satisfy
 $$ 2q -k< p-k \leq  2(q-2^{\Lambda-1} k)\iff k<0, $$
 which is impossible. In conclusion, $Q^{n,p,q}=L^{n,p,q}=0$ in the specified range if the claim is true for lower order terms in at least one between $n$ and $q$.  This implies that we can prove the claim by induction over $(n,q)$, once we have dealt with the base cases $q=0$ and $n=1$.
 \begin{itemize}
     \item \uline{$q=0$}. In this case, $L^{n,p,0}=0$ by construction. Moreover, $Q^{n,p,0}$ consists of terms with indexes $(n',p',0)$ where $n'<n$. For $n=1$, in particular we have $Q^{1,p,0}=0$. We can then set $\psi_{\alpha_0}^{1,1,0}=0$ because $\orr_{c,\nu}\psi_{\alpha_0}^{1,0,0}=0$. Suppose now the claim is true for $n'\leq n$. Then $Q^{n,p,0}=0$ for all $p>0$. Hence the claim is consistent for all $p>0$, recalling that for $n>1$ the Orr-Sommerfeld operator is invertible.
     \item \uline{$n=1$}. In this case, $Q^{1,p,q}=0$ by construction. Moreover, $L^{1,p,q}$ consists of terms with indexes $(1,p-k,q-k)$, for $k\geq 1$. The case $q=0$ has been treated above, so suppose the claim is true for $q'<q$, $q\geq 1$. Let $p\geq 2q$. Then the non-zero terms of $L^{1,p,q}$  must satisfy $2q-k\leq p-k\leq 2(q-2^{\Lambda-1}k)$, but this is impossible for $k\geq 1$. Thus $L^{1,p,q}=0$, and the equation is consistent. For $p=2q-1\geq 1$, we get $L^{1,2q-1,q}\neq 0$ (choosing $k=1$). If $\alpha\neq \alpha_0$, since the Orr-Sommerfeld operator is invertible, the equation can then be subsequently solved for $\psi^{1,p,q}$ for all $p\leq 2q-1$. Otherwise if $\alpha=\alpha_0$, the Orr-Sommerfeld operator is non-invertible, so we define $$\psi^{1,2q-1,q}:=\tilde{\orr}_{c_{n,\alpha},\nu}^{-1}(L^{1,2q-1,q})$$ and, if $\phi\mapsto \lambda_{c,\nu}(\phi)$ denotes the quantity defined in \eqref{eq:proj}, then $$2\psi^{1,2q,q}:=\lambda_{c,\nu}(L^{1,2q-1,q})\lambda_{c,\nu}(\phi_{c_1,\nu})^{-1}\phi_{c_1,\nu}, $$ where $\phi_{c_1,\nu}$ is the Orr-Sommerfeld eigenmode. Since then $\orr_{c_{n,\alpha},\nu}\psi^{1,2q,q}=0$, this is consistent with $L^{1,2q,q}=0$. The terms for $p<2q-1$ may be derived by progressively solving Orr-Sommerfeld equations, using the pseudo-inverse. Proposition \ref{projbound} will allow us to control the size of the solutions each time the pseudo-inverse is applied.
 \end{itemize}

\begin{remark}
    The case $n=1$ above is where we have finally used the assumption that $\phi_{c,\nu}\notin \Im \orr_{c,\nu}$.
    It is also where the condition $p>2q$ for the ansatz $\psi_{\alpha}^{n,p,q}=0$ becomes necessary. If we used $p>q$ for the ansatz instead, then we would have to make an exception for the case $\alpha=\pm \alpha_0$,  since evidently e.g. $\psi_{\alpha_0}^{1,2,1}\neq 0$. However, it also follows that $\psi_{2\alpha_0}^{2,2,1}\neq 0$, even though $2\alpha_0\neq \pm \alpha_0$ and $p=2>1=q$.  The inconsistency caused by this cascade effect to higher order terms can only be avoided with the condition $p>2q$. 
\end{remark}

Let us now show that all the modes can be actually derived. We have already shown this for $q=0$ and $n=1$. We proceed by induction over $(n,q)$. Suppose we have constructed $\psi_{\alpha}^{n',p,q'}$ for all $n'\leq n,q'\leq q$ and for all $p$. We construct $\psi_{\alpha}^{n+1,p,q'}$ for $q'\leq q+1$ and $\psi_{\alpha}^{n',p,q+1}$ for $n'\leq n+1$. In the first case, we proceed by induction over $q'$. We have already constructed all the $\psi_{\alpha}^{n+1,p,0}$. Suppose we have constructed $\psi_{\alpha}^{n+1,p,q'}$ for $q'< q_0$. For $p>2q_0$ we know that $\psi_{\alpha}^{n+1,p,q_0}=0$. $Q^{n+1,p,q_0}$ and $L^{n+1,p,q_0}$ contain terms which are lower order in $n$ or $q_0$ respectively, so these all vanish by inductive assumption. Hence the equations in this range reduce to $0=0$. Now consider $p=2q_0$. Then $Q^{n,p,q}+L^{n,p,q}\neq 0$ and we can define $\psi_{\alpha}^{n,p,q}:=\orr_{c,\nu}^{-1}(Q^{n,p,q}+L^{n,p,q})$, recalling that $n\geq 2$ and hence the Orr-Sommerfeld operator is invertible. For $\psi_{\alpha}^{n',p,q+1}$ the procedure is the same. This completes the induction.
  
  \subsubsection{Bounds}

 We now derive bounds for the generator functions of the solutions. Recall that the sums for $p$ and $q$ are truncated to $2M$ and $M$ respectively, where $M$ is a fixed quantity. Therefore, the bounds need only be uniform in $n$, not $p$ or $q$. We use $C_{p,q}$ to indicate a constant that may depend on $p$ and $q$ but not on $n$.  We define
 $$\psi^{n,p,q}(t,x,y):=\sum_{|\alpha|\leq \nu^{-1/4}\land n} e^{i\alpha(x-\Re c_1t)}\psi_{\alpha}^{n,p,q}(y),\quad \mbf{u}^{n,p,q}:=\nabla^{\perp}\psi^{n,p,q};\quad \omega^{n,p,q}:=\Delta \psi^{n,p,q},$$
 \begin{align*}G^{n,p,q}&:=\mc{B}(\mbf{u}^{n,p,q},\omega^{n,p,q})\\
&=\gen_0(u^{n,p,q})+\gen_0(v^{n,p,q})+\partial_{z_1}\gen_0(u^{n,p,q})+\gen_{\delta}(\omega^{n,p,q})+\partial_{z_1}\gen_{\delta}(\omega^{n,p,q})+\partial_{z_2}\gen_{\delta}(\omega^{n,p,q}). \end{align*}
Notice that, by equation \eqref{eq:recursfinal} as well as Remark \ref{genscale} (recall that $\delta \propto\nu^{1/4}$), each $G^{n,p,q}$ is continuous in $\nu$, converging to a non-zero quantity as $\nu\to 0^+$ if $p\leq 2q$.

We wish to prove the following result.

 \begin{prop}\label{genconv}Let
\begin{equation}G^n:=\sum_{q=0}^M\sum_{p=0}^{2q} t^p\nu^{q2^{-\Lambda}}G^{n,p,q} .
\end{equation}
     Then the series of positive real numbers
     \begin{equation}\label{eq:mainseries}
         \sum_{n=1}^{\infty}\tau^{n-1}G^n\qquad \text{ and }\qquad \sum_{n=1}^{\infty}\tau^{n-1}R^n
     \end{equation}
     converge for all $\tau <C_{\infty}$, for some $C_{\infty}>0$ independent from $\nu$. Furthermore, the value of each sum is continuous in $\nu$.
 \end{prop}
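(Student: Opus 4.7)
The plan is to reduce the multi-parameter recursion in $(n,p,q)$ to the single-parameter recursion already treated in the previous proposition. Since $0 \leq p \leq 2M$ and $0 \leq q \leq M$ are bounded, each inductive step introduces at most a constant $C_M$ independent of $n$, so the essential convergence mechanism is unchanged. First, I would apply Corollary \ref{genbound} to each instance of \eqref{eq:recursfinal} with $\alpha \neq 0$, and Proposition \ref{alpha0} for $\alpha = 0$, which give
$$G^{n,p,q} \leq \frac{C}{1+|\Im c_{n,\alpha}|}\mc{A}\bigl(Q^{n,p,q}+L^{n,p,q}-(p+1)\psi^{n,p+1,q}\bigr).$$
The only subtlety is the case $(n,\alpha)=(1,\pm\alpha_0)$, where $\orr_{c_1,\nu}$ is not invertible; there one inverts via $\tilde{\orr}^{-1}_{c_1,\nu}$, which first projects the forcing by $P_{c_1,\nu}$, and Proposition \ref{projbound} gives a bound on $P_{c_1,\nu}$ in generator functions that is uniform in $\nu$. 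A descending induction on $p$ starting from the ansatz $\psi^{n,2q+1,q}\equiv 0$ (so the $(p+1)\psi^{n,p+1,q}$ contribution is either zero or already controlled) eliminates that term at each step.

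Next, I would bound the forcing. For $Q^{n,p,q}$ the convolutional estimate \eqref{eq:aineq} of Lemma \ref{genlemma} yields
$$\mc{A}(Q^{n,p,q}) \leq C\!\!\sum_{1 \leq j \leq n-1}\sum_{0\leq k \leq p}\sum_{0\leq\ell\leq q}\!\! G^{j,k,\ell}(\partial_{z_1}+\partial_{z_2})G^{n-j,p-k,q-\ell}.$$
For $L^{n,p,q}$, the condition \eqref{eq:uscond}$_3$ ensures that multiplication by $U_0^{(2k)}$ or $U_0^{(2k+2)}$ acts boundedly on $\gen_\delta$ for $z_2$ small (the relevant series $\sum \|\partial_y^k U_0\|_\infty z_2^k/k!$ converges), producing $\mc{A}(L^{n,p,q})\leq C_M \sum_{k\geq 1} G^{n,p-k,q-2^{\aleph-1}k}$. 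Combining these, the induction on $(n,q)$ together with the descending induction on $p$ leaves only lower-$n$ contributions, giving $G^{n,p,q}\leq C_{p,q}\sum_{1\leq j\leq n-1}G^j(\partial_{z_1}+\partial_{z_2})G^{n-j}$. Absorbing the factors $t^p\nu^{q 2^{-\aleph}}$ (bounded on any fixed time interval) and the finitely many $(p,q)$ pairs into a single constant $C$ produces
$$G^n \leq C\sum_{j=1}^{n-1}G^j(\partial_{z_1}+\partial_{z_2})G^{n-j},$$
which is exactly the recursion solved in the time-independent case.

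At this point, setting $H(\tau,z_1,z_2) := \sum_{n\geq 1}\tau^{n-1}G^n$, the recursion becomes $H \leq G^1 + C\tau\, H(\partial_{z_1}+\partial_{z_2})H$, and the analytic-majorant / implicit-function argument from \cite{greniernguyen} gives convergence of the series for $\tau$, $z_1$, $z_2$ sufficiently small, with a radius $C_\infty$ independent of $\nu$. For the remainder series I would observe that each term in $R^n$ carries either an extra $\nu^{(M+1)2^{-\aleph}}$ factor (when $q_1+q_2>M$), or is restricted to frequencies $|\alpha|>\nu^{-1/4}$ (producing a gain of $\tau^{\nu^{-1/4}}$ by the same Cauchy product estimate), or involves $R_U^\nu = O((\sqrt{\nu}t)^{M+1})$; any one of these yields a uniform smallness that allows the same recursion to control $\sum \tau^n R^n$. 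Continuity in $\nu$ then follows from the continuity in $\nu$ of each $G^{n,p,q}$ (via Remark \ref{genscale} and standard continuity of solutions to \eqref{eq:recursfinal}) together with the uniform convergence of the series just established.

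The main obstacle will be the interaction between the combined $(n,q)$/descending-$p$ induction and the non-invertibility of the Orr-Sommerfeld operator at $(n,\alpha)=(1,\pm\alpha_0)$. The pseudo-inverse introduces an unavoidable correction by $\lambda_{c_1,\nu}\phi_{c_1,\nu}$ which, if not bounded uniformly, could propagate through the recursion and destroy the geometric decay of the coefficients; it is precisely the generator-function bound of Proposition \ref{projbound}, together with the restriction $|\alpha|\leq \nu^{-1/4}$, that prevents this and keeps the constant $C$ in the master recursion independent of $\nu$.
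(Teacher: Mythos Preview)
Your overall architecture matches the paper's, but there is a real gap: you have dropped the factor $1/n$ that is essential for closing the recursion. In the paper the bound coming from Corollary \ref{genbound} and Proposition \ref{alpha0} is not just $C/(1+|\Im c_{n,\alpha}|)$ applied to the forcing; one must also remember that the forcing in \eqref{eq:recursfinal} carries a $1/(i\alpha)$, and since $|\alpha|\le n$ and $\Im c_{n,\alpha}=n\Im c_1/|\alpha|$, the combination
\[
\frac{1}{|\alpha|\,(1+|\Im c_{n,\alpha}|)}\ \le\ \frac{1}{|\alpha|+n\Im c_1}\ \le\ \frac{1}{n\Im c_1}
\]
is what produces the $n$ on the left of the paper's inequality \eqref{eq:gnpq}. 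This factor survives the descending induction on $p$ (that is the content of \eqref{eq:gnpq2}) and yields the master recursion $G^n\le \tfrac{C}{n}\sum_{j=1}^{n-1}G^j(\partial_{z_1}+\partial_{z_2})G^{n-j}$. Multiplying by $n\tau^{n-1}$ and summing then gives the \emph{differential} inequality $\partial_\tau G_N\le C\,G_N(\partial_{z_1}+\partial_{z_2})G_N$, which is precisely the Burgers/Cauchy--Kovalevskaya form that the argument of \cite{greniernguyen} closes by shrinking the analyticity domain in $(z_1,z_2)$ as $\tau$ grows.

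Your version $G^n\le C\sum_{j}G^j(\partial_{z_1}+\partial_{z_2})G^{n-j}$ and the resulting \emph{algebraic} inequality $H\le G^1+C\tau\,H(\partial_{z_1}+\partial_{z_2})H$ do not fit that scheme: the right-hand side loses one $(z_1,z_2)$--derivative and there is no $\partial_\tau$ on the left to trade against it, so the implicit-function/majorant argument you invoke does not apply as stated. (If there were no derivative, a Catalan-number bound would suffice; it is exactly the presence of $(\partial_{z_1}+\partial_{z_2})$ that makes the $1/n$ indispensable.) Your descending-$p$ induction and your $(n,q)$ induction are both compatible with keeping the $1/n$ if you track it---you simply need to carry it through explicitly.

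A smaller methodological difference: you eliminate the $L^{n,p,q}$ contribution by induction on $q$, whereas the paper keeps it, sums in $(p,q)$ with the weights $t^p\nu^{q2^{-\aleph}}$, and obtains an extra term $M\sum_k C_{U,k}t^k\nu^{2^{-\aleph}k}G^n$ on the right, which is then absorbed into the left for $\nu$ small. Both routes are viable, but the paper's absorption is shorter and avoids iterating the $(p,q)$ constants.
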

 Define
 $$
G_{N}:=\sum_{n=0}^N\tau^{n-1}G^n=\sum_{n=1}^N\sum_{q=0}^M\sum_{p=0}^{2q} \tau^{n-1}t^p\nu^{q2^{-\Lambda}}G^{n,p,q} . $$
Suppose we can prove that there exists a constant $C_M$, continuous in $\nu$ and such that 
\begin{equation}\partial_{\tau}G_N\leq C_MG_N\l \partial_{z_1}+\partial_{z_2}\r G_N.
\end{equation}
From this, proceeding as in \cite{greniernguyen}, Section 6.3, we can then deduce that for $\tau$ small enough, we have $G_N\leq C_M$ for some $C_M>0$ independent from $N$, from which we conclude. Let $n\geq 1$. For each $p,q$ by \eqref{eq:recursfinal}, Proposition \ref{projbound} (in the case $n=1,\alpha=\alpha_0$) and Proposition \ref{genbound}  we have, for some constants $C_{U,k}>0$,
\begin{equation}\label{eq:gnpq}nG^{n,p,q}\leq (p+1)G^{n,p+1,q}+\sum_{1\leq j\leq n-1}\sum_{k=0}^q\sum_{l=0}^p G^{j,k,l}(\partial_{z_1}+\partial_{z_2})G^{n-j,q-k,p-l}+\sum_{k=1}^MC_{U,k} G^{n,p-k,q-2^{\Lambda-1}k}. \end{equation}
Notice that for $p=2q$ the term containing $G^{n,p+1,q}$ disappears. We claim the inequality
\begin{equation}\label{eq:gnpq2}G^{n,p,q}\leq \frac{C_p}{n} \sum_{p'=p}^{2q}\l \sum_{1\leq j\leq n-1}\sum_{k=0}^q \sum_{l=0}^{p'} G^{j,k,l}(\partial_{z_1}+\partial_{z_2})G^{n-j,q-k,p'-l}+\sum_{k=1}^M C_{U,k} G^{n,p'-k,q-2^{\Lambda-1}k}\r. \end{equation}
\begin{proof}
We proceed by reverse induction on $p$, starting from $p=2q$. In the base case $p=2q$, the term $G^{n,p+1,q}$ from \eqref{eq:gnpq} disappears, as we have set $\psi_{\alpha}^{n,p,q}=0$ for all $\alpha$ and for all $p>2q$ (see Section \ref{timedep}). Therefore, \eqref{eq:gnpq2} holds with $C_p=C_{2q}=1$.\\

Now, suppose \eqref{eq:gnpq2} holds for all $p'$ such that $p<p'\leq 2q$, where $p<2q$. Then
\begin{align*}
nG^{n,p,q}&\leq (p+1)C_{p+1}\sum_{p'=p+1}^{2q}\l \sum_{1\leq j\leq n-1}\sum_{k=0}^{q}\sum_{l=0}^{p'}G^{j,k,l}(\partial_{z_1}+\partial_{z_2})G^{n-j,q-k,p'-l}+\sum_{k=1}^M C_{U,k} G^{n,p'-k,q-2^{\Lambda-1}k}\r \\
&+\sum_{1\leq j\leq n-1}\sum_{k=0}^q\sum_{l=0}^pG^{j,k,l}(\partial_{z_1}+\partial_{z_2})G^{n-j,q-k,p-l}+\sum_{k=1}^MC_{U,k} G^{n,p-k,q-2^{\Lambda-1}k} \\
&\leq \l (p+1)C_{p+1}+1\r  \sum_{p'=p}^{2q}\l \sum_{1\leq j\leq n-1}\sum_{k=0}^q\sum_{l=0}^{p'}G^{j,l,l}(\partial_{z_1}+\partial_{z_2})G^{n-j,q-k,p'-l}+\sum_{k=1}^MC_{U,k} G^{n,p'-k,q-2^{\Lambda-1}k}\r.
\end{align*}
\end{proof}
We can now multiply both sides of \eqref{eq:gnpq2} by $t^p \nu^{q2^{-\Lambda}}$ and sum over $p=0$ to $p=2q$, and $q=0$ to $q=M$.  Note that, for any given positive real numbers $a_{p',q}$, we have
$$\sum_{p=0}^{2q}C_p\sum_{p'=p}^{2q}a_{p',q}=\sum_{p'=0}^{2q}a_{p',q}\sum_{p=0}^{p'}C_p\leq \l \sum_{p=0}^{2M}  C_p\r\l\sum_{p=0}^{2q}a_{p,q}\r.$$
Therefore, letting $C:=\sum_{p=0}^{2M} C_p$, we obtain for all $n\geq 2$,
\begin{align*}
    G^n&\leq \frac{C}{n}\sum_{j=1}^{n-1}\sum_{q=0}^M\sum_{p=0}^{2q}\sum_{l=0}^q\sum_{k=0}^p t^k \nu^{l2^{-\Lambda}}G^{j,k,l} t^{p-k}\nu^{(q-l)2^{-\Lambda}}(\partial_{z_1}+\partial_{z_2})G^{n-j,p-k,q-l}\\ &+C\sum_{q=0}^M\sum_{p=0}^{2q}\sum_{k=1}^M  t^k \nu^{k/2}C_{U,k}t^{p-k}\nu^{q2^{-\Lambda}-k/2}G^{n,p-k,q-2^{\Lambda-1}k}\\
    &\leq \frac{C}{n} \sum_{j=1}^{n-1}G^{j}(\partial_{z_1}+\partial_{z_2})G^{n-j}+ M\sum_{k=1}^M C_{U,k}t^k\nu^{2^{-\Lambda}k}\,G^n.
\end{align*}

For $\nu$ small enough the second sum above can be absorbed in the left-hand side, and the proof then carries on as in the time-independent case.

 In conclusion, we have that the series
 $$\sum_{n=0}^{\infty}\sum_{p,q=0}^M \tau^{n} t^p\nu^{q2^{-\Lambda}}G^{n,p,q} $$
 converges to some positive real number, and hence the solution defined by \eqref{eq:sol} converges in the space of smooth functions with finite generator functions. In particular, if $1/C_{\infty}$ is the radius of convergence of the series \eqref{eq:mainseries}, then  $G^n\leq 2C_{\infty}^n$. Hence the series \eqref{eq:sol} defines a solution $U_0$ to \eqref{eq:nsvort}, leaving an error defined by \eqref{eq:remainder}. We need to prove that the series defining $R$ also converges for $\tau$ small enough.    We have
 
\begin{align*}
    \gen_{\delta}\l \sum_{n=1}^{N}\tau^nR^n\r&\leq \sum_{n=1}^N \tau^n \sum_{1\leq j\leq n-1}\sum_{p_1,p_2\leq  2M}\sum_{q_1+q_2>M}^{q_1,q_2\leq M}(\nu^{2^{-\Lambda}(q_1+q_2)}t^{p_1+p_2})G^{j,p_1,q_1}G^{n-j,p_2,q_2}\\
    &+\sum_{n=\nu^{-1/4}}^N\tau^n \sum_{1\leq j\leq n-1}\sum_{p_1,p_2\leq  2M}\sum_{q_1,q_2\leq M}(\nu^{2^{-\Lambda}(q_1+q_2)}t^{p_1+p_2})G^{j,p_1,q_1}G^{n-j,p_2,q_2}\\
    &= \sum_{p_1,p_2\leq 2M}\sum_{q_1+q_2>M}^{q_1,q_2\leq M}(\nu^{2^{-\Lambda}(q_1+q_2)}t^{p_1+p_2})\sum_{j=1}^{N-1}\tau^jG^{j,p_1,q_1}\sum_{n=j+1}^N \tau^{n-j} G^{n-j,p_2,q_2}\\
    &+\sum_{p_1,p_2\leq 2M}\sum_{q_1,q_2\leq M}(\nu^{2^{-\Lambda}(q_1+q_2)}t^{p_1+p_2})\sum_{j=1}^{N-1}\tau^jG^{j,p_1,q_1}\sum_{n>j\land \nu^{-1/4}}^N \tau^{n-j} G^{n-j,p_2,q_2}\\ 
    &\leq\sum_{p_1,p_2\leq 2M}\sum_{q_1+q_2>M}^{q_1,q_2\leq M}(\nu^{2^{-\Lambda}(q_1+q_2)}t^{p_1+p_2}) G_{N}^{p_1,q_1}G_N^{p_2,q_2}\\
    &+\sum_{p_1\leq 2M}\sum_{q_1\leq M}\l \nu^{2^{-\Lambda}(q_1+q_2)}t^{p_1+p_2} \r G_N^{p_1,q_1}\tau^{\nu^{-1/4}} \sum_{n>\nu^{-1/4}}\sum_{p_2\leq 2M}\sum_{q_2\leq M}\tau^{n-\nu^{-1/4}} G^{n,p_2,q_2}\\ 
    &\leq C_M \l \nu^{2^{-\Lambda}}t^{2}\r^{q_1+q_2}+ C_M \tau^{\nu^{-1/4}}\\ 
    &\leq \tilde{C_M}\l \nu^{2^{-\Lambda}}\log \nu^{-2}\r^M+ C_M\tau^{\nu^{-1/4}}.
\end{align*}
Thus for $\tau$ small enough the series defining $R$ converges to a smooth function with $\gen_{\delta}R<\infty$. Moreover, for $M$ large enough, the quantity above is smaller than $C_{M,{N_s}}\nu^{N_s}$.
\subsection{Sobolev bounds} \label{sobolev}
So far, we have constructed a sequence of approximate solutions $\mbf{u}^{\nu}$ to \eqref{eq:ns2} with a small forcing $\mbf{f}^{\nu}$ and which are close to $U_0(y/\sqrt{\nu})$ at time $t=0$. Here, the 'size' is measured by the generator functions, which only control the $L^{\infty}$ norm. However, \autoref{main} requires these conditions in terms of the $H^s$ norms, where $s$ may be arbitrarily large.  Furthermore, when returning to our original variables by reverting the isotropic rescaling \eqref{eq:iso}, we have to keep in mind that the $H^s$ norms gain a factor of $\nu^{-s/2}$. In this section we will show how to control these norms.

Let $\alpha \in \mathbb{Z}$. Assume that $(c,\nu)$ is not an Orr-Sommerfeld eigenvalue, and let $G_{c,\nu}$ be the associated Green function of the $\orr_{c,\nu}$ operator. By Proposition A.1 from our paper \cite{prevpaper2}, we know that given $\psi \in C^{\infty}(\mathbb{R}_+)$ bounded with its derivatives, letting
$$ \phi(y):=\int_0^{\infty}G_{c,\nu}(x,y)\psi(x)\,\mathrm{d}x,$$
we have for all $k\in \mathbb{N}_{\geq 0}$ and for all $y\geq 0$,
\begin{equation}\label{eq:greenest0} |\phi^{(k)}(y)|\leq C_k\left[ \max_{0\leq j\leq k-4}|\psi^{(j)}(y)|+\max_{0\leq j\leq 3\land k}\left|\int_0^{\infty} \partial_y^j G_{c,\nu}(x,y)\psi(x)\,\mathrm{d}x\right|\right],
\end{equation}
where $C_k$ depends on the first $k-2\lor 0$ derivatives of $U_0$ and $\alpha$.

 Hence for all $k\in \mathbb{N}$,
 \begin{equation}\label{eq:greenest1}\left\|  \phi\right\|_{H^k}\leq \begin{cases} C_k\left( \|\psi\|_{H^{k-4}}+\|G_{c,\nu}\|_{H^3}\|\psi\|_{L^2}\right),& k\geq 4,\\ 
  \|G_{c,\nu}\|_{H^k}\|\psi\|_{L^2} & 0\leq k\leq 3.
 
 \end{cases}
 \end{equation}
Now let $c_{n,\alpha}$ be defined as in \eqref{eq:cdef}, with $|\alpha| \lesssim \nu^{-1/4}$ as per our usual assumption. For $k= 3$, by \eqref{eq:l1bound0} we have for all $n\geq 1$,
 $$\|G_{c_{n,\alpha},\nu}\|_{H^3(\mathbb{R}_+)}\leq \frac{C}{1+|\Im c_{n,\alpha}|}\l |\alpha|+ \mu \r \leq \frac{C'}{n} \nu^{-1/2},
$$
for some constants $C,C'>0$ independent from $n,\alpha,c,$ and $\nu$.

\begin{prop}\label{sobolevbounds} There exists a constant $C_k>0$ such that for all $n\in \mathbb{N},$ $n\geq 2$ we have
$$\|\mbf{u}^n\|_{H^k}+\|\omega^n\|_{H^k}\leq n^{\left\lfloor (k-3)/4\right\rfloor}\nu^{1/8-k/4-n/2}C_k^n. $$
 As a result, the series
 $\sum_{n=1}^{\infty}\tau^{n-1}\mbf{u}^n $
 converges in $H^k$ for all $\tau < \nu^{-1/2}C_k^{-1}$.
\end{prop}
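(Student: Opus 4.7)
The plan is to prove the bound by strong induction on $n$, with a nested induction on $k$ inside. Recall the recursive formula \eqref{eq:recursfinal}:
\[
i\alpha\,\orr_{c_{n,\alpha},\nu}(\psi_\alpha^{n,p,q}) + (p+1)\psi_\alpha^{n,p+1,q} = Q^{n,p,q} + L^{n,p,q}.
\]
Since $\psi_\alpha^{n,p,q}=0$ for $p>2q$, this can be solved for $\psi_\alpha^{n,p,q}$ by inverting $\orr_{c_{n,\alpha},\nu}$ whenever $(n,\alpha)\neq (1,\pm 1)$; the resulting forcing $F_\alpha^{n,p,q}$ is a finite linear combination of quadratic products $\mbf{u}_{\alpha'}^{j,\cdot,\cdot}\cdot\nabla\,\omega_{\alpha-\alpha'}^{n-j,\cdot,\cdot}$ with $1\le j\le n-1$, together with lower-order-in-$(p,q)$ contributions coming from $L^{n,p,q}$ and the $(p+1)\psi_\alpha^{n,p+1,q}$ term. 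Since $p,q$ range over a bounded set independent of $n$, and $|\alpha|\le n\wedge\nu^{-1/4}$, it suffices to bound each $\psi_\alpha^{n,p,q}$ in $H^k(\mathbb{R}_+)$ and then sum a finite number of terms.

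For the base case $k\in\{0,1,2,3\}$, I would apply \eqref{eq:greenest1} to obtain
\[
\|\psi_\alpha^{n,p,q}\|_{H^k}\le \|G_{c_{n,\alpha},\nu}\|_{H^k}\,\|F_\alpha^{n,p,q}\|_{L^2}\lesssim \frac{\nu^{-1/2}}{n}\,\|F_\alpha^{n,p,q}\|_{L^2},
\]
using the Green function bound recalled just above the statement. The $L^2$ norm of $F$ is controlled by a product estimate: $\|u^j\cdot \nabla \omega^{n-j}\|_{L^2}\lesssim \|u^j\|_{H^2}\|\omega^{n-j}\|_{H^1}$, via the 2D Sobolev embedding $H^2\hookrightarrow L^\infty$. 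Plugging in the inductive bound for $j, n-j<n$ and summing over $j\in\{1,\dots,n-1\}$ gives a combinatorial factor $\sim n$ that exactly cancels the $1/n$ from the Green function, yielding the stated bound with $n^{\lfloor(k-3)/4\rfloor}=1$.

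For $k\ge 4$, I would use the stronger form of \eqref{eq:greenest1},
\[
\|\psi_\alpha^{n,p,q}\|_{H^k}\le C_k\bigl(\|F_\alpha^{n,p,q}\|_{H^{k-4}}+\|G\|_{H^3}\|F_\alpha^{n,p,q}\|_{L^2}\bigr).
\]
The second term is handled exactly as in the $k=3$ case. For the first term, a tame Moser-type product estimate bounds $\|u^j\cdot \nabla \omega^{n-j}\|_{H^{k-4}}$ by products of $H^{k-3}$-norms of $u^j$ and $\omega^{n-j}$, to which the inner inductive hypothesis at regularity $k-4<k$ applies. Critically, this first term lacks the $1/n$ factor from the Green function, so the $n$ gained from $\sum_{j=1}^{n-1}$ survives; iterating this loss every time $k$ increments by $4$ produces the $n^{\lfloor(k-3)/4\rfloor}$ factor. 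As for the $\nu$ exponent, every invocation of $\orr^{-1}$ costs $\nu^{-1/2}$, compounding to $\nu^{-n/2}$ across the $n$ recursion levels, and the remaining $\nu^{1/8-k/4}$ records the additional $H^k$ cost at the final step.

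The main obstacle is the simultaneous bookkeeping of the $\nu$-exponent, the polynomial-in-$n$ prefactor, and the combinatorial sums: one must verify that the $1/n$ gain from $|\Im c_{n,\alpha}|\ge n\Im c_1$ in $\|G\|_{H^3}$ always cancels the combinatorial $n$ from summation over $j$, except on the $H^{k-4}$-branch, to the exact extent claimed by the $n^{\lfloor(k-3)/4\rfloor}$ factor. Once the pointwise bound on $\|\mbf{u}^n\|_{H^k}$ is established, the convergence of $\sum_{n\ge 1}\tau^{n-1}\mbf{u}^n$ in $H^k$ follows at once from the root test on the corresponding power series.
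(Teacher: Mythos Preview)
Your double-induction scaffolding (outer on $n$, inner on $k$, with the Green-function estimate \eqref{eq:greenest1} supplying the regularity gain) matches the paper's architecture. The point where you diverge is the product estimate for the forcing: you control $\|u^j\cdot\nabla\omega^{n-j}\|_{L^2}$ via the Sobolev embedding $H^2\hookrightarrow L^\infty$, i.e.\ by $\|u^j\|_{H^2}\|\omega^{n-j}\|_{H^1}$, feeding \emph{both} factors back into the inductive $H^k$ hypothesis. The paper instead uses the generator-function bounds already established in Proposition~\ref{genconv}: $\|\mbf{u}^j\|_{L^\infty}+\|\omega^j\|_{L^\infty}\leq G^j$, with $\sup_{n\geq 2}(G^n)^{1/(n-1)}$ bounded \emph{uniformly in $\nu$}. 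This allows an $L^\infty\times L^2$ splitting where only one factor carries the inductive $H^k$ (hence $\nu$-dependent) bound. Concretely, the paper packages everything into a single constant
\[
A_k:=C_k\max\Bigl\{2;\;(1+|\Im c_1|)\sup_n\|G_{c_{n,\alpha},\nu}\|_{H^3};\;\sup_{n\geq 2}(G^n)^{1/(n-1)}\Bigr\},\qquad B_k:=\tfrac{\|\mbf{u}^1\|_{H^{k+2}}+\|\omega^1\|_{H^{k+2}}}{1+C_{L^\infty\hookrightarrow H^2}},
\]
and proves $\|\mbf{u}^n\|_{H^k}+\|\omega^n\|_{H^k}\leq A_k^{n-1}B_kn^{\ell}$ directly; the stated exponent $\nu^{1/8-k/4-n/2}$ then drops out of $A_k\lesssim\nu^{-1/2}$, $B_k\lesssim\nu^{-3/8-k/4}$.

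The practical difference: in your scheme each bilinear step introduces \emph{two} copies of the $\nu^{1/8-k/4}$-type prefactor (one from $\|u^j\|_{H^2}$, one from $\|\omega^{n-j}\|_{H^1}$), whereas the paper's $L^\infty$ input keeps only one. Your route should still yield convergence of $\sum\tau^{n-1}\mbf{u}^n$---the root test only sees the geometric rate $\nu^{-1/2}C_k$---but recovering the precise exponent $\nu^{1/8-k/4-n/2}$ hinges on having exactly one such prefactor per step, which is what the $\nu$-uniform $L^\infty$ control from the generator functions buys. Without invoking Proposition~\ref{genconv} you are effectively re-deriving a weaker form of that control through Sobolev embedding, at the cost of extra powers of $\nu$.
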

\begin{proof}  
For all $n\geq 1$, let $G^n$ be defined as in Proposition \ref{genconv}, let $C_k$ be the constant defined in \eqref{eq:greenest0}, and for $k\geq 1$ define
$$A_k:=C_k\max\lll 2;(1+|\Im c_1|) \sup_{n\in \mathbb{N}}\|G_{c_{n,\alpha},\nu}\|_{H^3(\mathbb{R}_+)};\sup_{n\in \mathbb{N}_{\geq 2}}(G^n)^{1/(n-1)}\rrr,\qquad B_k:=\frac{\|\mbf{u}^1\|_{H^{k+2}}+\|\omega^1\|_{H^{k+2}}}{1+C_{L^{\infty}\hookrightarrow H^2}},$$
where $C_{L^{\infty}\hookrightarrow H^2}$ is the constant from the Sobolev embedding. Note that from the definition of $A_k$ and $B_k$ it follows that
 $$\|\mbf{u}^n\|_{L^{\infty}}\leq G^n \leq A_k^{n-1},\qquad \|\omega^n\|_{L^{\infty}}\leq  A_k^{n-1}B_k\leq n^{\left\lfloor (k-3)/4\right\rfloor}A_k^{n-1}B_k,\qquad \forall n,k\in\mathbb{N}.$$
Furthermore, we have
$$A_k \lesssim \nu^{-1/2},\qquad B_k \lesssim \nu^{-3/8-k/4}.$$
 
 We shall prove that for $\ell := \left \lfloor (k-3)/4\right\rfloor\geq 1$ we have $$\|\mbf{u}^n\|_{H^k}+\|\omega^n\|_{H^k}\leq A_k^{n-1} B_k n^{\ell}.$$ We run two inductions, first on $n\geq 1$ and then on $k\geq 0$. For $n=1$,  the claim follows from \eqref{eq:u1}. For $n>1$, let $G_{c_{n,\alpha},\nu}$ be the Green function of $\orr_{c_{n,\alpha},\nu}$, where $c_{n,\alpha}$ is defined by \eqref{eq:cdef}.

 Then from \eqref{eq:orreqn} and \eqref{eq:greenest1}, for $0\leq k\leq 3$ we have
 \begin{align*}
    \|\psi^n\|_{H^{k}}&\leq \frac{A_k}{n}\sum_{j=1}^{n-1}\l \|\mbf{u}^j\|_{L^{\infty}}\|\omega^{n-j}\|_{L^2}+\|\mbf{u}^j\|_{L^2}\|\omega^{n-j}\|_{L^{\infty}} \r\leq  \frac{A_k}{n}\sum_{j=1}^{n-1}A_k^{n-2}B_k\\
    &\leq A_k^{n-1}B_k.
 \end{align*}
Now we proceed by induction over $k\geq 4$. In this case $\left \lfloor (k-4-3)/4\right\rfloor=\ell -1\geq 0$, and so by \eqref{eq:greenest1} together with the case $k\leq 3$ and inductive assumption we have
\begin{align*}
    \|\psi^n\|_{H^k}&\leq \frac{A_k}{2}\sum_{j=1}^{n-1}\l \|\mbf{u}^j\|_{L^{\infty}}\|\omega^{n-j}\|_{H^{k-4}}+\|\mbf{u}^j\|_{H^{k-4}}\|\omega^{n-j}\|_{L^{\infty}} \r+ A_k^{n-1}B_k\\
    &\leq A_k^{n-1}B_k\l  \frac{1}{2}\sum_{j=1}^{n-1}\l (n-j)^{\ell-1}+ j^{\ell -1}\r +1\r\leq A_k^{n-1}B_k n^{\ell }.
\end{align*}
\end{proof}
Thus the solution $\mbf{u}^{\nu}$ we constructed in Section \ref{timedep} belongs to $H^s(\mathbb{T}\times \mathbb{R}_+)$, where $s\geq 0$ was fixed at the beginning of the Section, for all times $t\geq 0$ such that $\tau \leq \nu^{-1/2}C_s^{-1}$, i.e. for all $t \leq -(N_s+1/2)\sqrt{\nu}\frac{\log \nu-\log C_s}{\Im c_0}$.

As a result, so will the forcing $\mbf{f}=\nabla^{\perp}R$, where $R$ was defined in \eqref{eq:remainder}, which also shows that by taking $M$ large enough (depending on $s$) we have
$$\|\mbf{f}(t)\|_{H^s}\leq \nu^{N_s}\tau, $$
in the isotropically-rescaled variables, for $\nu>0$ small enough, and hence $\|\mbf{f}(t)\|_{H^s}\leq \nu^{N_0}$ for all $t \leq -N_s\sqrt{\nu}\frac{\log \nu}{\Im c_0}$ in the original variables. This validates the assumption \eqref{eq:forcings} from \autoref{main} .
\subsection{Conclusion}
Let us return to the original variables in which \autoref{main} is stated, by reverting the isotropic rescaling \eqref{eq:iso}. With a notation abuse, we still denote with $\mbf{u}^{\nu}$ the approximate solution constructed in Section \ref{timedep}, even if it is now expressed in terms of the original variables. Recall that for all $t\geq 0$, by the normalization condition \eqref{eq:normaliz} we have
$\|\mbf{u}^1(t)\|_{L^{\infty}}= 1$ for all $t\geq 0$,
where $\mbf{u}^1$ was introduced in \eqref{eq:u1}.
Thus, at $t=0$, since $\sum_{n=1}^{\infty}\tau^{n-1} \mbf{u}^n$ converges in $H^s$ we can approximate
$$\|\mbf{u}^{\nu}(0,x,y)-\mbf{U}_0(y/\sqrt{\nu})\|_{H^s}\leq \nu^{N_s}\|\psi^1\|_{H^{s+1}}\leq \nu^{N_s+7/8-3s/4}\leq  \nu^{N_0},$$
by definition of $N_s$. This establishes \eqref{eq:main0}.

It remains to determine the $L^{\infty}$ instability of $\mbf{u}^{\nu}$ at a time $\tilde{T}^{\nu}\to 0^+$ as $\nu\to 0^+$. Let $T_0>0$ and let $\tilde{T}^{\nu}:=\sqrt{\nu}\l -N_s\frac{\log \nu}{\Im c_{0}}-T_0\r$, so that $\tau(\tilde{T}^{\nu})=\nu^{N_s}e^{\Im c_{0} \tilde{T}^{\nu}/\sqrt{\nu}}=e^{-\Im c_{0} T_0}$. Since the series $\sum_{n=1}^{\infty}\tau^{n-1} \|{\mbf{u}}^n\|_{L^{\infty}}$ converges for $\tau < C_{\infty}$, we know that $\|\mbf{u}^n(t)\|_{L^{\infty}}\leq C:=\sup_{n\in \mathbb{N}}\|\mbf{u}^n\|_{L^{\infty}}<\infty$, for all $t\leq \tilde{T}^{\nu}$.  We can choose $T_0$ large enough so that $\tau^2/(1-\tau) \leq 1/(2C)$. Furthermore, for $\nu$ small enough, we will have $\tilde{T}^{\nu}>0$.\\

We then have
\begin{align*}\|\mbf{u}^{\nu}(\tilde{T}^{\nu})-U^{\nu}(\tilde{T}^{\nu})\|_{L^{\infty}}&\geq \tau(\tilde{T}^{\nu})\|\mbf{u}^1(\tilde{T}^{\nu})\|_{L^{\infty}}- \sum_{n=2}^{\infty}\tau^n (\tilde{T}^{\nu})\|\mbf{u}^n(\tilde{T}^{\nu})\|_{L^{\infty}}\\
&\geq e^{-\Im c_{0} T_0}\l 1-\sum_{n=2}^{\infty}\tau^{n-1}(\tilde{T}^{\nu})\|\mbf{u}^n(\tilde{T}^{\nu})\|_{L^{\infty}}\r \\
&\geq e^{-\Im c_0 T_0}\l 1- C\frac{\tau^2}{1-\tau}\r \\
&\geq \frac{1}{2}e^{-\Im c_{0} T_0}=:\sigma>0.
\end{align*}
 This completes the proof of \autoref{main}, where in the original variables the instability time is given by $\tilde{T}^{\nu}$.

\begin{appendix}

\section{Necessity of the assumption on the derivatives of the shear flow at the origin}
Let $U_0$ be the shear flow around which we have constructed the instability as per \autoref{main}.  Both in this paper and our previous paper, \cite{prevpaper}, we have made the rather strong assumption that all the derivatives of $U_0$ vanish at the origin: $U_0^{(k)}(0)=0$ for all $k\in\mathbb{N}$.

This assumption is necessary in order to ensure that the terms
${U^{\nu}(\sqrt{\nu}t,y)-U_0(y)}$, and thus the forcing $S\mbf{w}$ defined in \eqref{eq:sdef}, are bounded in $H^s$ (and vanish as $\nu \to 0^+$) for all $s\geq 0$. It corresponds to enforcing the compatibility condition of all orders for the corresponding heat equation \eqref{eq:shflows}. This is a sufficient and necessary for the solution to be regular up to the boundary, specifically up to the point $(t,y)=(0,0)$. In this case, the boundary condition and initial condition tell us that, for all $k\in \mathbb{N}$:
$$0=\left.\partial_t^k\l \partial_y U^{\nu}-\nu^{1/2-\gamma}U^{\nu}\r\right|_{(0,0)}=\left.\l\partial_y^{2k+1}U^{\nu}-\nu^{1/2-\gamma}\partial_y^{2k}U^{\nu}\r\right|_{(0,0)}=U_0^{(2k+1)}(0)-\nu^{1/2-\gamma}U_0^{(2k)}(0).$$
As this must be true for all $\nu>0$, we must require all derivatives of $U_0$ to vanish at $y=0$. In \cite{grenier3} and \cite{paddick}, the compatibility conditions do not involve the viscosity, so one only needs to require $U_0^{(2k)}(0)=0$ and $U_0^{(2k+1)}(0)=U_0^{(2k)}(0)$ respectively, for all $k\in\mathbb{N}$. These weaker assumptions have the advantage that they still admit analytic flows. For instance, $U_0(y)=ye^{-y^2}$ satisfies the no-slip compatibility conditions $U_0^{(2k)}(0)=0$ for all $k\in \mathbb{N}$, as well as the exponential decay property.

Let $\gamma>1/2$, and let $U_0$ be smooth and satisfying \eqref{eq:uscond}$_1$ as well as $U_0^{(2k)}(0)=0$ for all $k\in \mathbb{N}$, but $U_0'(0)\neq 0$ (e.g. $U_0(y)=ye^{-y^2}$). We can take the odd extension  of $U_0$ to $\mathbb{R}$, which will be smooth over $\mathbb{R}$ and which we will keep calling $U_0$. The solution $U^{\nu}$ to \eqref{eq:shflows} is given by
$$ U^{\nu}(t,y)=\int_{-\infty}^{+\infty}K(t,y-x)u_0^{\nu}(x)\,\mathrm{d}x,\qquad t\in [0,T],$$
where 
\begin{align*} u_0^{\nu}(y)&=U_0(y)+\chi_{(-\infty,0]}V^{\gamma}(y,\nu),\\
V^{\nu,\gamma}(y)&=2\int_0^{-y}e^{\nu^{1/2-\gamma}(x+y)}U_0'(x)\,\mathrm{d}x=2\int_{0}^{-y} e^{-\nu^{1/2-\gamma}x}U_0'(-x-y)\,\mathrm{d}x.\end{align*}
Note that $V^{\nu,\gamma}(y)$ decays rapidly as $y\to -\infty$, reflecting the rate of decay of $U_0'$. For instance, with $U_0(y)=ye^{-y^2}$ and $\gamma=1$ we can explicitly compute
$$V^{\nu,\gamma}(y)=\frac{1}{2\nu}\l -2e^{-y^2}\sqrt{\nu}\l -1+e^{y(y+\nu^{-1/2})}+2\sqrt{\nu}y\r
+\sqrt{\pi}e^{-\frac{1}{4\nu}+\frac{y}{\nu}}\l -\textup{Erf}\l \frac{1}{2\sqrt{\nu}}\r+\textup{Erf}\l \frac{1}{2\sqrt{\nu}}+y\r \r\r.$$
In general, for all $k\geq 1$ and $y\leq 0$, through integration by parts we obtain
\begin{equation}\label{eq:vvk}
    (V^{\nu,\gamma})^{(k)}(y)=2(-1)^k\int_0^{-y}e^{\nu^{1/2-\gamma}x}U_0^{(k+1)}(-x-y)\,\mathrm{d}x+2e^{\nu^{1/2-\gamma}y}\sum_{j=1}^k(-1)^{j}U_0^{(j)}(0)\nu^{(j-k)(\gamma-1/2)}.
\end{equation}
As a result, as $\nu \to 0^+$ we have
$$\|(V^{\nu,\gamma})^{(k)}\|_{L^2(-\infty,0)}\sim C_{U_0^{(k+1)}}\nu^{\gamma-1/2}+ 2\sum_{j=1}^k|U_0^{(j)}(0)|\nu^{(j-k+1/2)(\gamma-1/2)},$$
for some constants $C_{U_0^{(j)}}>0$, $j=1,\dots, k+1$ which only depend on $U_0^{(j)}$.

Therefore, for $k\geq 2$ we have
$$\lim_{\nu \to 0^+}\|(V^{\nu,\gamma})^{(k)}\|_{L^2(-\infty,0)}=0 \iff U_0^{(j)}(0)=0\;\forall j=1,\dots,k-1\iff (V^{\nu,\gamma})^{(k)}(0)=2(-1)^{k}U_0^{(k)}(0)\;\forall \nu>0.$$
Moreover, if the above condition does not hold for some $k\geq 2$, then $\lim_{\nu \to 0^+}\|(V^{\nu,\gamma})^{(k)}\|_{L^2}=\infty.$ 
On the other hand, we always have $\lim_{\nu \to 0^+}\|V^{\nu,\gamma}\|_{L^2}=0$ and $\lim_{\nu \to 0^+}\|(V^{\nu,\gamma})'\|_{L^2}=0,$ as well as $V^{\nu,\gamma}(0)=0$, $(V^{\nu,\gamma})'(0)=2U_0'(0)$. In our case, since $U_0'(0)\neq 0$, we conclude that $\lim_{\nu \to 0^+}\|(V^{\nu,\gamma})^{(k)}\|_{L^2}=\infty$ if and only if $k\geq 2$.

Now let $U^{0}$ be the solution to the heat equation with no-slip boundary condition and initial condition $U_0$. Thanks to the compatibility condition $U_0^{(2k)}(0)=0$ for all $k\in \mathbb{N}$, we know that
$$\lim_{\nu\to 0^+} \|U^{0}(t)-U_0\|_{L^{\infty}(0,T;H^s)}=0,\qquad \forall s\geq 0,t\geq 0.$$ 
Therefore, in order to study the behavior of $U^{\nu}-U_0$ as $\nu \to 0^+$ it is equivalent to consider
$$ \lim_{\nu\to 0^+}\|U^{\nu}(\sqrt{\nu}t,y)-U^{0}(\sqrt{\nu}t,y)\|_{L^{\infty}(0,T;H^s)}.$$
We have
$$U^{\nu}(\sqrt{\nu}t,y)-U^0(y,\sqrt{\nu} t)=K(\sqrt{\nu}t,y)\star_y (u_0^{\nu}(y)-U_0(y))=\int_{-\infty}^0K(\sqrt{\nu}t,y-x)V^{\nu,\gamma}(x)\,\mathrm{d}x.$$
Thus, for $k=0$ and $k=1$ we have
\begin{align*}\lim_{\nu \to 0^+}\|\partial_y^kU^{\nu}(\sqrt{\nu}t,y)-\partial_y^kU^0(\sqrt{\nu}t,y)\|_{L^2(0,\infty)}\leq \lim_{\nu \to 0^+}\|(V^{\nu,\gamma})^{(k)}\|_{L^2}= 0.\end{align*}
However, if we take the second derivative, then
\begin{equation}\label{eq:d2}
    \partial_y^2 (U^{\nu}-U^0)(\sqrt{\nu}t,y)=\int_{-\infty}^0K(\sqrt{\nu}t,y-x)(V^{\nu,\gamma})''(x)\,\mathrm{d}x-K(\sqrt{\nu}t,y)(V^{\nu,\gamma})'(0).
\end{equation}
In the above relation, the first term may be seen as the solution to the heat equation over $\mathbb{R}$ with initial condition $(V^{\nu,\gamma})''\chi_{(-\infty,0]}.$  As $\nu\to 0^+$ we have
\begin{align*}
    \|\partial_y^2 (U^{\nu}-U^0)(\sqrt{\nu}t,y)\|_{L^2(0,\infty)}\geq \left|\frac{1}{(8\pi \sqrt{\nu}t)^{1/4}}|U_0'(0)|-\|(V^{\nu,\gamma})''\|_{L^2}-o(1)\right|\geq \left| \frac{\nu^{1/8}}{t^{1/4}}-1-o(1)\right|\frac{2}{\nu^{1/4}}.
\end{align*}
This shows that the difference between the second derivatives of the slip and the no-slip solutions is unbounded in the energy norm as $\nu\to 0^+$ for any time $t>0$. Since the latter converges to the second derivative of the initial condition $U_0''$, we deduce that
$$\lim_{\nu \to 0^+}\|\partial_y^2 U^{\nu}(\sqrt{\nu}t,y)-U_0''(y)\|_{L^2(0,\infty)}=\infty.$$

More generally, if the compatibility conditions are not satisfied for some $k\in \mathbb{N}$, then the $(k+1)$-th spatial derivative of $U^{\nu}(t,y)$ (and thus of $U^{\nu}(t,y)-U_0(y)$) will be discontinuous at $(t,y)=(0,0)$. In fact, the energy of $\partial_yU^{\nu}-U_0'$, which is zero at $t=0$, will shoot up instantly to accomodate for the gap between the boundary and initial conditions. Therefore, $\lim_{\nu \to 0^+}\|U^{\nu}(\sqrt{\nu}t,y)-U_0(y)\|_{H^k}\neq 0$. This shows that our assumption that $U_0^{(k)}(0)=0$ for all $k\in \mathbb{N}$ is necessary if we want to follow the approximate construction used in \cite{greniernguyen}. However, it does not imply that it is a necessary assumption for the statement contained in \autoref{main}, as a different strategy for the proof could still eliminate this assumption.

\subsection{Numerical simulation}
Let $w^{\nu}(t,y):=U^{\nu}(\sqrt{\nu}t,y)-U_0(y)$. Then $w^{\nu}(t,y)$ satisfies the equations
\begin{equation}
    \begin{cases}\partial_t w^{\nu} = \partial_{yy}w^{\nu}+U_0'';\\
    w^{\nu}-\nu^{\gamma-1/2}\partial_yw^{\nu}+ U_0(0)-\nu^{\gamma-1/2}U_0'(0)=0 & y=0;\\
    w^{\nu}=0 & t=0.   
    \end{cases}
\end{equation}
Choosing $\gamma=1$, which is the physically relevant case, we find an approximate solution to the above problem by truncating the domain to $(t,x)\in [0,1]\times [0,5]$,  using MATLAB's pdepe solver. At $x=5$ we require a homogeneous Dirichlet boundary condition. We approximate the domain using quadratically spaced arrays to better capture the behavior near $(t,x)=(0,0)$. In the figures below, we plot the $L^2$ norms of derivatives of $w^{\nu}$ as a function of $t$ and $\sqrt{\nu}$.
\begin{figure}[h]
\centering
\caption{Case of $U_0(y)=ye^{-y^2}$.}
\begin{subfigure}{.5\textwidth}
\includegraphics[scale=0.6]{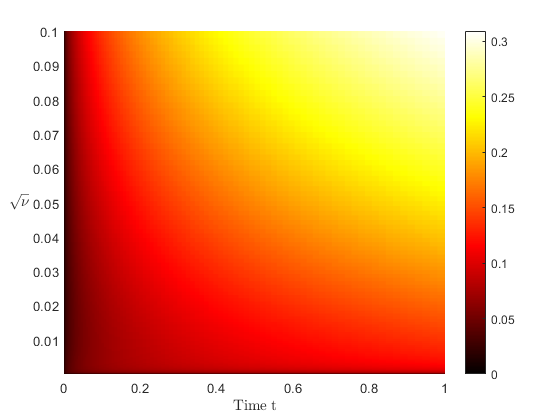}
\caption{$L^2$ norm of $\partial_y w^{\nu}(t,\cdot)$}
\label{fig:sub1}
\end{subfigure}%
\begin{subfigure}{.5\textwidth}
\includegraphics[scale=0.6]{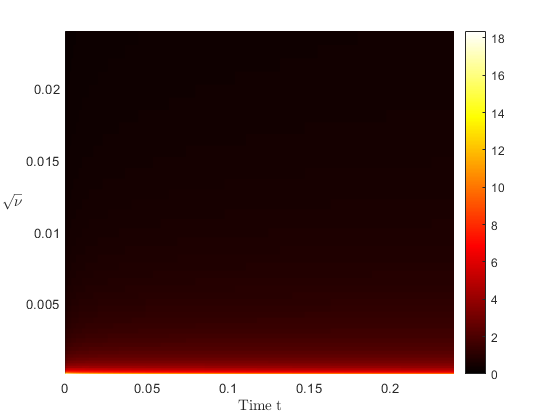}
\caption{$L^2$ norm of $\partial_y^2 w^{\nu}(t,\cdot)$}
\label{fig:sub2}
\end{subfigure}
\label{fig:1norm}
\end{figure}
In Figure \ref{fig:sub1}, taking $U_0(y)=ye^{-y^2}$, we can see that the $L^2$ norm of $\partial_y w$ converges to zero as $\nu \to 0$, regardless of the value of $t>0$. 
\begin{figure}[h]
\centering\caption{Case of $U_0(y)=e^{-1/y}$: $L^2$ norm of $\partial_y^{10} w^{\nu}(t,\cdot)$.}
\label{fig:2norm10}

\includegraphics[scale=0.6]{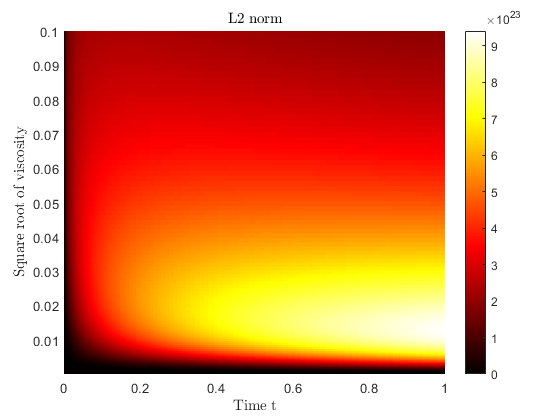}
\end{figure}
However, when we consider the second order derivative in  Figure \ref{fig:sub2}, we notice that even for small times $t$, convergence does not occur. This matches with our theoretical analysis, given that $U_0(y)=e^{-y^2}$ only satisfies the compatibility condition at even orders. Finally, if we switch to the flow $U_0(y)=e^{-1/y}$, which satisfies the compatibility conditions at all orders, then even for the $10$-th order derivative we can still visualize the convergence as in Figure \ref{fig:2norm10}.

\end{appendix}
\clearpage

\end{document}